\documentclass[nointlimits]{amsart}

\usepackage[T1]{fontenc} % alphabets to prepare
\usepackage[utf8]{inputenc} % input encoding
\usepackage[USenglish]{babel} % so other packages can pickup its setting

\usepackage{xparse}
\usepackage{ifthen}

\usepackage{lmodern}
\usepackage{mathrsfs}
\usepackage[nopatch=footnote]{microtype}

\usepackage{url}%aby se správně zalamovalo doi na konci řádků, viz dokumentace natbib
\usepackage{natbib}

\usepackage{amsmath, amssymb, amsthm, enumerate}

\usepackage{xcolor}

%Abych mohl v správně pomocí cref referencovat části theorem či remark. Viz https://tex.stackexchange.com/questions/499815/reference-to-list-items-in-theorem-environment
\usepackage{enumitem}
%\usepackage{hyphenat}
%\usepackage[normalem]{ulem} %https://tex.stackexchange.com/questions/159995/to-have-underline-in-two-lines

%\usepackage{fixme}
%\fxsetup{status=draft} % <====== otherwise it is in the final mode
%\fxsetup{theme=color}
%\definecolor{fxtarget}{rgb}{0.8000,0.0000,0.0000}

%hyperref by měl být poslední, ale cleverref ještě po něm, pokud je použit
\usepackage{hyperref}
\newcommand{\TITLE}{Compact Sobolev embeddings of radially symmetric functions}
\hypersetup{
	unicode=true,
	breaklinks=true,
	pdftitle={\TITLE},
	pdfauthor={Zdeněk Mihula},
	pdfsubject={We provide a complete characterization of compactness of Sobolev embeddings of radially symmetric functions on the entire space in the general framework of rearrangement-invariant function spaces. We avoid any unnecessary restrictions and cover also embeddings of higher order, providing a complete picture within this framework. To achieve this, we need to develop new techniques because the usual techniques used in the study of compactness of Sobolev embeddings in the general framework of rearrangement-invariant function spaces are limited to domains of finite measure, which is essential for them to work. Furthermore, we also study certain weighted Sobolev embeddings of radially symmetric functions on balls, where the weight is a nonnegative power of the distance from the origin. We completely characterize their compactness and also describe optimal target rearrangement-invariant function spaces in these weighted Sobolev embeddings.}
	}

%%%% PROSTREDI %%%%	
\numberwithin{equation}{section}

\theoremstyle{plain}
\newtheorem{theorem}{Theorem}[section]
\newtheorem{corollary}{Corollary}[section]
\newtheorem{proposition}{Proposition}[section]

\theoremstyle{definition}
\newtheorem{remark}{Remark}[section]

%%%%%%%
% KVULI PROBLEMU, KTERY SE OBJEVIL PO UPDATU MIKTEXU 8.11.2024
% PROSTREDI BYLO POTREBA PRESUNOUT UZ PŘED ZAVEDENI CREF, ABY SE SOUBORU ZKOMPILOVAL.
% PAK ALE CREF ZASE ODKAZOVAL VSECHNY PROSTREDI JAKO THEOREM, PROTOZE SDILELI STEJNY COUNTER
% NASLEDUJICI UDELA, ZE SICE JSOU TO VSECHNO SAMOSTATNE COUNTERY, ALE SDILI STEJNY INTERNI REGISTR
% VIZ https://tex.stackexchange.com/questions/425983/merging-unifying-two-counters
%%%%%%%
\makeatletter
\let\c@corollary=\c@theorem
\let\c@proposition=\c@theorem
\let\c@remark=\c@theorem
\makeatother

%%%%%%%%%%%%%%%%%%%%%%%%%%%%%%%%%%%%%%%%%%%%%%%%%%%%%%%%%%%%%%%%%%%%%%%%%%%%%%%%%%%%%%%%%%%%
%%%%%%%%%%%%%DEFINICE \WIDEBAR z https://tex.stackexchange.com/a/60253%%%%%%%%%%%%%%%%%%%%%%
%%%%%%%%%%%%%%%%%%%%%%%%%%%%%%%%amsmath required%%%%%%%%%%%%%%%%%%%%%%%%%%%%%%%%%%%%%%%%%%%%
%%%%%%%%%%%%%%%%%%%%%%%%%%%%%%%%%%%%%%%%%%%%%%%%%%%%%%%%%%%%%%%%%%%%%%%%%%%%%%%%%%%%%%%%%%%%
\makeatletter
\let\save@mathaccent\mathaccent
\newcommand*\if@single[3]{%
  \setbox0\hbox{${\mathaccent"0362{#1}}^H$}%
  \setbox2\hbox{${\mathaccent"0362{\kern0pt#1}}^H$}%
  \ifdim\ht0=\ht2 #3\else #2\fi
  }
%The bar will be moved to the right by a half of \macc@kerna, which is computed by amsmath:
\newcommand*\rel@kern[1]{\kern#1\dimexpr\macc@kerna}
%If there's a superscript following the bar, then no negative kern may follow the bar;
%an additional {} makes sure that the superscript is high enough in this case:
\newcommand*\widebar[1]{\@ifnextchar^{{\wide@bar{#1}{0}}}{\wide@bar{#1}{1}}}
%Use a separate algorithm for single symbols:
\newcommand*\wide@bar[2]{\if@single{#1}{\wide@bar@{#1}{#2}{1}}{\wide@bar@{#1}{#2}{2}}}
\newcommand*\wide@bar@[3]{%
  \begingroup
  \def\mathaccent##1##2{%
%Enable nesting of accents:
    \let\mathaccent\save@mathaccent
%If there's more than a single symbol, use the first character instead (see below):
    \if#32 \let\macc@nucleus\first@char \fi
%Determine the italic correction:
    \setbox\z@\hbox{$\macc@style{\macc@nucleus}_{}$}%
    \setbox\tw@\hbox{$\macc@style{\macc@nucleus}{}_{}$}%
    \dimen@\wd\tw@
    \advance\dimen@-\wd\z@
%Now \dimen@ is the italic correction of the symbol.
    \divide\dimen@ 3
    \@tempdima\wd\tw@
    \advance\@tempdima-\scriptspace
%Now \@tempdima is the width of the symbol.
    \divide\@tempdima 10
    \advance\dimen@-\@tempdima
%Now \dimen@ = (italic correction / 3) - (Breite / 10)
    \ifdim\dimen@>\z@ \dimen@0pt\fi
%The bar will be shortened in the case \dimen@<0 !
    \rel@kern{0.6}\kern-\dimen@
    \if#31
      \overline{\rel@kern{-0.6}\kern\dimen@\macc@nucleus\rel@kern{0.4}\kern\dimen@}%
      \advance\dimen@0.4\dimexpr\macc@kerna
%Place the combined final kern (-\dimen@) if it is >0 or if a superscript follows:
      \let\final@kern#2%
      \ifdim\dimen@<\z@ \let\final@kern1\fi
      \if\final@kern1 \kern-\dimen@\fi
    \else
      \overline{\rel@kern{-0.6}\kern\dimen@#1}%
    \fi
  }%
  \macc@depth\@ne
  \let\math@bgroup\@empty \let\math@egroup\macc@set@skewchar
  \mathsurround\z@ \frozen@everymath{\mathgroup\macc@group\relax}%
  \macc@set@skewchar\relax
  \let\mathaccentV\macc@nested@a
%The following initialises \macc@kerna and calls \mathaccent:
  \if#31
    \macc@nested@a\relax111{#1}%
  \else
%If the argument consists of more than one symbol, and if the first token is
%a letter, use that letter for the computations:
    \def\gobble@till@marker##1\endmarker{}%
    \futurelet\first@char\gobble@till@marker#1\endmarker
    \ifcat\noexpand\first@char A\else
      \def\first@char{}%
    \fi
    \macc@nested@a\relax111{\first@char}%
  \fi
  \endgroup
}
\makeatother
%%%%%%%%%%%%%%%%%%%%%%%%%%%%%%%%%%%%%%%%%%%%%%%%%%%%%%%%%%%%%%%%%%%%%%%%%%%%%%%%%%%%%%%%%%%%
%%%%%%%%%%%%%%%%%%%%%%%%%%%%%%%%%%%%%%%%%%%%%%%%%%%%%%%%%%%%%%%%%%%%%%%%%%%%%%%%%%%%%%%%%%%%
%%%%%%%%%%%%%%%%%%%%%%%%%%%%%%%%%%%%%%%%%%%%%%%%%%%%%%%%%%%%%%%%%%%%%%%%%%%%%%%%%%%%%%%%%%%%

\newcommand{\riSp}[1]{\ifthenelse{\equal{#1}{W}}{rearrangement-invariant function space}{\ifthenelse{\equal{#1}{w}}{rearrangement-invariant space}{r.i.\ space}}}
\newcommand{\riSps}[1]{\ifthenelse{\equal{#1}{W}}{rearrangement-invariant function spaces}{\ifthenelse{\equal{#1}{w}}{rearrangement-invariant spaces}{r.i.\ spaces}}}
\newcommand{\RiSps}[1]{\ifthenelse{\equal{#1}{W}}{Rearrangement-invariant function spaces}{\ifthenelse{\equal{#1}{w}}{Rearrangement-invariant spaces}{R.i.\ spaces}}}
\newcommand{\riNorm}[1]{\ifthenelse{\equal{#1}{w}}{rearrangement-invariant function norm}{r.i.\ function norm}}
\newcommand{\riNorms}[1]{\ifthenelse{\equal{#1}{w}}{rearrangement-invariant function norms}{r.i.\ function norms}}
\newcommand{\R}{\mathbb{R}}
\newcommand{\rn}{\R^n}
\newcommand{\N}{\mathbb{N}}
\newcommand{\M}{\mathscr{M}}
\newcommand{\Mpl}{\M^+}
\newcommand{\logsLZ}[1]{\mathbb{#1}}
\newcommand{\A}{\logsLZ{A}}
\newcommand{\B}{\logsLZ{B}}
\NewDocumentCommand{\LZ}{ O{(0, \infty)} m m m }{L^{{#2}, {#3}, {#4}}{#1}}

\newcommand{\ac}{\overset{*}{\hookrightarrow}}

%NUTNE JE xparse
\NewDocumentCommand{\aldx}{ O{\mu} O{\alpha} }{{#1}_{#2}}
\NewDocumentCommand{\alBvol}{ O{\mu} O{\alpha} O{n} }{{#1}_{{#2}, {#3}}}
\newcommand{\omN}{\omega_n}

\NewDocumentCommand{\wmrXB}{ O{m} O{X} O{R} }{W^{#1}_R{#2}(B_{#3})}
\NewDocumentCommand{\wmrXBvan}{ O{m} O{X} O{R} }{W^{#1}_{R,0}{#2}(B_{#3})}
\NewDocumentCommand{\wmrX}{ O{m} O{X} }{W^{#1}_R{#2}(\rn)}

\NewDocumentCommand{\aclocAB}{ O{0} m }{AC_{\text{loc}}(({#1},{#2}])}
\NewDocumentCommand{\aclocInt}{ m }{AC_{\text{loc}}({#1})}

\newcommand{\fundX}[1][X]{\varphi_{#1}}
\newcommand{\fundXrep}[1][X]{\varphi_{\widebar{#1}}}
\newcommand{\fundXAsocrep}[1][X]{\varphi_{\widebar{#1}'}}
\newcommand{\rep}[2][0, \infty]{\widebar{{#2}}({#1})}
\newcommand{\repFin}[2][0, 1]{\widebar{{#2}}({#1})}
\newcommand{\repAsoc}[2][0, \infty]{\widebar{{#2}}'({#1})}
\newcommand{\repAsocFin}[2][0, 1]{\widebar{{#2}}'({#1})}

\newcommand{\RR}{R}

\newcommand{\C}{\mathcal{C}}

\newcommand*\dd{\mathop{}\!\mathrm{d}}

\DeclareMathOperator{\spt}{supp}

\DeclareMathOperator*{\esssup}{ess\,sup}

%%%%%%%%%%%%%%%%%%% CREF JAKO POSLEDNI VEC %%%%%%%%%%%%%%%%%%%%%%%%%%%%%%%	

\iffalse%refcheck nefunguje spravne s cleverref. Pro refcheck přepsat na \iftrue. https://tex.stackexchange.com/questions/87610/making-refcheck-work-with-cleveref
    \usepackage{refcheck}
    \def\crefname#1#2#3{}
    \def\cref#1{\{\getrefs#1,\relax\}}
    \def\getrefs#1,#2\relax{%
        \ref{#1}%
        \ifx\relax#2\relax\else
            , \getrefs#2\relax
        \fi
    }
\else
\usepackage[capitalise]{cleveref} %měl by být načten jako poslední
 %Oxford comma, viz https://tex.stackexchange.com/questions/161338/can-cleveref-be-made-to-use-the-oxford-comma-for-multiple-citations
\fi

\title{\TITLE}
\author{Zden\v ek Mihula}
%\date{\today}
\address{Zden\v ek Mihula, Czech Technical University in Prague, Faculty of Electrical Engineering, Department of Mathematics, Technick\'a~2, 166~27 Praha~6, Czech Republic}
\email{mihulzde@fel.cvut.cz}
\urladdr{\href{https://orcid.org/0000-0001-6962-7635}{0000-0001-6962-7635}}

\begin{document}
\setcitestyle{numbers,square,aysep={,},notesep={, },citesep={,}}
%\setcitestyle{numbers}
%\bibliographystyle{Annali_SNS}

\subjclass[2020]{46E35, 46E30}
\keywords{Sobolev spaces, Sobolev embeddings, compactness, rearrangement-invariant spaces, radial symmetry, optimal spaces}
\thanks{This research was partly supported by grant no.~23-04720S of the Czech Science Foundation}

\begin{abstract}
We provide a complete characterization of compactness of Sobolev embeddings of radially symmetric functions on the entire space $\rn$ in the general framework of rearrangement-invariant function spaces. We avoid any unnecessary restrictions and cover also embeddings of higher order, providing a complete picture within this framework. To achieve this, we need to develop new techniques because the usual techniques used in the study of compactness of Sobolev embeddings in the general framework of rearrangement-invariant function spaces are limited to domains of finite measure, which is essential for them to work. Furthermore, we also study certain weighted Sobolev embeddings of radially symmetric functions on balls, where the weight is a nonnegative power of the distance from the origin. We completely characterize their compactness and also describe optimal target rearrangement-invariant function spaces in these weighted Sobolev embeddings.
\end{abstract}

\maketitle

%%%%%%%%%%%%%%%%%%%%%%%%%%%%%%%%%%%%%%%%%%%%%%%%%%%%%%%%%%%%%%%%%%%%%%%%%%%%%%%%%%%%%%%%%%%%%%%%%%%%%%%%%%%%%%%%%%%%

\section{Introduction}
It is notoriously known that Sobolev embeddings on unbounded domains are often noncompact (see~\cite[Chapter~6]{AFbook} for a nice introduction to this problem). This hindrance often makes analysis of partial differential equations or variational problems on unbounded domains very difficult. In particular, while the usual Sobolev space $W^{1,p}(\rn)$ is (continuously) embedded into the Lebesgue space $L^q(\rn)$ for every $q\in[p, np/(n-p)]$ when $p\in[1, n)$; and for every $q\in[p, \infty)$ when $p\geq n$, the Sobolev embedding
\begin{equation}\label{intro:Sobolev_emb_Lp_Lq_unrestricted}
W^{1,p}(\rn) \hookrightarrow L^q(\rn)
\end{equation}
is never compact. There are different phenomena causing this, depending on the values of $p$ and $q$, but one is common and always presented. Since both the Sobolev and the $L^q$ norms are translation invariant, by considering shifted copies of a nontrivial function $u\in W^{1,p}(\rn)$, we immediately see that the embedding cannot be compact. Loosely speaking, the mass can escape to infinity.

This obstacle was in particular faced by Strauss in his seminal paper \cite{S:77}, where he proved the existence of so-called solitary waves in the nonlinear Klein--Gordon equation in higher dimensions. A key step in his argument was to establish a certain compactness result for the Sobolev embedding \eqref{intro:Sobolev_emb_Lp_Lq_unrestricted} with $p=2$ but with $W^{1,p}(\rn)$ replaced by its subspace $W_R^{1,p}(\rn)$ consisting of functions with radial symmetry (cf.~\cite{BL:80, BL:83, CGM:78, EL:83}). More specifically, his argument yields that the Sobolev embedding
\begin{equation}\label{intro:Sobolev_emb_Lp_Lq_restricted}
W_R^{1,p}(\rn) \hookrightarrow L^q(\rn)
\end{equation}
with $p = 2$ is compact for every $q\in(p, np/(n-p))$, with $np/(n-p)$ to be interpreted as $\infty$ when $p\geq n$. This was later extended, among other things, to all values of $p\in[1, \infty)$ in \cite{L:82}. Before we proceed, let us briefly turn our attention to the restriction $q\in(p, np/(n-p))$. While the restriction $q \neq np/(n-p)$ is probably not surprising and is due to the usual concentration phenomenon, the restriction $q \neq p$ might be slightly surprising for some readers, who are not so familiar with Sobolev embeddings on the entire space. It is because of the vanishing phenomenon, which is in a sense an opposite phenomenon to the concentration one (see~\cite{L:84, L:85, S:95} and references therein for more information).

To establish the compactness of \eqref{intro:Sobolev_emb_Lp_Lq_restricted}, both papers \cite{S:77} and \cite{L:82} rely on a suitable ``radial lemma''. Its more general version from the latter paper reads as
\begin{equation}\label{intro:Lions_radial_lemma}
	|u(x)| \leq C_{n,p} \|u\|_{L^p(\rn)}^\frac{p-1}{p} \|\nabla u\|_{L^p(\rn)}^\frac1{p} |x|^{-\frac{n-1}{p}} \quad \text{for a.e.\ $x\in\rn$}
\end{equation}
and for every $u\in W_R^{1,p}(\rn)$, where $C_{n,p}$ is a positive constant, depending on $n$ and $p$, independent of $u$. In particular, this estimate, which does not in general hold for a function from $W^{1,p}(\rn)$, shows that the symmetry prevents the mass from escaping. The rest of the argument goes as follows. The radial lemma is exploited to establish that
\begin{equation}\label{intro:Lions_radial_lemma_norm_estimate}
\|u\|_{L^q(\rn\setminus B_R)} \leq \frac{C}{R^{(n-1)(1/p - 1/q)}} \|u\|_{W^{1,p}(\rn)} \quad \text{for every $u\in W_R^{1,p}(\rn)$}
\end{equation}
and for every $R>0$, where $B_R$ is the ball with radius $R$ centered at the origin. Note that the exponent $(n-1)(1/p - 1/q)$ is positive when $p < q \leq \infty$\textemdash here and in the rest, we implictly assume that $n\geq2$. Hence, given a bounded set $M\subseteq W_R^{1,p}(\rn)$, we see that
\begin{equation}\label{intro:uniform_decay_at_infinity_Lq_norms}
\lim_{R\to \infty} \sup_{u\in M} \|u\|_{L^q(\rn\setminus B_R)} = 0,
\end{equation}
provided that $p<q\leq\infty$. For $p\in[1, n]$, the compactness of \eqref{intro:Sobolev_emb_Lp_Lq_restricted} is then established by combining \eqref{intro:uniform_decay_at_infinity_Lq_norms} with the classical result that the Sobolev embedding $W^{1,p}(B_R) \hookrightarrow L^q(B_R)$ is compact if (and only if) $q\in[1, np/(n-p))$\textemdash this is where the restriction $q < np/(n-p)$ comes in (when $p>n$, the argument is the same, but we may also include $q=\infty$). Since the radial symmetry of functions appears (or ``can be forced'') in a surprisingly large number of problems and various compactness arguments are essential tools (it is virtually impossible to provide an exhaustive list of references, but the interested reader is referred to, e.g., \cite{A:24a, A:24b, D_FY:98, GdFMdSM:11, MV_S:15} and references therein), it is of interest to have at our disposal sharp compactness results for Sobolev spaces of radially symmetric functions. We note that compact embeddings of radially symmetric functions in function spaces measuring (integer or fractional) smoothness have been intensively studied also from quantitative points of view. The quality of compactness can be described by the rate of decay of various quantities, such as the approximation or entropy numbers. The interested reader is referred to \cite{ET:96, KLSS:03} and references therein for more information on this direction of research.

The principle result of this paper is a complete characterization of compactness of the Sobolev embedding
\begin{equation}\label{intro:Sobolev_emb_restricted}
\wmrX\hookrightarrow Y(\rn),
\end{equation}
where $X(\rn)$ and $Y(\rn)$ are \riSps{W} and $\wmrX$ is a Sobolev space of $m$th order, $m\in\N$, built upon $X(\rn)$ (see~\cref{sec:prel} for precise definitions). \RiSps{w} constitute a large class of function spaces whose norms are invariant with respect to symmetrizations preserving the measure of level sets of functions. Well-known examples of \riSps{w} are Lebesgue spaces, Lorentz spaces, or Orlicz spaces. In fact, this class of function spaces is rather wide and, besides these well-known types of function spaces, contains also more refine function spaces that may come handy when one deals with in a sense limiting situations. For example, it contains function spaces, sometimes called spaces of Brezis-Wainger-type, which appear in the study of sharp integrability properties of Sobolev functions in limiting Sobolev embeddings (\cite{BW:80, H:79}, see also~\cite{M:73}) and which improve limiting Sobolev embeddings into exponential Orlicz spaces (\cite{S:72, T:67}, see also~\cite{P:65,Y:61}).

It should be pointed out that our characterization of compactness of the Sobolev embedding \eqref{intro:Sobolev_emb_restricted}, see~\cref{thm:characterization_of_comp_emb_Y} below, does not impose any restrictions on the spaces $X$ or $Y$, apart from their rearrangement invariance. Therefore, it provides us with a complete picture within this framework. Moreover, it also covers embeddings of an arbitrary integer order. However, to obtain a complete picture, it is not sufficient to merely generalize the pointwise estimate~\eqref{intro:Lions_radial_lemma}. Instead, we need to develop new techniques and use subtler arguments to obtain sharp results. 

There are at least two reasons why a pointwise radial lemma is not suitable for our purposes. First, when dealing with a general function norm, there is no ``exponent'' inherently associated with it and ``playing with powers'' is not at our disposal. In fact, it is not clear in the first place what a suitable generalization of \eqref{intro:Lions_radial_lemma} should look like in this general setting. Second, the estimate is pointwise. Now, while pointwise estimates are often useful (as~\eqref{intro:Lions_radial_lemma} undoubtedly has been) and usually pleasant to work with, they may be too rough when one strives for sharp, optimal results even in a sense limiting situations in problems involving norms. Since establishing compactness means establishing norm convergence, we focus on establishing suitable norm inequalities from the very beginning, mixing new ideas with the contemporary theory of function spaces.

Our characterization of compactness of the Sobolev embedding~\eqref{intro:Sobolev_emb_restricted} reads as:
\begin{theorem}\label{thm:characterization_of_comp_emb_Y}
Let $m,n\in\N$, $n\geq2$. Let $X(\rn)$ and $Y(\rn)$ be \riSps{W}. The Sobolev embedding
\begin{equation}\label{thm:characterization_of_comp_emb_Y:eq:embedding}
\wmrX\hookrightarrow Y(\rn)
\end{equation}
is compact if and only if
\begin{equation}\label{thm:characterization_of_comp_emb_Y:eq:globally_ac}
\lim_{a\to\infty} \sup_{\|f\|_{\rep{X}}\leq 1} \|f^*\chi_{(a,\infty)}\|_{\rep{Y}} = 0
\end{equation}
and simultaneously either
\begin{equation}\label{thm:characterization_of_comp_emb_Y:eq:fund_Y_vanishes_0}
\lim_{t\to 0^+} \varphi_Y(t) = 0
\end{equation}
and one of the conditions
\begin{enumerate}[label=(A\arabic*), ref=(A\arabic*)]
	\item $m < n$ and
	\begin{equation}\label{thm:characterization_of_comp_emb_Y_locally_not_Linfty:eq:Hardy_op_compact}
		\lim_{a\to 0^+} \sup_{\|f\chi_{(0,1)}\|_{\rep{X}}\leq1} \Big\| \chi_{(0,a)}(t) \int_t^1 (f\chi_{(0,1)})^*(s) s^{-1 + \frac{m}{n}} \dd s \Big\|_{\rep{Y}} = 0;
	\end{equation}
	\item $m \geq n$
\end{enumerate}
is satisfied
or
\begin{equation}\label{thm:characterization_of_comp_emb_Y:eq:fund_Y_not_vanishing_at_0}
\lim_{t\to 0^+} \varphi_Y(t) > 0
\end{equation}
and one of the conditions
\begin{enumerate}[label=(B\arabic*), ref=(B\arabic*)]
	\item\label{enum:cond:m_less_than_n_Y_is_locally_Linfty}  $m < n$ and
	\begin{equation}\label{thm:characterization_of_comp_emb_Y_is_locally_Linfty:eq:Hardy_op_compact}
		\lim_{a\to 0^+} \sup_{\|f\chi_{(0,1)}\|_{\rep{X}}\leq1} \int_0^a (f\chi_{(0,1)})^*(s) s^{-1+\frac{m}{n}} \dd s = 0;
	\end{equation}
	\item  $m = n$ and
	\begin{equation}\label{thm:characterization_of_comp_emb_Y_is_locally_Linfty:eq:X_locally_not_L1}
	\lim_{t\to 0^+} \frac{t}{\varphi_X(t)} = 0;
	\end{equation}
	\item\label{enum:cond:m_bigger_than_n}  $m > n$
\end{enumerate}
is satisfied.
\end{theorem}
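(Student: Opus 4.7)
\medskip

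\noindent\textbf{Proof proposal.} The plan is to reduce the compactness of \eqref{thm:characterization_of_comp_emb_Y:eq:embedding} to a one-dimensional problem on $(0,\infty)$ and then split it into a \emph{behaviour at infinity} part and a \emph{behaviour near the origin} part, corresponding to condition~\eqref{thm:characterization_of_comp_emb_Y:eq:globally_ac} and the local conditions (A1)--(A2), (B1)--(B3), respectively. First, exploiting that for radial $u\in\wmrX$ the function $u$ is determined by its profile $\widetilde u(r)=u(x)$ with $r=|x|$, and that the measure of $\{|u|>\lambda\}$ equals $\omN\cdot|\{r>0\colon|\widetilde u(r)|>\lambda\}^{1/n}\cdot|^n$-type expressions, we identify $u^*$ with the decreasing rearrangement of $\widetilde u$ transported via $t=\omN r^n$. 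In this one-dimensional language, the Sobolev seminorms of order $m$ control an iterated Hardy-type operator with kernel $s^{-1+m/n}$ (locally near $t=0$) and a suitable decaying kernel near $t=\infty$ reflecting the $r^{-(n-1)/n\cdot\text{(something)}}$ decay coming from the radial lemma.

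Next, for \emph{sufficiency}, given a bounded sequence $\{u_k\}\subset\wmrX$, I would cut each $u_k$ via a smooth radial partition of unity into a piece supported in $B_R$ and a piece supported in $\rn\setminus B_{R/2}$. On $\rn\setminus B_{R/2}$, the one-dimensional reduction combined with a norm version of Lions' radial lemma (derived by dualizing the Hardy operator against the associate norm of $\rep{Y}$) yields a bound of the form $\|u_k\chi_{\{|x|>R\}}\|_{Y}\le \sup_{\|f\|_{\rep{X}}\le1}\|f^*\chi_{(a_R,\infty)}\|_{\rep{Y}}\cdot\|u_k\|_{\wmrX}$, so~\eqref{thm:characterization_of_comp_emb_Y:eq:globally_ac} forces uniform smallness of the tail. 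On $B_R$, I would invoke the compactness result for weighted Sobolev embeddings on balls (developed earlier in the paper) and observe that its characterization reduces precisely to the appropriate one of (A1)--(B3): the Hardy-operator conditions \eqref{thm:characterization_of_comp_emb_Y_locally_not_Linfty:eq:Hardy_op_compact} and \eqref{thm:characterization_of_comp_emb_Y_is_locally_Linfty:eq:Hardy_op_compact} for $m<n$, the non-triviality condition \eqref{thm:characterization_of_comp_emb_Y_is_locally_Linfty:eq:X_locally_not_L1} corresponding to the critical case $m=n$, and trivial compactness for $m>n$. Extracting a diagonal subsequence over an exhausting family of balls $B_{R_j}$ and combining with the tail estimate produces a Cauchy subsequence in $Y(\rn)$.

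For \emph{necessity} I would argue by contradiction with explicit test sequences. If~\eqref{thm:characterization_of_comp_emb_Y:eq:globally_ac} fails, pick $f_k\in \rep{X}$ with $\|f_k\|_{\rep{X}}\le1$ and $\|f_k^*\chi_{(a_k,\infty)}\|_{\rep{Y}}\not\to0$ for some $a_k\to\infty$, then realize $f_k^*\chi_{(a_k,\infty)}$ as a radial function on $\rn$ living on annuli marching to infinity; by rearrangement invariance and the radial structure, translating annuli to disjoint regions produces a bounded sequence in $\wmrX$ with no $Y$-convergent subsequence. If the appropriate local condition fails, test with radial bumps concentrating at the origin (or at points where $\varphi_Y(0^+)>0$ would let the Hardy operator detect non-compactness), again producing a bounded $\wmrX$-sequence that cannot cluster in $Y$. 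The case split on $\lim_{t\to0^+}\varphi_Y(t)$ and on whether $m$ is subcritical, critical, or supercritical with respect to $n$ dictates which of (A1)--(A2), (B1)--(B3) corresponds to the relevant local obstruction.

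The \emph{main obstacle} I anticipate is establishing the sharp \emph{norm} version of the radial lemma: the pointwise estimate~\eqref{intro:Lions_radial_lemma} is too crude when $X$ and $Y$ lack a natural exponent, and the correct statement must be phrased directly in terms of the rearrangement inequality $\|f^*\chi_{(a,\infty)}\|_{\rep{Y}}$ appearing in \eqref{thm:characterization_of_comp_emb_Y:eq:globally_ac}. Deriving this requires careful manipulation of the representation norms $\rep{X}$, $\rep{Y}$ together with the one-dimensional Hardy-type reduction on $(0,\infty)$, and verifying that the resulting condition is simultaneously necessary (which forces the critical constant $a$ to be sharp). A secondary subtlety is handling the supercritical/critical local cases $m\ge n$ where the embedding becomes ``almost $L^\infty$'': compactness at the origin needs to be matched with whatever $\varphi_Y(0^+)$ does, which explains the seemingly asymmetric condition \eqref{thm:characterization_of_comp_emb_Y_is_locally_Linfty:eq:X_locally_not_L1} appearing only in (B2).
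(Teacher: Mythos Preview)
Your overall architecture matches the paper's: split into a tail part governed by~\eqref{thm:characterization_of_comp_emb_Y:eq:globally_ac} via the norm radial lemma, and a local part on a fixed ball governed by the compactness theory for weighted Sobolev embeddings, with the case distinction on $\varphi_Y(0^+)$ and on $m$ versus $n$ exactly as you describe. The sufficiency direction is essentially right (and no diagonal extraction over $R_j\to\infty$ is needed---one $R$ suffices, since the tail bound from Proposition~\ref{prop:bounded_set_in_WmrX_tail_is_small} is uniform over the bounded set).

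The genuine gap is in the necessity of~\eqref{thm:characterization_of_comp_emb_Y:eq:globally_ac}. You propose to ``realize $f_k^*\chi_{(a_k,\infty)}$ as a radial function on $\rn$ living on annuli marching to infinity'' and then speak of ``translating annuli to disjoint regions''. But translates of radial functions are not radial, and more seriously, pulling back $f_k^*\chi_{(a_k,\infty)}$ along $t=\omega_n|x|^n$ yields a radial function with no regularity whatsoever---it will not lie in $\wmrX$. The paper's Proposition~\ref{prop:construction_of_radially_symmetric_function_vanishing_at_infinity} does real work here: starting from a nonincreasing simple $f$, it builds a modified profile $g=\sum_j\gamma_j b_j^{-m/n}\chi_{(0,b_j^{1/n})}$, integrates it $m$ times to get a $C^{m-1}$ function $v_m$, and multiplies by a cutoff $\eta_a$ pushing the support to $\{\omega_n|x|^n\gtrsim a\}$. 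The two-sided estimates $\|u_{f,a}\|_{\wmrX}\lesssim\|f\|_{\rep{X}}$ (bounding all derivatives up to order $m$, using $a\ge1$ so chain-rule factors stay bounded) and $\|\chi_{(a,\infty)}f\|_{\rep{Y}}\lesssim\|u_{f,a}\|_{Y(\rn)}$ (a careful lower bound on the iterated integral) are the substance of the argument; without them you have no test sequence. The same issue recurs for the local conditions, where Proposition~\ref{prop:construction_of_radially_symmetric_function_local_necessity_Hardy} supplies the analogous smooth test functions concentrating at the origin. A secondary point: the paper's proof of the norm radial lemma (Proposition~\ref{prop:bounded_set_in_WmrX_tail_is_small}) is not by dualizing a Hardy operator but by partitioning $(\omega_nR^n,\infty)$ into intervals of length $R^{n-1}$, inserting a partition of unity, and reducing to an averaging estimate (Proposition~\ref{prop:averaging_lemma}); you should check that any dualization approach really produces $\sup_{\|f\|_{\rep X}\le1}\|f^*\chi_{(R^{n-1},\infty)}\|_{\rep Y}$ and not merely a ratio of fundamental functions, which the paper shows is strictly weaker in general.
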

The notation is explained in detail in~\cref{sec:prel}. Here, we just mention that $f\mapsto f^*$ stands for the operation of nonincreasing rearrangement and $\fundX[Y]$ is the fundamental function of the space $Y$ (i.e., the norm of a characteristic function of a set with prescribed measure). For example, if $Y=L^q$, $q\in[1, \infty]$, then $\fundX[Y](t) = t^{1/q}$. Loosely speaking, \eqref{thm:characterization_of_comp_emb_Y:eq:fund_Y_vanishes_0} is true when $Y$ is not ``locally $L^\infty$'', whereas \eqref{thm:characterization_of_comp_emb_Y:eq:fund_Y_not_vanishing_at_0} is the case otherwise (cf.~\cite[Theorem~5.2]{S:12}). In particular, \cref{thm:characterization_of_comp_emb_Y} also characterizes when $\wmrX$ is compactly embedded into $L^\infty(\rn)$.

Important as general theorems are, concrete examples are often (at least) as valuable. Therefore, \cref{thm:characterization_of_comp_emb_Y} is also accompanied with a large number of concrete examples, which are provided in~\cref{sec:examples}. In particular, we provide a complete characterization of the compactness of the Sobolev embedding \eqref{intro:Sobolev_emb_restricted} when both $X$ and $Y$ are so--called Lorentz--Zygmund spaces (see~\cref{thm:compactness_entire_space_LZ_examples}, also~\cref{thm:compactness_entire_space_LLogL_examples}). The class of Lorentz--Zygmund spaces $L^{p,q,(\alpha_0,\alpha_\infty)}(\rn)$, $p,q\in[1,\infty]$, $(\alpha_0, \alpha_\infty)\in\R^2$, is a fine-grained scale of function spaces, which encompasses several different types of customary function spaces, but at the same time it is rather pleasant to work with. In particular, it contains Lebesgue spaces ($L^p(\rn) = L^{p,p,(0, 0)}(\rn)$), Lorentz spaces ($L^{p,q}(\rn) = L^{p,q,(0, 0)}(\rn)$) and some Orlicz spaces\textemdash in particular, those of ``logarithmic type'' (when $p=q < \infty$) and those of ``exponential type'' (when $p=q=\infty$). Using such a fine-grained scale of function spaces, we can also capture delicate limiting situations\textemdash such as when $X$ and $Y$ are ``close to each other'', when $X$ is ``close to $L^\frac{n}{m}$'', or when $Y$ is ``close to $L^\infty$''.

The main novelty in \cref{thm:characterization_of_comp_emb_Y}, besides the characterization itself, is the presence of the condition~\eqref{thm:characterization_of_comp_emb_Y:eq:globally_ac}. We will explain its role in more detail soon, but let us first focus on the conditions \eqref{thm:characterization_of_comp_emb_Y_locally_not_Linfty:eq:Hardy_op_compact} and \eqref{thm:characterization_of_comp_emb_Y_is_locally_Linfty:eq:Hardy_op_compact}. When suitably restricted to function spaces over finite measure spaces, they were shown in \cite{KP:08} (cf.~\cite{CR:07, P:06}) to be equivalent to the compactness of the Sobolev embedding
\begin{equation}\label{intro:Sobolev_emb_on_domains_unrestricted}
W^m X(\Omega) \hookrightarrow Y(\Omega),
\end{equation}
where $\Omega$ is a bounded regular domain in $\rn$, depending on whether $Y\neq L^\infty$ or not.  Their suitable restrictions to function spaces over finite measure spaces can be equivalently reformulated in terms of so-called almost compact embeddings between function spaces, a relation that we denote by $\ac$ (see~\cref{sec:prel} for the precise definition and more information). For example, when $Y\neq L^\infty$, the compactness of \eqref{intro:Sobolev_emb_on_domains_unrestricted} is equivalent to the fact that
\begin{equation}\label{intro:Sobolev_emb_on_domains_optimal_into_target}
Y_X(\Omega) \ac Y(\Omega),
\end{equation}
where $Y_X(\Omega)$ is the optimal rearrangement-invariant target function space in~\eqref{intro:Sobolev_emb_on_domains_unrestricted} (i.e., the smallest rearrangement-invariant target space in \eqref{intro:Sobolev_emb_on_domains_unrestricted} with which the embedding is still valid), which is described in \cite{KP:06} (cf.~\cite{EKP:00}). Loosely speaking, the almost compact embedding \eqref{intro:Sobolev_emb_on_domains_optimal_into_target} ensures that the norm of $Y(\Omega)$ is (locally) sufficiently weaker than the optimal one. For example, if $X(\Omega)=L^p(\Omega)$, $p\in[1, n/m)$, then $Y_X(\Omega) = L^{np/(n-mp),p}(\Omega)$, a Lorentz space strictly smaller than the Lebesgue space $L^{np/(n-mp)}(\Omega)$ (see~\cite{On:63, P:66, T:98}), and \eqref{intro:Sobolev_emb_on_domains_optimal_into_target} is true with $Y(\Omega) = L^q(\Omega)$ if and only if $q < np/(n-mp)$. Optimal function spaces and almost compact embeddings have also been successfully used to characterize compactness of other Sobolev embeddings\textemdash in particular, see~\cite{S:15} (Sobolev embeddings on domains equipped with probabilistic measures related to their isoperimetric profile) and~\cite{CM:19, CM:22} (Sobolev trace embeddings with respect to various measures). However, in all these papers the underlying domain is either bounded or at least of finite measure. In fact, there are no almost compact embeddings between \riSps{W} over nonatomic measure spaces of infinite measure (see~\cite[Theorem~4.5]{S:12}).

Let us finally turn our attention to the role of the condition~\eqref{thm:characterization_of_comp_emb_Y:eq:globally_ac}. Loosely speaking, it ensures that the norm of $Y(\rn)$ is ``globally sufficiently weaker'' than that of $X(\rn)$. For example, if $X(\rn) = L^p(\rn)$, $p\in[1, \infty]$, then~\eqref{thm:characterization_of_comp_emb_Y:eq:globally_ac} is satisfied with $Y(\rn) = L^q(\rn)$ if and only if $p<q\leq \infty$. Its role should be more clear from the follow proposition, which plays an important role in the proof of the sufficiency part of \cref{thm:characterization_of_comp_emb_Y}. It is also illustrative to compare the proposition with \eqref{intro:Lions_radial_lemma_norm_estimate}.

\begin{proposition}\label{prop:bounded_set_in_WmrX_tail_is_small}
Let $n\in\N$, $n\geq2$. Let $X(\rn)$ and $Y(\rn)$ be \riSps{W}. There is a constant $C>0$, depending only on the dimension $n$, such that
\begin{equation}\label{prop:bounded_set_in_WmrX_tail_is_small:quantitative_estimate}
\|u\chi_{\rn\setminus B_R}\|_{Y(\rn)} \leq C \Big( \sup_{\|f\|_{\rep{X}}\leq 1} \|f^*\chi_{(R^{n-1},\infty)}\|_{\rep{Y}} \Big) \|u\|_{W^1X(\rn)}
\end{equation}
for every $u\in \wmrX[1]$. In particular, if
\begin{equation}\label{prop:bounded_set_in_WmrX_tail_is_small:globally_ac}
\lim_{a\to\infty} \sup_{\|f\|_{\rep{X}}\leq 1} \|f^*\chi_{(a,\infty)}\|_{\rep{Y}} = 0,
\end{equation}
then
\begin{equation}\label{prop:bounded_set_in_WmrX_tail_is_small:tail_vanishes}
\lim_{R\to\infty} \sup_{u\in M} \|u\chi_{\rn\setminus B_R}\|_{Y(\rn)} = 0
\end{equation}
for every bounded set $M$ in $\wmrX[1]$.
\end{proposition}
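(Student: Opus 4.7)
The plan is to exploit the radial structure of $u$ to reduce the problem to a one-dimensional Hardy-type estimate on the nonincreasing rearrangement of the tail, and then extract the norm inequality involving $S_R := \sup_{\|f\|_{\rep{X}}\leq 1}\|f^*\chi_{(R^{n-1},\infty)}\|_{\rep{Y}}$ through an operator bound together with a scale-matching argument.

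By density, I would first assume $u \in C_c^\infty(\rn)$ is radial, $u(x) = \tilde u(|x|)$. The sole one-dimensional ingredient is the fundamental theorem of calculus, $|\tilde u(r)| \leq \int_r^\infty |\tilde u'(s)|\,\dd s$. I would then pass to the radial volume coordinate $t = \omN r^n$ and set $G(t) := |\tilde u((t/\omN)^{1/n})|$, $F(t) := |\tilde u'((t/\omN)^{1/n})|$ on $(0,\infty)$. Under the resulting correspondence, distribution functions of radial $\rn$-functions coincide with those of their one-dimensional representatives, so $\|u\|_{X(\rn)} = \|G\|_{\rep{X}}$ and $\|\nabla u\|_{X(\rn)} = \|F\|_{\rep{X}}$, and the FTC estimate transforms to
\[
G(t) \leq C_n \int_t^\infty F(\tau)\,\tau^{-1+1/n}\,\dd\tau, \qquad t > 0,
\]
with $C_n := (n\omN^{1/n})^{-1}$. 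Since $u\chi_{\rn\setminus B_R}$ corresponds to $G\chi_{(\omN R^n,\infty)}$ under the one-dimensional representation, and the map $t \mapsto \chi_{(\omN R^n,\infty)}(t)\int_t^\infty F(\tau)\tau^{-1+1/n}\,\dd\tau$ vanishes on $(0,\omN R^n)$ and is nonincreasing on $(\omN R^n,\infty)$, its nonincreasing rearrangement is the leftward shift by $\omN R^n$, yielding the pointwise estimate
\[
(u\chi_{\rn\setminus B_R})^*(t) \leq C_n \int_{t+\omN R^n}^\infty F(\tau)\,\tau^{-1+1/n}\,\dd\tau, \qquad t > 0.
\]

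The main obstacle will be converting this bound into the claimed inequality with $S_R$. I would apply the Hardy--Littlewood rearrangement inequality to replace $F$ by $F^*$, producing the kernel $(\sigma+t+\omN R^n)^{-1+1/n}$ after computing the nonincreasing rearrangement of $\tau\mapsto \chi_{(t+\omN R^n,\infty)}(\tau)\tau^{-1+1/n}$, and then split the $\sigma$-integration at $\sigma = R^{n-1}$. The arithmetic identity $(\omN R^n)^{-1+1/n}\cdot R^{n-1} = \omN^{-1+1/n}$ reconciles the two natural scales appearing in the problem: $\omN R^n$ from the radial volume substitution and $R^{n-1}$ from the definition of $S_R$. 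The contribution from $\sigma > R^{n-1}$ is directly controlled by the $\rep{Y}$-norm of $F^*\chi_{(R^{n-1},\infty)}$ and hence by $S_R\|F\|_{\rep{X}}$ through the defining property of $S_R$; the low-$\sigma$ contribution is absorbed via the Hardy averaging $F^{**}(R^{n-1})$ together with the pointwise bound $G \leq C_n\int F(\tau)\tau^{-1+1/n}\,\dd\tau$, which reintroduces the $\|G\|_{\rep{X}} + \|F\|_{\rep{X}} = \|u\|_{W^1X(\rn)}$ factor. Collecting these pieces yields \eqref{prop:bounded_set_in_WmrX_tail_is_small:quantitative_estimate} with a constant depending only on $n$.

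The second assertion \eqref{prop:bounded_set_in_WmrX_tail_is_small:tail_vanishes} is then immediate: assumption \eqref{prop:bounded_set_in_WmrX_tail_is_small:globally_ac} forces $S_R \to 0$ as $R\to\infty$, and boundedness of $M$ in $\wmrX[1]$ bounds $\sup_{u\in M}\|u\|_{W^1X(\rn)}$, so the quantitative estimate delivers $\sup_{u\in M}\|u\chi_{\rn\setminus B_R}\|_{Y(\rn)} \to 0$.
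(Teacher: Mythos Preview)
Your approach has a genuine gap that cannot be repaired within the framework you set up. The pointwise bound
\[
(u\chi_{\rn\setminus B_R})^*(t)\leq C_n\int_{t+\omN R^n}^\infty F(\tau)\,\tau^{-1+1/n}\dd\tau
\]
depends only on $F$ (i.e.\ on $|\nabla u|$), and from that point on the function $G$ (i.e.\ $u$ itself) has been irreversibly discarded. Your later claim that the low-$\sigma$ contribution ``reintroduces $\|G\|_{\rep{X}}+\|F\|_{\rep{X}}$'' is not substantiated: once $G$ has been bounded above by the Hardy integral, there is no mechanism to recover $\|G\|_{\rep{X}}$ as a controlling quantity. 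Concretely, take $n=2$, $X=Y=L^2$, fix $R$ and $A\gg R^2$, and let $\tilde u$ be a sawtooth on $[0,(A/\omega_2)^{1/2}]$ with slope $\pm1$ and amplitude $1/k$. Then $F=\chi_{(0,A)}$ and $\|F\|_{L^2}=\sqrt{A}$, while $\|G\|_{L^2}\leq\sqrt{A}/k$, so $\|u\|_{W^1X}\approx\sqrt{A}$ for large $k$. Your intermediate bound has $L^2$-norm $\approx A$ (a direct calculation), but $S_R\|u\|_{W^1X}\approx\sqrt{A}$, and the ratio blows up. The proposition itself holds for this $u$ since $|u|\leq 1/k$, but your intermediate estimate is far too coarse to see it. There is also a minor issue with the density reduction: radial $C_c^\infty$ functions need not be dense in $W^1_RX(\rn)$ for general r.i.\ $X$ (think $X=L^\infty$).

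The paper avoids this by never applying the global fundamental-theorem-of-calculus bound. Instead it partitions $(\omN R^n,\infty)$ into intervals $I_j$ of length $r_0=R^{n-1}$, introduces cutoffs $\eta_j$ supported on $I_{j-1}\cup I_j\cup I_{j+1}$, and writes $g(t^{1/n})\eta_j(t)$ as an integral over a \emph{finite} interval. The product rule then produces two terms: one involving $g'$ and one involving $g\eta_j'$. This is the crucial step that keeps both $\|u\|_X$ and $\|\nabla u\|_X$ in play. Each resulting expression is a sum of local averages over the $I_j$'s, and the paper controls these via a dedicated averaging lemma (its Proposition~3.1), which bounds such sums in $\rep{Y}$ by $(\varphi_Y(r_0)/\varphi_X(r_0)+S_R)\|\cdot\|_{\rep{X}}$; a separate observation (Proposition~3.2) shows the fundamental-function ratio is itself dominated by $S_R$. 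The localization scale $r_0=R^{n-1}$ is exactly what makes the weight $s^{-1+1/n}$ and the interval length balance to a dimensional constant.
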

Noteworthily, there is a simple geometric idea behind this proposition. Consider a radially symmetric function $u(x) = g(|x|)$ on $\rn$. When the size of $g$ is bigger than some $\lambda > 0$ on an interval $(R, R+\delta)$, $\delta>0$, then so is that of $u$ in the spherical shell $B_{R+\delta}\setminus B_{R}$. Now, since we assume that the dimension of $\rn$ is greater than $1$, for a fixed $\delta>0$, the ratio
\begin{equation*}
	\frac{|B_{R+\delta}\setminus B_{R}|}{|(R, R+\delta)|} = \frac{(R+\delta)^n - R^n}{\delta}\omega_n
\end{equation*}
goes to infinity as $R\to \infty$. Another possible point of view is that the maximum number of disjoint balls with radius $\delta>0$ centered on the sphere $\{x\in\rn: |x| = R\}$ goes to infinity as $R\to \infty$ (cf.~\cite{FKM:21}). These simple observations together with the fact that a~\riNorm{w} of a function depends only on the measure of its level sets are essential in our proof of~\cref{prop:bounded_set_in_WmrX_tail_is_small}.

Besides enabling us to recover compactness,  there is another important feature that the presence of radial symmetry brings in. It allows us to improve some weighted Sobolev embeddings on balls. Whereas the classical Sobolev embedding
\begin{equation}\label{intro:Sobolev_emb_Lp_Lq_ball_not_restricted}
W^{m,p}(B_R) \hookrightarrow L^q(B_R),
\end{equation}
where $p\in[1, n/m)$, holds if and only if $q\in[1, np/(n-mp)]$; and is compact if and only if $q\in[1, np/(n-mp))$, the weighted Sobolev embedding
\begin{equation}\label{intro:weighted_Sobolev_emb_Lp_Lq_restricted_ball}
W_R^{m,p}(B_R) \hookrightarrow L^q(B_R, \aldx)
\end{equation}
for radially symmetric functions, where $\dd \aldx(x) = |x|^\alpha \dd x$ with $\alpha>0$, holds if (and only if) $q\in[1, (n+\alpha)p/(n-mp)]$; and is compact if (and only if) $q\in[1, (n+\alpha)p/(n-mp))$ (see~\cite{GdFMdSM:11} and references therein). Note that the restriction to radially symmetric functions in~\eqref{intro:weighted_Sobolev_emb_Lp_Lq_restricted_ball} is essential\textemdash the critical exponent for the unrestricted weighted Sobolev embedding $W^{m,p}(B_R) \hookrightarrow L^q(B_R, \aldx)$ is still $np/(n-mp)$. This fact that \eqref{intro:weighted_Sobolev_emb_Lp_Lq_restricted_ball} is compact with bigger exponents $q$ than \eqref{intro:Sobolev_emb_Lp_Lq_ball_not_restricted} has been successfully used to deal with ``nonlinearities of higher order'' in the analysis of some partial differential equations\textemdash especially in analysis of the H\'{e}non equation (in particular, see~\cite{GS:08, GdFMdSM:11, N:82} and references therein). Motivated by this, we also study the weighted Sobolev embedding
\begin{equation}\label{intro:embedding_on_balls_Sob_emb}
		\wmrXB \hookrightarrow Y(B_R, \aldx)
\end{equation}
in the general framework of \riSps{W}. We note that the embedding \eqref{intro:embedding_on_balls_Sob_emb} does not involve any boundary conditions.

As with~\eqref{intro:Sobolev_emb_restricted}, our main goal is to characterize when the embedding \eqref{intro:embedding_on_balls_Sob_emb} is compact. This basically breaks down into two steps, each of independent interest. We first characterize what the optimal target \riSp{W} in \eqref{intro:embedding_on_balls_Sob_emb} is (see~\cref{cor:embedding_on_balls_optimal_target}) and then use it to characterize the compactness of \eqref{intro:embedding_on_balls_Sob_emb} (see~\cref{thm:embedding_on_balls_compactness_not_Linfty}). Unlike before, this is by now a standard argument, successfully used several times when dealing with Sobolev embeddings over finite measure spaces (recall our discussion about \eqref{intro:Sobolev_emb_on_domains_unrestricted} and \eqref{intro:Sobolev_emb_on_domains_optimal_into_target}). Nevertheless, although we do not need to develop a new approach to tackle \eqref{intro:embedding_on_balls_Sob_emb}, we still need to carefully supply suitably modified existing arguments with new ones to completely exploit the presence of radial symmetry, which, especially when $m=1$, essentially reduces \eqref{intro:embedding_on_balls_Sob_emb} to a one-dimensional weighted problem (the interested reader is referred to \cite[Chapter~1]{Mabook} for some classical results in this direction). Furthermore, even though the study of optimal \riSps{W} in Sobolev embeddings has been very active for a long time and by now covers a large number of different settings (e.g., see~\cite{ACPS:18, BC:21, BCS:25, CP:09, CP:16, CPS:15, CPS:20, CMP:23, D:25Online, EKP:00, KP:06, M:21} and references therein), the existing results do not cover the weighted Sobolev embedding~\eqref{intro:embedding_on_balls_Sob_emb}, to the best of the author's knowledge. Our general theorems concerning \eqref{intro:embedding_on_balls_Sob_emb} are also accompanied by concrete examples, which are provided in~\cref{sec:examples}. Finally, we note that (unrestricted) weighted or double weighted Sobolev embeddings with weights that are powers of the distance from the origin are closely connected to Caffarelli-Kohn-Nirenberg inequalities and the related (double) weighted isoperimetric problem. The interested reader is referred to \cite{ABCMP:17, Mc:23} and references therein for more information.

The paper is structured as follows. In~\cref{sec:prel}, we fix notation and summarize some basic theory that we will use later to make this paper self-contained for the most part. \cref{sec:functions_on_entire_space} contains auxiliary results, some of which may be of independent interest, which are later used in the proof of \cref{thm:characterization_of_comp_emb_Y}. It also contains the proof of \cref{prop:bounded_set_in_WmrX_tail_is_small}. In \cref{sec:weighted_Sobolev_on_balls}, we study the weighted Sobolev embedding \eqref{intro:embedding_on_balls_Sob_emb}. \cref{sec:proof_of_main_result} contains the proof of \cref{thm:characterization_of_comp_emb_Y}. Finally, \cref{sec:examples} contains concrete examples concerning both \eqref{intro:Sobolev_emb_restricted} and \eqref{intro:embedding_on_balls_Sob_emb}.

%%%%%%%%%%%%%%%%%%%%%%%%%%%%%%%%%%%%%%%%%%%%%%%%%%%%%%%%%%%%%%%%%%%%%%%%%%%%%%%%%%%%%%%%%%%%%%%%%%%%%%%%%%%%%%%%%%%%

\section{Preliminaries}\label{sec:prel}
Throughout the entire section, $(\RR, \mu)$ is a $\sigma$-finite nonatomic nontrivial measure space. The set of all $\mu$-measurable functions on $(\RR, \mu)$ is denoted by $\M(\RR, \mu)$. We also denote by $\Mpl(\RR, \mu)$ the set of all those functions from $\M(\RR, \mu)$ that are nonnegative $\mu$-a.e.\ in $\RR$. In the following sections, $(\RR, \mu)$ will be either an interval (endowed with the Lebesgue measure) or (a subset of) $\rn$ endowed with a measure that is absolutely continuous with respect to the Lebesgue measure. In the entire paper, we adhere to the convention that $0\cdot\infty=0$.

\subsection{Rearrangements and r.i.\ spaces}

The \emph{distribution function} $f_*\colon (0, \infty) \to [0, \infty]$ of a function $f\in \M(\RR,\mu)$  is defined as
\begin{equation*}
f_*(\lambda) = \mu(\{x\in\RR: |f(x)| > \lambda\}),\ \lambda\in(0, \infty).
\end{equation*}
We say that two functions $f\in \M(\RR,\mu)$ and $g\in \M(S,\nu)$, where $(S,\nu)$ is a possibly different measure space, are \emph{equimeasurable} if $f_* = g_*$. The \textit{nonincreasing rearrangement} $f^* \colon  (0,\infty) \to [0, \infty ]$ of $f$  is
defined as
\begin{equation*}
f^*(t)=\inf\{\lambda\in(0,\infty) : f_*(\lambda)\leq t\},\ t\in(0,\infty).
\end{equation*}
The functions $f$ and $f^*$ are equimeasurable. The function $f^*$ is nonincreasing and right continuous. We have $\esssup_{x\in\RR}|f(x)| = f^*(0^+)$ and $f^*(t) = 0$ for every $t\geq \mu(\{x\in\RR: f(x) \neq 0\})$. If $f=\chi_E$ for some $\mu$-measurable $E\subseteq\RR$, then $f^* = \chi_{(0, \mu(E))}$. If $f$ is a nonnegative nonincreasing function on $(0, \infty)$, then $f = f^*$, up to a countable set of jumps.  If $|f|\leq |g|$ $\mu$-a.e.\ in $\RR$, then $f^*\leq g^*$. Two functions are equimeasurable if and only if their nonincreasing rearrangements coincide. We also define the \emph{maximal nonincreasing rearrangement} $f^{**} \colon (0,\infty) \to [0, \infty ]$ of $f\in \M(R,\mu)$ as
\begin{equation*}
f^{**}(t)=\frac1{t} \int_0^ t f^{*}(s) \dd s,\ t\in(0,\infty).
\end{equation*}
We always have $f^*\leq f^{**}$. The function $f^{**}$ is nonincreasing and continuous, but unlike $f^*$, $f^{**}$ need not be equimeasurable with $f$.

We say that a functional $\|\cdot\|_{X(\RR, \mu)}\colon \Mpl(\RR, \mu) \to [0, \infty]$ is a \emph{\riNorm{w}} (we will write \emph{an \riNorm{}}) if for all $f,g,f_k\in\Mpl(\RR, \mu)$, $k\in\N$, and $\lambda\geq0$:
\begin{enumerate}[label=(P\arabic*), ref=(P\arabic*)]
	\item $\|f + g\|_{X(\RR, \mu)} \leq \|f\|_{X(\RR, \mu)} + \|g\|_{X(\RR, \mu)}$; $\|f\|_{X(\RR, \mu)} = 0$ if and only if $f=0$ $\mu$-a.e.; $\|\lambda f\|_{X(\RR, \mu)} = \lambda \|f\|_{X(\RR, \mu)}$;
	\item $\|f\|_{X(\RR, \mu)} \leq \|g\|_{X(\RR, \mu)}$ whenever $f\leq g$ $\mu$-a.e.;
	\item $\|f_k\|_{X(\RR, \mu)} \nearrow \|f\|_{X(\RR, \mu)}$ whenever $f_k\nearrow f$ pointwise $\mu$-a.e.;
	\item $\|\chi_E\|_{X(\RR, \mu)} < \infty$ whenever $\mu(E) < \infty$;
	\item $\|f\chi_{E}\|_{L^1(\RR, \mu)}\leq C_{E,X} \|f\|_{X(\RR, \mu)}$ whenever $\mu(E) < \infty$, where $C_{E,X}\in(0, \infty)$ is a constant that is independent of $f$ (but it may depend on $E$ or $\|\cdot\|_{X(\RR, \mu)}$);
	\item $\|f\|_{X(\RR, \mu)} = \|g\|_{X(\RR, \mu)}$ whenever $f$ and $g$ are equimeasurable.
\end{enumerate}

Given an \riNorm{} $\|\cdot\|_{X(\RR, \mu)}$, we extend it to all functions $f\in\M(\RR,\mu)$ by
\begin{equation*}
\|f\|_{X(\RR, \mu)} = \|\,|f|\,\|_{X(\RR, \mu)},\ f\in\M(\RR,\mu).
\end{equation*}

The function space $X(\RR, \mu)$ defined as
\begin{equation*}
X(\RR, \mu) = \{f\in\M(\RR,\mu): \|f\|_{X(\RR, \mu)} < \infty\}
\end{equation*}
endowed with the functional $\|\cdot\|_{X(\RR, \mu)}$ is called a \emph{\riSp{W}} (we will write \emph{an \riSp{}}). It is a Banach space (in particular, $\|\cdot\|_{X(\RR, \mu)}$ is a norm on $X(\RR, \mu)$), and the functions from $X(\RR, \mu)$ are finite $\mu$-a.e. An \riSp{} always contains simple functions. By a simple function, we mean a (finite) linear combination of characteristic functions of sets of finite measure.

Textbook examples of \riSps{} are the usual $L^p(\RR, \mu)$ spaces (i.e., \emph{Lebesgue spaces}), $p\in[1, \infty]$, Lorentz spaces, or Orlicz spaces. The rearrangement invariance of the usual Lebesgue norm $\|\cdot\|_{L^p(\RR, \mu)}$, $p\in[1, \infty]$, follows from the layer cake formula (e.g., \cite[Chapter~2, Proposition~1.8]{BS} or \cite[Theorem~1.13]{LL:01}), which tells us that
\begin{equation*}
\|f\|_{L^p(\RR,\mu)} = \|f^*\|_{L^p(0, \infty)} \quad \text{for every $f\in\M(\RR, \mu)$}.
\end{equation*}
\emph{Lorentz spaces} $L^{p, q}(\RR, \mu)$ are an important generalization of Lebesgue spaces, where either $p\in(1, \infty)$ and $q\in[1, \infty]$ or $p = q = 1$ or $p = q = \infty$. The corresponding \riNorm{} $\|\cdot\|_{L^{p,q}(\RR, \mu)}$ is defined as
\begin{equation*}
\|f\|_{L^{p,q}(\RR,\mu)} = \|t^{\frac1{p} - \frac1{q}}f^*(t)\|_{L^q(0, \infty)},\ f\in\Mpl(\RR,\mu).
\end{equation*}
However, one needs to be more careful here. The functional $\|\cdot\|_{L^{p,q}(\RR,\mu)}$ is not an \riNorm{} when $1< p < q \leq\infty$, because it is not subadditive. When $1< p < q \leq\infty$, the functional $\|\cdot\|_{L^{p,q}(\RR,\mu)}$ is merely equivalent to an \riNorm{}. More precisely, the functional
\begin{equation*}
\|f\|_{L^{(p,q)}(\RR,\mu)} = \|f^{**}\chi_{(0, \mu(\RR))}\|_{L^{p,q}(0, \infty)},\ f\in\Mpl(\RR,\mu),
\end{equation*}
is an \riNorm{}, and there are positive constants $C_1$ and $C_2$ such that
\begin{equation*}
C_1\|f\|_{L^{(p,q)}(\RR,\mu)} \leq \|f\|_{L^{p,q}(\RR,\mu)} \leq C_2\|f\|_{L^{(p,q)}(\RR,\mu)} \quad \text{for every $f\in\Mpl(\RR,\mu)$},
\end{equation*}
provided that either $p\in(1, \infty)$ and $q\in[1 ,\infty]$ or $p=q=\infty$. The interested reader can find more information in \cite[Chapter~4, Section~4]{BS} or \cite{H:66}. In view of that, we will consider $L^{p,q}(\RR,\mu)$ an \riSp{} even when $1< p < q \leq\infty$. Note that
\begin{equation*}
\|\cdot\|_{L^p(\RR,\mu)} = \|\cdot\|_{L^{p,p}(\RR,\mu)} \quad \text{for every $p\in[1, \infty]$}.
\end{equation*}
Furthermore, when $p\in(1, \infty)$ and $1\leq q_1 < q_2\leq \infty$, we have
\begin{equation*}
L^{p, q_1}(\RR,\mu) \subsetneq L^{p, q_2}(\RR,\mu),
\end{equation*}
regardless of whether $\mu(\RR) < \infty$ or not. \emph{Orlicz spaces} $L^A(\RR, \mu)$ are another very important generalization of Lebesgue spaces. The corresponding \riNorm{} $\|\cdot\|_{L^A(\RR, \mu)}$ is defined as
\begin{equation*}
\|f\|_{L^A(\RR,\mu)} = \inf\Bigg\{ \lambda>0: \int_{\RR} A\Big( \frac{|f(x)|}{\lambda} \Big) \dd\mu(x) \leq 1 \Bigg\},\ f\in\Mpl(\RR,\mu),
\end{equation*}
where $A\colon[0, \infty] \to [0, \infty]$ is a Young function. A function $A\colon[0, \infty] \to [0, \infty]$ is called a Young function if it is convex, left-continuous, vanishing at $0$, and not constant on the entire interval $(0, \infty)$. For example, when $p\in[1, \infty)$, we have $\|\cdot\|_{L^p(\RR,\mu)} = \|\cdot\|_{L^A(\RR,\mu)}$ with $A(t) = t^p$, $t\geq0$. We also have $\|\cdot\|_{L^\infty(\RR,\mu)} = \|\cdot\|_{L^A(\RR,\mu)}$ with $A(t) = \infty\cdot\chi_{(1, \infty]}(t)$, $t\geq0$. Besides the classical textbooks \cite{BS,RR:91}, the interested reader can find more information on the contemporary theory of Orlicz spaces and in particular Orlicz--Sobolev spaces in~\cite{C:10, MPT:23}.

An analogue of Fatou's lemma is at our disposal in the framework of \riSps{}. More precisely, if $\M(\RR,\mu) \ni f_k \to f$ pointwise $\mu$-a.e., then
\begin{equation}\label{prel:Fatou_lemma}
\|f\|_{X(\RR, \mu)} \leq \liminf_{k\to\infty} \|f_k\|_{X(\RR, \mu)}.
\end{equation}

With any \riNorm{} $\|\cdot\|_{X(\RR, \mu)}$, there is associated another \riNorm{}, $\|\cdot\|_{X'(\RR, \mu)}$, defined for $g \in  \Mpl(\RR,\mu)$ as
\begin{equation}\label{prel:associate_norm}
\|g\|_{X'(\RR, \mu)}=\sup_{\|f\|_{X(\RR, \mu)}\leq1} \int_{\RR} |f(x)||g(x)| \dd \mu(x),\ g \in  \Mpl(\RR,\mu).
\end{equation}
The \riNorm{} $\|\cdot\|_{X'(\RR, \mu)}$ is called the \emph{associate norm} of $\|\cdot\|_{X(\RR, \mu)}$. The resulting \riSp{} $X'(\RR, \mu)$ is called the \emph{associate space} (of $X(\RR, \mu)$). The definition of $\|\cdot\|_{X'(\RR, \mu)}$ immediately gives us that the H\"older inequality
\begin{equation}\label{prel:Holder}
\int_{\RR} |f||g| \dd\mu \leq \|f\|_{X(\RR, \mu)} \|g\|_{X'(\RR,\mu)} \quad \text{for all $f,g\in\M(\RR,\mu)$}
\end{equation}
is true.

We always have $X''(\RR, \mu) = X(\RR, \mu)$ with equal norms, where $X''(\RR, \mu) = (X')'(\RR, \mu)$. In particular, we have
\begin{equation}\label{prel:double_associate_norm}
\|f\|_{X(\RR, \mu)} = \sup_{\|g\|_{X'(\RR, \mu)} \leq 1} \int_{\RR} |f(x)| |g(x)| \dd \mu(x) \quad \text{for every $f\in\M(\RR, \mu)$}.
\end{equation}
Moreover, we also have
\begin{equation}\label{prel:double_associate_norm_nonincreasing_functions}
\|f\|_{X(\RR, \mu)} = \sup_{\|g\|_{X'(\RR, \mu)} \leq 1} \int_0^\infty f^*(t) g^*(t) \dd t \quad \text{for every $f\in\M(\RR, \mu)$}.
\end{equation}
For example, if $X(\RR, \mu) = L^{p,q}(\RR, \mu)$, where either $p\in(1, \infty)$ and $q\in[1, \infty]$ or $p = q = 1$ or $p = q = \infty$, then $X'(\RR, \mu) = L^{p',q'}(\RR, \mu)$, up to equivalent norms (with the equivalence being equality when $p=q$).

The \emph{fundamental function} of an \riSp{} $X(\RR, \mu)$ is the function $\fundX\colon [0, \mu(\RR)) \to [0, \infty)$ defined as
\begin{equation*}
\fundX(t) = \|\chi_E\|_{X(\RR, \mu)},\ t\in[0, \mu(\RR)),
\end{equation*}
where $E\subseteq\RR$ is any $\mu$-measurable set with $\mu(E) = t$. The rearrangement invariance of $\|\cdot\|_{X(\RR, \mu)}$ ensures that the fundamental function is well defined. Furthermore, it is quasiconcave and satisfies
\begin{equation}\label{prel:fund_of_X_times_of_Xasoc}
\fundX(t) \fundX[X'](t) = t \quad \text{for every $t\in[0,\mu(\RR))$}.
\end{equation}
The quasiconcavity means that $\fundX(t) = 0$ if and only if $t = 0$, $\fundX$ is nondecreasing, and the function $(0,\infty)\ni t\mapsto \fundX(t)/t$ is nonincreasing. For example, if $X(\RR, \mu) = L^p(\RR, \mu)$, then $\fundX(t) = t^\frac1{p}$, $t\in(0,\mu(\RR))$. More generally, if $X(\RR, \mu) = L^{p,q}(\RR, \mu)$, where either $p\in(1, \infty)$ and $q\in[1, \infty]$ or $p = q = 1$ or $p = q = \infty$, then $\fundX$ coincides with $\fundX[L^p]$, up to a constant multiple depending only on $p$ and $q$.

Every \riSp{} $X(\RR, \mu)$ can be uniquely represented as an \riSp{} over an interval. When $\mu(\RR) = \infty$, there is an unique \riSp{} over $(0, \infty)$, denoted $\rep{X}$, satisfying
\begin{equation*}
\|f\|_{X(\RR, \mu)} = \|f^*\|_{\rep{X}} \quad \text{for every $f\in\M(\RR, \mu)$}.
\end{equation*}
Moreover, we have $\fundX = \fundXrep$. When $\mu(\RR) < \infty$, there is an unique \riSp{} over $(0, 1)$, denoted $\repFin{X}$, satisfying
\begin{equation*}
\|f\|_{X(\RR, \mu)} = \|f^*(\mu(\RR)t)\|_{\repFin{X}} \quad \text{for every $f\in\M(\RR, \mu)$}.
\end{equation*}
Moreover, we have $\fundX(t) = \fundXrep(t/\mu(\RR))$ for every $t\in(0, \mu(\RR))$. Note that the representation is over $(0,1)$, not $(0, \mu(\RR))$. The choice of the unit interval, no matter what the measure of $0<\mu(\RR)<\infty$ is, will make some statements and their proofs less technical. On the other hand, when $\mu(\RR) \neq 1$, the representation norm over $(0,1)$ may differ from the usual one over $(0, \mu(\RR))$ by a constant multiple depending on $\mu(\RR)$. In fact, if $Z(0,\mu(\RR))$ is the unique \riSp{} satisfying
\begin{equation*}
\|f\|_{X(\RR, \mu)} = \|f^*\|_{Z(0, \mu(\RR))} \quad \text{for every $f\in\M(\RR, \mu)$},
\end{equation*}
then
\begin{equation*}
\|g\|_{\repFin{X}} = \big\| g\big( t/\mu(R) \big) \big\|_{Z(0, \mu(\RR))} \quad \text{for every $g\in\M(0, 1)$}.
\end{equation*}
However, in view of the boundedness of dilation operators on \riSps{} (see~\eqref{prel:dilation_bounded} below), we see that this is immaterial for our purposes. For example, we have
\begin{equation*}
\|f\|_{L^{p,q}(\RR, \mu)} = \|f^*\|_{L^{p,q}(0, \mu(\RR))} \quad \text{for every $f\in\M(\RR, \mu)$}
\end{equation*}
or
\begin{equation*}
\|f\|_{L^A(\RR, \mu)} = \|f^*\|_{L^A(0, \mu(\RR))} \quad \text{for every $f\in\M(\RR, \mu)$}.
\end{equation*}

Let $L\in(0, \infty]$. For every $a>0$ and $f\in\M(0, L)$, the \emph{dilation operator} $D_a$ defined as
\begin{equation*}
D_af(t)= f\Big( \frac{t}{a} \Big)\chi_{(0, aL)}(t),\ f\in\Mpl(0, L),\ t\in(0, L),
\end{equation*}
is bounded on every \riSp{} $X(0, L)$. Moreover, we have
\begin{equation}\label{prel:dilation_bounded}
\|D_af\|_{X(0,L)} \leq \max\{1,a\} \|f\|_{X(0,L)}\quad\text{for every $f\in\M(0,L)$}.
\end{equation}

We say that a Banach space $A$ is \emph{embedded in} a Banach space $B$, and write $A\hookrightarrow B$, if $A\subseteq B$ (in the set-theoretic sense) and the inclusion is continuous\textemdash that is, there is a constant $C>0$ such that $\|f\|_B \leq C \|f\|_A$ for every $f\in A$. We say that $A$ and $B$ are equivalent, and write $A = B$, if $A\hookrightarrow B$ and $B\hookrightarrow A$. In other words, $A$ and $B$ coincide in the set-theoretic sense and their norms are equivalent. If $X(\RR, \mu)$ and $Y(\RR, \mu)$ are \riSps{}, then $X(\RR, \mu) \hookrightarrow Y(\RR, \mu)$ if and only if $X(\RR, \mu) \subseteq Y(\RR, \mu)$. Furthermore, we have
\begin{equation}\label{prel:embedding_iff_asoc_embedding}
X(\RR, \mu) \hookrightarrow Y(\RR, \mu) \qquad \text{if and only if} \qquad Y'(\RR, \mu) \hookrightarrow X'(\RR, \mu).
\end{equation}
We also have $X(\RR, \mu) \hookrightarrow Y(\RR, \mu)$ if and only if $\rep[0,L]{X} \hookrightarrow \rep[0,L]{Y}$, where $L$ is either $1$ or $\infty$ depending on whether $\mu(\RR)$ is finite or not. Every \riSp{} $X(\RR, \mu)$ lies somewhere between $(L^1 \cap L^\infty)(\RR,\mu)$ and $(L^1 + L^\infty)(\RR,\mu)$. More precisely, we have
\begin{equation*}
(L^1 \cap L^\infty)(\RR,\mu) \hookrightarrow X(\RR, \mu) \hookrightarrow (L^1 + L^\infty)(\RR,\mu).
\end{equation*}
Moreover, since $(L^1 \cap L^\infty)(\RR,\mu) = L^\infty(\RR,\mu)$ and $(L^1 + L^\infty)(\RR,\mu) = L^1(\RR,\mu)$ when $\mu(\RR) < \infty$, we have
\begin{equation}\label{prel:Linfty_emb_X_emb_L1_finite_measure}
L^\infty(\RR,\mu) \hookrightarrow X(\RR, \mu) \hookrightarrow L^1(\RR,\mu)
\end{equation}
provided that $\mu(\RR) < \infty$. The interested reader can find more information on the theory of \riSps{} in~\cite{BS}.

Besides $\hookrightarrow$, there is another important relation between function spaces, which is known to play a significant role in the study of compactness in \riSps{} (more generally, in Banach function spaces) when $\mu(\RR) < \infty$. Assume for the rest of this paragraph that $\mu(\RR) < \infty$. We say that an \riSp{} $X(\RR, \mu)$ is \emph{almost compactly embedded} in an \riSp{} $Y(\RR, \mu)$, and write $X(\RR, \mu) \ac Y(\RR, \mu)$, if
\begin{equation*}
\lim_{k\to\infty}\sup_{\|f\|_{X(\RR, \mu)\leq1}} \|f\chi_{E_k}\|_{Y(\RR, \mu)} = 0
\end{equation*}
for every sequence $\{E_k\}_{k = 1}^\infty\subseteq \RR$ of $\mu$-measurable sets such that $\chi_{E_k} \to 0$ pointwise $\mu$-a.e. In other words, bounded sets in $X(\RR, \mu)$ have uniformly absolutely continuous norm in $Y(\RR, \mu)$. Similarly to~\eqref{prel:embedding_iff_asoc_embedding}, we have
\begin{equation}\label{prel:ac_iff_asocAC}
X(\RR,\mu) \ac Y(\RR,\mu) \quad \text{if and only if} \quad Y'(\RR,\mu) \ac X'(\RR,\mu).
\end{equation}
Furthermore, we have
\begin{equation}\label{prel:ac_iff_repAC}
X(\RR,\mu) \ac Y(\RR,\mu) \quad \text{if and only if} \quad \lim_{a\to0^+} \sup_{\|f\|_{\repFin{X}}\leq1} \|f^*\chi_{(0,a)}\|_{\repFin{Y}}.
\end{equation}
For example, $L^{p,q}(\RR,\mu) \ac L^{r,s}(\RR,\mu)$ if and only if $p>r$. The interested reader is referred to \cite{S:12} (cf.~\cite{F-MMP:10, LZ:63}), where the relation $\ac$ is thoroughly studied.

We will sometimes need to work with restrictions of \riSps{}. If $X(\RR, \mu)$ is an \riSp{} and $E\subseteq \RR$ is a $\mu$-measurable set of positive measure, then the functional $\|\cdot\|_{Z(E, \mu)}$ defined as
\begin{equation*}
\| f \|_{Z(E, \mu)} = \|f\chi_E\|_{X(\RR, \mu)},\ f\in\Mpl(R, \mu),
\end{equation*}
is an \riNorm{} over $(E, \mu)$. We will denote the resulting \riSp{} $X(E, \mu)$. When $\mu(\RR) = \infty$ and $\mu(E) < \infty$, the representation norm of $Z(E, \mu) = X(E, \mu)$ and that of $X(\RR, \mu)$ satisfy
\begin{align}
\min\{\mu(E), 1\}\|f\chi_{(0,1)}\|_{\rep{X}}&\leq \|f\|_{\repFin{Z}} \nonumber\\
&\leq \max\{\mu(E), 1\}\|f\chi_{(0,1)}\|_{\rep{X}} \label{prel:representation_space_restriction}
\end{align}
for every $f\in\M(0,1)$.

In particular, we will be interested in the situation where $(\RR, \mu) = \rn$ (endowed with the Lebesgue measure), $n\geq2$, and $(E, \mu) = (B_R, \aldx)$, where $B_R\subseteq\rn$ is the (open) ball centered at the origin with radius $R\in(0, \infty)$ and $\aldx$ is the weighted measure
\begin{equation*}
\dd \aldx(x) = |x|^\alpha \dd x,
\end{equation*}
where $\alpha \in[0, \infty)$. The weighted measure of $B_R$ is
\begin{equation*}
\aldx(B_R) = \alBvol R^{n+\alpha},
\end{equation*}
where 
\begin{equation*}
\alBvol = \frac{n}{n+\alpha} \omN
\end{equation*}
and $\omN$ is the volume of the unit ball in $\rn$. Finally, the mapping $\rn\ni x \mapsto \alBvol |x|^{n+\alpha}$ is a measure-preserving transformation (in the sense of~\cite[Chapter~2, Section~7]{BS}) from $(\rn, \aldx)$ into $(0, \infty)$. In particular, we have
\begin{align}
\|f(\omN |x|^n)\|_{X(\rn)} &= \|f\|_{\rep{X}} \quad \text{for every $f\in\M(0,\infty)$} \label{prel:representation_measure_preserving_entire_space}
\intertext{and}
\Big\| f\Big( \Big( \frac{|x|}{R} \Big)^{n+\alpha} \Big) \Big\|_{X(B_R,\aldx)} &= \|f\|_{\repFin{X}} \quad \text{for every $f\in\M(0,1)$}. \label{prel:representation_measure_preserving_ball}
\end{align}

\subsection{Sobolev spaces}
Throughout the entire subsection, let $m,n\in\N$, $n\geq2$, and let $\Omega\subseteq\rn$ be an open set. When $u$ is a $m$-times weakly differentiable function on $\Omega$, we denote by $\nabla^k u$, $k\in\{0, 1,\dots, m\}$, the (arbitrarily arranged) vector of all weak derivatives of $u$ of order $k$, where $\nabla^0 u = u$. We also denote by $D^m u$ the vector of all weak derivatives of $u$ of order $k=1, \dots, m$. Let $X(\Omega)$ be an \riSp{}. The \emph{Sobolev space} $W^mX(\Omega)$ built upon $X(\Omega)$ is defined as
\begin{equation*}
W^mX(\Omega) = \{u\in X(\Omega): \text{$u$ is $m$-times weakly differentiable and $|D^m u|\in X(\Omega)$}\}.
\end{equation*}
We endow $W^mX(\Omega)$ with the norm defined as
\begin{equation*}
\|u\|_{W^mX(\Omega)} = \|u\|_{X(\Omega)} + \|\,|D^m u|\,\|_{X(\Omega)},\ u\in X(\Omega).
\end{equation*}
We will write $\|D^m u\|_{X(\Omega)}$ instead of $\|\,|D^m u|\,\|_{X(\Omega)}$. Similarly for $\nabla^k u$. Note that $W^m L^p(\Omega)$, where $p\in[1, \infty]$, is the usual Sobolev space $W^{m,p}(\Omega)$, up to equivalent norms.

Let $\Omega = B_R$, where $B_R$ is the open ball in $\rn$ centered at the origin with radius $R\in(0, \infty]$\textemdash when $R=\infty$, $B_R$ is to be interpreted as $\rn$. We denote by $\wmrXB$ the subspace of $W^mX(B_R)$ consisting of those functions from $W^mX(B_R)$ that are radially symmetric. If $u\in\wmrXB$, then there is a function $g\colon I_R \to \R$ such that (see~\cite{B:11, GdFMdSM:11})
\begin{equation*}
u(x) = g(|x|) \quad \text{for a.e.\ $x\in B_R$}
\end{equation*}
and $g^{(j)}\in\aclocInt{I_R}$ for $j=0,\dots,m-1$, where $I_R = (0,R] \cap \R$. By $\aclocInt{I_R}$, we denote the collection of locally absolutely continuous functions on the interval $I_R$. Furthermore, we have $|\nabla u(x)| = |g'(|x|)|$ for a.e.\ $x\in B_R$. As for higher order derivatives, it was shown in \cite[Theorem~2.2]{GdFMdSM:11} that
\begin{equation}\label{prel:ineq_between_norm_of_gradient_and_derivative_of_the_one_var_func}
|g^{(j)}(|x|)|\leq |\nabla^{j}u(x)| \quad \text{for every $j\in\{1,\dots,m\}$ and a.e.\ $x\in B_R$}.
\end{equation}

%%%%%%%%%%%%%%%%%%%%%%%%%%%%%%%%%%%%%%%%%%%%%%%%%%%%%%%%%%%%%%%%%%%%%%%%%%%%%%%%%%%%%%%%%%%%%%%%%%%%%%%%%%%%%%%%%%%%

\section{Sobolev functions with radial symmetry on \texorpdfstring{$\rn$}{the entire space}}\label{sec:functions_on_entire_space}
In this section, we prove \cref{prop:bounded_set_in_WmrX_tail_is_small} and also equip ourselves with tools that we will need later to prove \cref{thm:characterization_of_comp_emb_Y}. We start with a proposition about averaging in \riSps{}.
\begin{proposition}\label{prop:averaging_lemma}
Let $M\in\N$ and $\{I_j\}_{j = 1}^{M+1}$ be a system of nonoverlapping bounded intervals in $(0, \infty)$, each of length $\delta>0$. For all \riSps{} $X(0, \infty)$ and $Y(0, \infty)$, we have
\begin{equation*}
\Big\| \sum_{j = 1}^M  \chi_{I_j}(t)\frac1{\delta} \int_{I_j\cup I_{j+1}} g \Big\|_{Y(0, \infty)} \leq 2 \Bigg( \frac{\varphi_Y(\delta)}{\varphi_X(\delta)} + \sup_{\|f\|_{X(0, \infty)\leq1}}\|f^*\chi_{(\delta, \infty)}\|_{Y(0, \infty)} \Bigg) \|g\|_{X(0, \infty)}
\end{equation*}
for every $g\in\Mpl(0, \infty)$.
\end{proposition}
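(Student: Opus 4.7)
The plan is: make $F$ explicit, derive a cumulative rearrangement bound $\int_0^{k\delta} F^* \leq 2\int_0^{k\delta} g^*$ via an odd/even disjointness trick, and then split $\|F\|_{Y(0,\infty)}$ at the threshold $\delta$---handling the head by H\"older and the fundamental-function identity, and the tail by the Hardy--Littlewood--P\'olya principle applied to a suitable nonincreasing dominant.

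Setting $F := \sum_{j=1}^M a_j \chi_{I_j}$ with $a_j := \frac{1}{\delta}\int_{I_j\cup I_{j+1}} g$ and letting $a_{(1)}\geq a_{(2)}\geq\cdots$ be the nonincreasing rearrangement of $\{a_j\}$, the pairwise disjointness of the $I_j$ together with their common length $\delta$ gives $F^*(t) = a_{(k)}$ on $((k-1)\delta,k\delta)$. The combinatorial core is this: among any $k$ indices from $\{1,\ldots,M\}$, at least $p\geq\lceil k/2\rceil$ share a common parity, and for same-parity indices the double intervals $E_j:=I_j\cup I_{j+1}$ are pairwise disjoint. Applying this to the top-$k$ indices and summing separately over their $p$ odd-indexed and $q=k-p$ even-indexed subcollections, together with concavity of $v\mapsto\int_0^v g^*$ (automatic from $g^*$ being nonincreasing), yields the cumulative bound $\int_0^{k\delta} F^*(s)\,ds \leq 2\int_0^{k\delta} g^*(s)\,ds$ for every $k$. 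Specialising to $k=1$ gives $a_{(1)}\leq 2 g^{**}(\delta)$, and combining with H\"older~\eqref{prel:Holder} and~\eqref{prel:fund_of_X_times_of_Xasoc} produces the head estimate $\|F^*\chi_{(0,\delta)}\|_{Y(0,\infty)} = a_{(1)}\varphi_Y(\delta) \leq 2(\varphi_Y(\delta)/\varphi_X(\delta))\|g\|_{X(0,\infty)}$.

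For the tail, since $F^*$ is nonincreasing the rearrangement of $F^*\chi_{(\delta,\infty)}$ is the shift $s\mapsto F^*(s+\delta)$, so the quantity to bound is $\|F^*(\cdot+\delta)\|_{Y(0,\infty)}$. Writing $\psi(s):=g^*(s+\delta)$, the cumulative bound gives
\[
\int_0^t F^*(s+\delta)\,ds \;=\; \int_0^{t+\delta} F^* \;-\; \int_0^\delta F^* \;\leq\; 2\int_0^t \psi(u)\,du \;+\; 2\int_0^\delta g^* \qquad (t>0),
\]
where the boundary term is at most $2\delta\|g\|_{X(0,\infty)}/\varphi_X(\delta)$. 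I would then build a nonincreasing dominant $H:=2\psi+c\,\chi_{(0,\delta)}$, with $c$ of the order of $\|g\|_{X(0,\infty)}/\varphi_X(\delta)$, arranged so that $\int_0^t H \geq \int_0^t F^*(\cdot+\delta)$ for every $t$ (on $[\delta,\infty)$ the constant term $c\delta$ absorbs the boundary integral; on $(0,\delta)$ one uses the pointwise bound $F^*(\cdot+\delta)\leq a_{(2)}\leq 2g^{**}(\delta)$). By Hardy--Littlewood--P\'olya, $\|F^*(\cdot+\delta)\|_{Y(0,\infty)}\leq\|H\|_{Y(0,\infty)}\leq 2\|\psi\|_{Y(0,\infty)}+c\varphi_Y(\delta)$; the first summand equals $2\|g^*\chi_{(\delta,\infty)}\|_{Y(0,\infty)}$, which is at most $2\|g\|_{X(0,\infty)}$ times the supremum in the statement (take $f=g/\|g\|_{X(0,\infty)}$), and the second is again of the form $\varphi_Y(\delta)/\varphi_X(\delta)\cdot\|g\|_{X(0,\infty)}$.

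The step I expect to be the main obstacle is matching the sharp constant $2$ in front of $\varphi_Y(\delta)/\varphi_X(\delta)$ in the stated bound: the argument above naturally yields a slightly larger constant there, and bringing it down to $2$ seems to require a more careful construction of $H$---e.g.\ retaining the subtracted term $\int_0^\delta F^* = \delta a_{(1)}$ in the upper bound $\int_0^{t+\delta} F^* - \delta a_{(1)}\leq 2\int_0^{t+\delta} g^* - \delta a_{(1)}$ rather than discarding it, and trading the resulting pointwise deficit against a smaller choice of $c$.
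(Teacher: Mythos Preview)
Your route is genuinely different from the paper's and, with the refinement you gesture at in the last paragraph, it does reach the stated constant~$2$. The paper's argument is shorter and avoids the Hardy--Littlewood--P\'olya machinery entirely: write $\int_{I_j\cup I_{j+1}}g=\int_{I_j}g+\int_{I_{j+1}}g$, observe that the simple functions $\sum_{j=1}^M\chi_{I_j}\frac{1}{\delta}\int_{I_{j+1}}g$ and $\sum_{j=1}^M\chi_{I_{j+1}}\frac{1}{\delta}\int_{I_{j+1}}g$ are equimeasurable (the $I_j$ are disjoint of equal length), and deduce $\|F\|_Y\le 2\|h\|_Y$ where $h=\sum_{j=1}^{M+1}\chi_{I_j}\frac{1}{\delta}\int_{I_j}g$ is the \emph{single}-interval averaging of~$g$. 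One then splits $\|h^*\|_Y$ at $\delta$ exactly as you do; the tail is handled by $\|h^*\chi_{(\delta,\infty)}\|_Y\le\bigl(\sup_{\|f\|_X\le1}\|f^*\chi_{(\delta,\infty)}\|_Y\bigr)\|h\|_X\le\bigl(\sup\ldots\bigr)\|g\|_X$, the last inequality being the classical contractivity of conditional-expectation operators on r.i.\ spaces. This reduction to $h$ is the trick you are missing, and it delivers the factor~$2$ in one stroke.

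Your odd/even disjointness argument yielding $\int_0^{k\delta}F^*\le 2\int_0^{k\delta}g^*$ (hence $F^{**}\le 2g^{**}$ by interpolation and concavity) is a pleasant self-contained alternative. Your worry about the constant is unfounded: take $c'=2g^{**}(\delta)-a_{(1)}\ge0$ and $H'=2\psi+c'\chi_{(0,\delta)}$. For $t\ge\delta$ the domination $\int_0^tF^*(\,\cdot+\delta)\le\int_0^tH'$ is exactly $\int_0^{t+\delta}F^*\le 2\int_0^{t+\delta}g^*$, and for $t<\delta$ it reduces (via $\delta(a_{(1)}+a_{(2)})\le 2\int_0^{2\delta}g^*$) to $\frac{t}{\delta}\int_\delta^{2\delta}g^*\le\int_\delta^{\delta+t}g^*$, which is immediate from the concavity of $v\mapsto\int_\delta^{\delta+v}g^*$. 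Then $\|F\|_Y\le a_{(1)}\varphi_Y(\delta)+\|H'\|_Y\le 2g^{**}(\delta)\varphi_Y(\delta)+2\|\psi\|_Y$, and the two terms give precisely $2\,\varphi_Y(\delta)/\varphi_X(\delta)\cdot\|g\|_X$ and $2\,(\sup\ldots)\|g\|_X$. So both proofs work; the paper's is more direct, while yours produces the stronger intermediate fact $F^{**}\le 2g^{**}$.
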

\begin{proof}
Fix $g\in\Mpl(0, \infty)$ such that $\|g\|_{X(0, \infty)} < \infty$. In particular, $g\in L^1(I_j)$ for every $j\in\{1, \dots, M+1\}$. Since the intervals $\{I_j\}_{j = 1}^{M+1}$ are nonoverlapping and they have the same length, the simple functions $(0,\infty)\ni t \mapsto \sum_{j = 1}^M  \chi_{I_j}(t)\frac1{\delta} \int_{I_{j+1}} g$ and $(0,\infty)\ni t \mapsto \sum_{j = 1}^M  \chi_{I_{j+1}}(t)\frac1{\delta} \int_{I_{j+1}} g$ are equimeasurable. Hence
\begin{align}
\Big\| \sum_{j = 1}^M  \chi_{I_j}(t)\frac1{\delta} \int_{I_j\cup I_{j+1}} g \Big\|_{Y(0, \infty)} &\leq \Big\| \sum_{j = 1}^M  \chi_{I_j}(t)\frac1{\delta} \int_{I_j} g \Big\|_{Y(0, \infty)} + \Big\| \sum_{j = 1}^M  \chi_{I_j}(t)\frac1{\delta} \int_{I_{j+1}} g \Big\|_{Y(0, \infty)} \nonumber\\
&= \Big\| \sum_{j = 1}^M  \chi_{I_j}(t)\frac1{\delta} \int_{I_j} g \Big\|_{Y(0, \infty)} + \Big\| \sum_{j = 1}^M  \chi_{I_{j+1}}(t)\frac1{\delta} \int_{I_{j+1}} g \Big\|_{Y(0, \infty)} \nonumber\\
&\leq 2 \Big\| \sum_{j = 1}^{M + 1}  \chi_{I_j}(t)\frac1{\delta} \int_{I_j} g \Big\|_{Y(0, \infty)}. \label{prop:averaging_lemma:eq:1}
\end{align}
Let $\pi\colon \{1, \dots, M+1\} \to \{1, \dots, M+1\}$ be a permutation such that the finite sequence $\{\int_{I_{\pi(j)}} g\}_{j = 1}^{M+1}$ is nonincreasing.

Next, set
\begin{equation}\label{prop:averaging_lemma:eq:3}
h = \sum_{j = 1}^{M+1}  \chi_{I_j}(t)\frac1{\delta} \int_{I_j} g.
\end{equation}
Using the fact that the intervals are nonoverlaping and have the same length $\delta$ once more, we see that
\begin{equation*}
h^* = \chi_{(0, \delta)} \frac1{\delta} \int_{I_{\pi(1)}} g + \sum_{j = 2}^{M + 1} \chi_{[(j-1)\delta, j\delta)}(t) \frac1{\delta} \int_{I_{\pi(j)}} g.
\end{equation*}
Note that
\begin{align}
\Big\| \sum_{j = 1}^{M + 1}  \chi_{I_j}(t)\frac1{\delta} \int_{I_j} g \Big\|_{Y(0, \infty)} &= \|h\|_{Y(0, \infty)} = \|h^*\|_{Y(0, \infty)} \nonumber\\
&\leq \Big\| \chi_{(0, \delta)} \frac1{\delta} \int_{I_{\pi(1)}} g \Big\|_{Y(0, \infty)} + \|\chi_{(\delta, \infty)}h^*\|_{Y(0, \infty)}. \label{prop:averaging_lemma:eq:4}
\end{align}
As for the first term on the right-hand side, using the H\"older inequality~\eqref{prel:Holder} and \eqref{prel:fund_of_X_times_of_Xasoc}, we have
\begin{align}
\Big\| \chi_{(0, \delta)} \frac1{\delta} \int_{I_{\pi(1)}} g \Big\|_{Y(0, \infty)} &= \frac{\varphi_{Y}(\delta)}{\delta} \int_{I_{\pi(1)}} g \leq \frac{\varphi_{Y}(\delta)}{\delta} \|g\|_{X(0, \infty)} \varphi_{X'}(\delta) \nonumber\\
&= \frac{\varphi_{Y}(\delta)}{\varphi_{X}(\delta)} \|g\|_{X(0, \infty)}. \label{prop:averaging_lemma:eq:5}
\end{align}
As for the second term on the right-hand side of \eqref{prop:averaging_lemma:eq:4}, recalling \eqref{prop:averaging_lemma:eq:3}, we see that
\begin{align*}
\|\chi_{(\delta, \infty)}h^*\|_{Y(0, \infty)} &\leq \Big( \sup_{\|f\|_{X(0, \infty)\leq1}}\|f^*\chi_{(\delta, \infty)}\|_{Y(0, \infty)} \Big) \|h\|_{X(0, \infty)} \\
&= \Big( \sup_{\|f\|_{X(0, \infty)\leq1}}\|f^*\chi_{(\delta, \infty)}\|_{Y(0, \infty)} \Big) \Big\| \sum_{j = 1}^{M+1}  \chi_{I_j}(t)\frac1{\delta} \int_{I_j} g \Big\|_{X(0, \infty)}.
\end{align*}
Since the averaging operator $\M(0, \infty) \ni h \mapsto \sum_{j = 1}^{M+1}  \chi_{I_j}\frac1{\delta} \int_{I_j} h$ is bounded on every \riSp{} over $(0, \infty)$ and its norm is at most $1$ (see~\cite[Chapter~2, Theorem~4.8]{BS}), we have
\begin{equation*}
\Big\| \sum_{j = 1}^{M+1}  \chi_{I_j}(t)\frac1{\delta} \int_{I_j} g \Big\|_{X(0, \infty)} \leq \|g\|_{X(0, \infty)}.
\end{equation*}
Combining the last two inequalities together, we obtain
\begin{equation}\label{prop:averaging_lemma:eq:6}
\|\chi_{(\delta, \infty)}h^*\|_{Y(0, \infty)} \leq \Big( \sup_{\|f\|_{X(0, \infty)\leq1}}\|f^*\chi_{(\delta, \infty)}\|_{Y(0, \infty)} \Big) \|g\|_{X(0, \infty)}.
\end{equation}
Hence, thanks to \eqref{prop:averaging_lemma:eq:4}, \eqref{prop:averaging_lemma:eq:5}, and \eqref{prop:averaging_lemma:eq:6}, we arrive at
\begin{align*}
\Big\| \sum_{j = 1}^M  \chi_{I_j}(t)\frac1{\delta} \int_{I_j\cup I_{j+1}} g \Big\|_{Y(0, \infty)} &\leq \frac{\varphi_{Y}(\delta)}{\varphi_{X}(\delta)} \|g\|_{X(0, \infty)} \\
&\quad + \Big( \sup_{\|f\|_{X(0, \infty)\leq1}}\|f^*\chi_{(\delta, \infty)}\|_{Y(0, \infty)} \Big) \|g\|_{X(0, \infty)},
\end{align*}
whence the claim follows in view of \eqref{prop:averaging_lemma:eq:1}.
\end{proof}

The following simple proposition provides us with a useful inequality between the quantity $\sup_{\|f\|_{X(0, \infty)} \leq 1}\|f^*\chi_{(a, \infty)}\|_{Y(0, \infty)}$ and the ratio of the corresponding fundamental functions.
\begin{proposition}\label{prop:decay_at_infinity_implies_decay_of_fund_func}
Let $X(0, \infty)$ and $Y(0, \infty)$ be \riSps{}. Then
\begin{equation*}
\frac{\varphi_Y(a)}{\varphi_X(a)} \leq 2\sup_{\|f\|_{X(0, \infty)} \leq 1}\|f^*\chi_{(a, \infty)}\|_{Y(0, \infty)} \quad \text{for every $a>0$}.
\end{equation*}
\end{proposition}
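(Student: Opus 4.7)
The plan is to produce a concrete admissible test function in the supremum on the right-hand side whose value is comparable to $\varphi_Y(a)/\varphi_X(a)$. The natural candidate, given the shape of the tail condition, is a suitably normalized characteristic function of an interval $(0, 2a)$, since its nonincreasing rearrangement coincides with itself and already lives on a set that nontrivially extends past $a$.

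More precisely, first I would fix $a > 0$ and set $f = \varphi_X(2a)^{-1} \chi_{(0, 2a)}$. By the definition of the fundamental function we have $\|f\|_{X(0, \infty)} = 1$, so $f$ is a feasible test function in the supremum. Because $f$ is already nonincreasing and nonnegative, $f^* = f$, and therefore
\begin{equation*}
f^*\chi_{(a,\infty)} = \varphi_X(2a)^{-1}\chi_{(a,2a)}.
\end{equation*}
The set $(a, 2a)$ has Lebesgue measure $a$, so by rearrangement invariance $\|\chi_{(a,2a)}\|_{Y(0,\infty)} = \varphi_Y(a)$. Consequently,
\begin{equation*}
\sup_{\|f\|_{X(0, \infty)} \leq 1} \|f^*\chi_{(a,\infty)}\|_{Y(0,\infty)} \geq \frac{\varphi_Y(a)}{\varphi_X(2a)}.
\end{equation*}

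The final step is to replace $\varphi_X(2a)$ by $\varphi_X(a)$. Here I would invoke the quasiconcavity of $\varphi_X$ recalled in the preliminaries, specifically the fact that $t \mapsto \varphi_X(t)/t$ is nonincreasing on $(0,\infty)$. Applying this at $a$ and $2a$ yields $\varphi_X(2a) \leq 2\varphi_X(a)$. Plugging this into the previous lower bound gives
\begin{equation*}
\sup_{\|f\|_{X(0, \infty)} \leq 1} \|f^*\chi_{(a,\infty)}\|_{Y(0,\infty)} \geq \frac{\varphi_Y(a)}{2\varphi_X(a)},
\end{equation*}
which is the claimed inequality after rearrangement. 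There is really no main obstacle: the only mild subtlety is choosing the test interval to have length strictly larger than $a$ so that the restriction to $(a,\infty)$ is nontrivial, and then paying the constant $2$ via quasiconcavity to return to $\varphi_X(a)$.
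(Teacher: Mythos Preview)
Your proof is correct and is essentially the same as the paper's: both use the test function $\chi_{(0,2a)}$ (normalized in $X$) and the quasiconcavity bound $\varphi_X(2a)\le 2\varphi_X(a)$, just presented in a slightly different order.
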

\begin{proof}
Since the function $(0, \infty) \ni a \mapsto \varphi_X(a)/a$ is nonincreasing thanks to the quasiconcavity of the fundamental function of an \riSp{}, we have
\begin{equation*}
\varphi_X(2a) = 2a\frac{\varphi_X(2a)}{2a} \leq 2a \frac{\varphi_X(a)}{a}  = 2 \varphi_X(a) \quad \text{for every $a>0$}.
\end{equation*}
Using this, we see that
\begin{align*}
\frac{\varphi_Y(a)}{\varphi_X(a)} \leq 2 \frac{\varphi_Y(a)}{\varphi_X(2a)} = 2 \frac{\|\chi_{(0,2a)}\chi_{(a, \infty)}\|_{Y(0, \infty)}}{\|\chi_{(0,2a)}\|_{X(0, \infty)}}.
\end{align*}
Since the function $\chi_{(0,2a)}$ is nonincreasing, the claim follows.
\end{proof}

\begin{remark}
The preceding proposition shows that if
\begin{equation}\label{rem:relation_eq}
\lim_{a\to\infty} \sup_{\|f\|_{X(0, \infty)} \leq 1}\|f^*\chi_{(a, \infty)}\|_{Y(0, \infty)} = 0,
\end{equation}
then necessarily
\begin{equation*}
\lim_{a \to \infty} \frac{\varphi_Y(a)}{\varphi_X(a)} = 0.
\end{equation*}
However, the latter is not sufficient for the former in general (see~\cref{prop:ac_near_infinity_LZ_examples,rem:decay_of_fund_not_imply_ac_near_infinity_LZ_example}). Note that the validity of \eqref{rem:relation_eq} can be viewed as a relation between two function spaces. The interested reader is referred to \cite{MP:25preprint, P:25thesis}, where this relation and its generalization were systematically studied.
\end{remark}

We now prove \cref{prop:bounded_set_in_WmrX_tail_is_small}.
\begin{proof}[Proof of \cref{prop:bounded_set_in_WmrX_tail_is_small}]
Let $R>0$ and $u\in\wmrX[1]$. We will show that \eqref{prop:bounded_set_in_WmrX_tail_is_small:quantitative_estimate} is valid with a constant $C>0$, depending only on the dimension $n$. We start with two simple observations. First, we may without loss of generality assume that
\begin{equation}\label{prop:bounded_set_in_WmrX_tail_is_small:eqq:1}
\|u\|_{\wmrX[1]}\leq1.
\end{equation}
Second, we may assume that $\sup_{\|f\|_{\rep{X}}\leq 1} \|f^*\chi_{(R^{n-1},\infty)}\|_{\rep{Y}}<\infty$; otherwise, \eqref{prop:bounded_set_in_WmrX_tail_is_small:quantitative_estimate} trivially holds. Set
\begin{align}
r_0 &= R^{n-1}, \label{prop:bounded_set_in_WmrX_tail_is_small:eqq:3}\\
C_R &= \sup_{\|f\|_{\rep{X}\leq1}}\|f^*\chi_{(r_0, \infty)}\|_{\rep{Y}}, \label{prop:bounded_set_in_WmrX_tail_is_small:eqq:12}
\end{align}
and note that
\begin{equation}\label{prop:bounded_set_in_WmrX_tail_is_small:eqq:2}
\frac{\varphi_Y(r_0)}{\varphi_X(r_0)} + C_R \leq 3C_R
\end{equation}
thanks to \cref{prop:decay_at_infinity_implies_decay_of_fund_func}. Let $\tilde{R} > R$ be arbitrary. For brevity's sake, we also set
\begin{equation*}
a = \omN R^{n} \qquad \text{and} \qquad b = \omN \tilde{R}^n.
\end{equation*}

Since $u\in\wmrX[1]$, there is a locally absolutely continuous function $g\colon (0, \infty) \to \R$ such that
\begin{align*}
u(x) &= g(\omN^\frac1{n} |x|) \quad \text{for a.e.\ $x\in\rn$} \\
\intertext{and}
\nabla u(x) &= \omN^\frac1{n} g'(|x|) \frac{x}{|x|} \quad \text{for a.e.\ $x\in\rn$}.
\end{align*}
Note that
\begin{equation}\label{prop:bounded_set_in_WmrX_tail_is_small:eqq:6}
\|u\chi_{B_{\tilde{R}}\setminus B_R}\|_{Y(\rn)} = \|g(t^\frac1{n})\chi_{(a,b)}(t)\|_{\rep{Y}} 
\end{equation}
thanks to \eqref{prel:representation_measure_preserving_entire_space}. Furthermore, using \eqref{prel:representation_measure_preserving_entire_space} once more together with~\eqref{prop:bounded_set_in_WmrX_tail_is_small:eqq:1}, we see that
\begin{align}
\|g(t^\frac1{n})\|_{\rep{X}} &= \|u\|_{X(\rn)} \leq 1 \label{prop:bounded_set_in_WmrX_tail_is_small:eqq:4}\\
\intertext{and}
\omN^\frac1{n} \|g'(t^\frac1{n})\|_{\rep{X}} &= \|\nabla u\|_{X(\rn)} \leq 1. \label{prop:bounded_set_in_WmrX_tail_is_small:eqq:5}
\end{align}
Next, for every $j\in\N_0$, we define the intervals $I_j$ as
\begin{equation*}
I_j = \big( a + (j-1)r_0, a + jr_0 \big].
\end{equation*}
Note that $|I_j| = r_0$ for every $j\in\N_0$. Now, let $\{\eta_j\}_{j = 1}^\infty \subseteq \C^1_c(\R)$ be a sequence of cutoff functions such that
\begin{equation}\label{prop:bounded_set_in_WmrX_tail_is_small:eqq:7}
0\leq \eta_j \leq 1,\ \eta_j \equiv 1\ \text{in $I_j$},\ \spt \eta_j \subseteq I_{j-1} \cup I_j \cup I_{j+1},\ \text{and}\ |\eta_j'|\leq 2/r_0
\end{equation}
for every $j\in\N$. Fix $M\in\N$ so large that
\begin{equation}\label{prop:bounded_set_in_WmrX_tail_is_small:eqq:8}
(a,b)\subseteq  \bigcup_{j = 1}^{M} I_j.
\end{equation}
Using~\eqref{prop:bounded_set_in_WmrX_tail_is_small:eqq:6}, \eqref{prop:bounded_set_in_WmrX_tail_is_small:eqq:8}, and \eqref{prop:bounded_set_in_WmrX_tail_is_small:eqq:7}, we obtain
\begin{align}
\|u\chi_{B_{\tilde{R}}\setminus B_R}\|_{Y(\rn)} &= \|g(t^\frac1{n})\chi_{(a,b)}(t)\|_{\rep{Y}} \leq\Big\| \sum_{j = 1}^M g(t^\frac1{n}) \chi_{I_j}(t) \Big\|_{\rep{Y}} \nonumber\\
&= \Big\| \sum_{j = 1}^M g(t^\frac1{n}) \eta_j(t) \chi_{I_j}(t) \Big\|_{\rep{Y}} \nonumber\\
&\leq \frac1{n}\Big\| \sum_{j = 1}^M \chi_{I_j}(t) \int_{t}^{a + (j+1)r_0} |g'(s^\frac1{n})| s^{-1 + \frac1{n}} \eta_j(s) \dd s \Big\|_{\rep{Y}} \nonumber\\
&\quad+ \Big\| \sum_{j = 1}^M \chi_{I_j}(t) \int_{t}^{a + (j+1)r_0} |g(s^\frac1{n})\eta_j'(s)| \dd s \Big\|_{\rep{Y}} \nonumber\\
&\leq \frac1{n} \Big\| \sum_{j = 1}^M \chi_{I_j}(t) \int_{a + (j-1)r_0}^{a + (j+1)r_0} |g'(s^\frac1{n})| s^{-1 + \frac1{n}} \dd s \Big\|_{\rep{Y}} \nonumber\\
&\quad+ \Big\| \sum_{j = 1}^M \chi_{I_j}(t) \int_{a + (j-1)r_0}^{a + (j+1)r_0} |g(s^\frac1{n})\eta_j'(s)| \dd s \Big\|_{\rep{Y}}. \label{prop:bounded_set_in_WmrX_tail_is_small:eqq:9}
\end{align}
As for the first term, we have
\begin{align*}
&\Big\| \sum_{j = 1}^M \chi_{I_j}(t) \int_{a + (j-1)r_0}^{a + (j+1)r_0} |g'(s^\frac1{n})| s^{-1 + \frac1{n}} \dd s \Big\|_{\rep{Y}} \\
&\leq \Big\| \sum_{j = 1}^M \chi_{I_j}(t) a^{-1 + \frac1{n}}\int_{a + (j-1)r_0}^{a + (j+1)r_0} |g'(s^\frac1{n})| \dd s \Big\|_{\rep{Y}} \\
&\leq \omega_n^{\frac{1-n}{n}} \Big\| \sum_{j = 1}^M \chi_{I_j}(t) \frac1{r_0}\int_{a + (j-1)r_0}^{a + (j+1)r_0} |g'(s^\frac1{n})| \dd s \Big\|_{\rep{Y}}
\end{align*}
thanks to \eqref{prop:bounded_set_in_WmrX_tail_is_small:eqq:3}. Moreover, using \cref{prop:averaging_lemma}, \eqref{prop:bounded_set_in_WmrX_tail_is_small:eqq:2}, and \eqref{prop:bounded_set_in_WmrX_tail_is_small:eqq:5}, we obtain
\begin{equation*}
\omega_n^{\frac{1-n}{n}}\Big\| \sum_{j = 1}^M \chi_{I_j}(t) \frac1{r_0}\int_{a + (j-1)r_0}^{a + (j+1)r_0} |g'(s^\frac1{n})| \dd s \Big\|_{\rep{Y}} \leq \frac{6}{\omega_n}C_R.
\end{equation*}
Hence
\begin{equation}\label{prop:bounded_set_in_WmrX_tail_is_small:eqq:10}
\Big\| \sum_{j = 1}^M \chi_{I_j}(t) \int_{a + (j-1)r_0}^{a + (j+1)r_0} |g'(s^\frac1{n})| s^{-1 + \frac1{n}} \dd s \Big\|_{\rep{Y}} \leq \frac{6}{\omega_n}C_R.
\end{equation}
As for the second term in~\eqref{prop:bounded_set_in_WmrX_tail_is_small:eqq:9}, we have
\begin{align}
&\Big\| \sum_{j = 1}^M \chi_{I_j}(t) \int_{a + (j-1)r_0}^{a + (j+1)r_0} |g(s^\frac1{n}) \eta_j'(s)| \dd s \Big\|_{\rep{Y}} \nonumber\\
&\leq \Big\| \sum_{j = 1}^M \chi_{I_j}(t) \frac{2}{r_0}\int_{a + (j-1)r_0}^{a + (j+1)r_0} |g(s^\frac1{n})| \dd s \Big\|_{\rep{Y}} \nonumber \\
&\leq 12C_R \label{prop:bounded_set_in_WmrX_tail_is_small:eqq:11}
\end{align}
thanks to \eqref{prop:bounded_set_in_WmrX_tail_is_small:eqq:7}, \cref{prop:averaging_lemma}, \eqref{prop:bounded_set_in_WmrX_tail_is_small:eqq:2}, and \eqref{prop:bounded_set_in_WmrX_tail_is_small:eqq:4}. Therefore, combining \eqref{prop:bounded_set_in_WmrX_tail_is_small:eqq:9} with \eqref{prop:bounded_set_in_WmrX_tail_is_small:eqq:10} and \eqref{prop:bounded_set_in_WmrX_tail_is_small:eqq:11}, we arrive at
\begin{equation}\label{prop:bounded_set_in_WmrX_tail_is_small:eqq:13}
\|u\chi_{B_{\tilde{R}}\setminus B_R}\|_{Y(\rn)} \leq \Big( \frac{6}{n\omega_n} + 12 \Big)C_R.
\end{equation}
Letting $\tilde{R}$ go to $\infty$ in \eqref{prop:bounded_set_in_WmrX_tail_is_small:eqq:13}, we obtain
\begin{equation*}
\|u\chi_{\rn\setminus B_R}\|_{Y(\rn)} \leq \Big( \frac{6}{n\omega_n} + 12 \Big)C_R.
\end{equation*}
At last, this combined with \eqref{prop:bounded_set_in_WmrX_tail_is_small:eqq:12} yields \eqref{prop:bounded_set_in_WmrX_tail_is_small:quantitative_estimate}. 

Finally, having already established \eqref{prop:bounded_set_in_WmrX_tail_is_small:quantitative_estimate}, we immediately see that \eqref{prop:bounded_set_in_WmrX_tail_is_small:globally_ac} implies \eqref{prop:bounded_set_in_WmrX_tail_is_small:tail_vanishes}, which finishes the proof.
\end{proof}

We conclude this section with a proposition, probably of independent interest, which will play an important role in our proof of the necessity of the condition \eqref{thm:characterization_of_comp_emb_Y:eq:globally_ac} for the compactness of \eqref{thm:characterization_of_comp_emb_Y:eq:embedding}.
\begin{proposition}\label{prop:construction_of_radially_symmetric_function_vanishing_at_infinity}
Let $m,n\in\N$, $n\geq2$. Assume that $f$ is a nonnegative nonincreasing simple function in $(0, \infty)$ with bounded support. 

For every $a\geq1$, there is a nonnegative radially symmetric function $u_{f,a}$ in $\rn$ such that
\begin{align}
&u_{f,a}\in\wmrX \cap \C^{m-1}(\rn) \quad \text{for every \riSp{} $X(\rn)$}, \nonumber\\
&u_{f,a}(x) = 0 \quad \text{for every $x\in\rn$ with $\omega_n|x|^n < \frac{a}{8}$}, \label{prop:construction_of_radially_symmetric_function_vanishing_at_infinity:eq:function_vanishes_as_a_increases}\\
&\|u_{f, a}\|_{\wmrX} \leq C_1 \|f\|_{\rep{X}}, \label{prop:construction_of_radially_symmetric_function_vanishing_at_infinity:eq:sobolev_norm_bounded_by_norm_of_f}\\
\intertext{and}
&\|\chi_{(a, \infty)}f\|_{\rep{Y}} \leq C_2 \|u_{f,a}\|_{Y(\rn)} \quad \text{for every \riSp{} $Y(\rn)$}. \label{prop:construction_of_radially_symmetric_function_vanishing_at_infinity:eq:tail_of_f_bounded_by_norm_of_u_fa}
\end{align}
Here $C_1$ and $C_2$ are constants depending only on $m$ and $n$. In particular, they are independent of $f$, $a$, and the \riSps{}.
\end{proposition}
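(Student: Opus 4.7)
If $f$ vanishes on $(a, \infty)$ then $\chi_{(a, \infty)} f \equiv 0$ and $u_{f, a} \equiv 0$ works trivially. Assume otherwise. The plan is to construct
\[
u_{f, a}(x) = h(\omN |x|^n), \qquad x \in \rn,
\]
where $h \colon [0, \infty) \to [0, \infty)$ is a smooth function vanishing on $[0, a/8]$ that, on $[a/2, \infty)$, coincides with an appropriately right-mollified copy of $f$, and smoothly interpolates from $0$ to $h(a/2)$ on $[a/8, a/2]$. Since $h$ vanishes near $0$, $u_{f,a}$ is identically zero in a neighborhood of the origin, so $u_{f, a} \in C^\infty(\rn) \cap \wmrX$ for every \riSp{} $X(\rn)$, and the support condition \eqref{prop:construction_of_radially_symmetric_function_vanishing_at_infinity:eq:function_vanishes_as_a_increases} holds. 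Via the measure-preserving transformation \eqref{prel:representation_measure_preserving_entire_space}, $\|u_{f, a}\|_{Z(\rn)} = \|h\|_{\rep{Z}}$ for every \riSp{} $Z$. A right-shifted mollifier applied to the nonincreasing $f$ yields $f \leq h \leq f(\,\cdot\, - \varepsilon)$ on $[a/2, \infty)$, hence $h \geq f$ on $(a, \infty)$ and $\|h\|_{\rep{X}} \leq \|f\|_{\rep{X}}$. These give \eqref{prop:construction_of_radially_symmetric_function_vanishing_at_infinity:eq:tail_of_f_bounded_by_norm_of_u_fa} with $C_2 = 1$ and the $X$-part of \eqref{prop:construction_of_radially_symmetric_function_vanishing_at_infinity:eq:sobolev_norm_bounded_by_norm_of_f}.

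The heart of the proof is the estimate of $\|\nabla^k u_{f, a}\|_{X(\rn)}$ for $1 \leq k \leq m$. By Faà di Bruno applied to $u_{f, a} = h \circ \phi$ with $\phi(x) = \omN |x|^n$, one has
\[
|\nabla^k u_{f, a}(x)| \leq C_{k, n} \sum_{j = 1}^{k} |h^{(j)}(\omN |x|^n)| \, |x|^{n j - k},
\]
and in the volume coordinate $s = \omN |x|^n$ the factor $|x|^{n j - k}$ becomes a dimensional constant times $s^{j - k/n}$. The crucial idea is to mollify each jump of $f$ located at $s_i$ with the position-adapted width $\varepsilon_i = c_n\, s_i^{(n-1)/n}$, which is possible precisely because $n \geq 2$ makes the exponent $(n-1)/n$ strictly positive. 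With this choice, $|h^{(j)}(s)| \leq C_j\, h(s)/s^{j(n-1)/n}$ on $[a/2, \infty)$, so each summand in Faà di Bruno obeys
\[
|h^{(j)}(s)|\, s^{j - k/n} \leq C_j\, h(s)\, s^{(j - k)/n} \leq C_{k, n}\, h(s),
\]
using $s \geq a/8 \geq 1/8$ and $|j - k| \leq k$. Summing over $j$ yields the pointwise bound $|\nabla^k u_{f, a}(x)| \leq C_{k, n}\, h(\omN |x|^n)$, and consequently $\|\nabla^k u_{f, a}\|_{X(\rn)} \leq C_{k, n}\|h\|_{\rep{X}} \leq C_{k, n}\|f\|_{\rep{X}}$. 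The interpolation region $\{\omN |x|^n \in [a/8, a/2]\}$ is treated separately: there $|h^{(j)}| \leq C_j\, h(a/2)/a^j$, so its contribution to $\|\nabla^k u_{f, a}\|_{X(\rn)}$ is at most $C_{k, n}\, h(a/2)\, \fundX(a)$, which is bounded by $C_{k, n} \|f\|_{\rep{X}}$ since $f \geq h(a/2)\, \chi_{(0, a/2)}$ and $\fundX$ is quasiconcave.

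\textbf{Main obstacle.} The most delicate step is the explicit construction of the profile $h$ with the adaptive derivative bound $|h^{(j)}(s)| \lesssim h(s)/s^{j(n-1)/n}$, which requires mollifying the finitely many jumps of $f$ with \emph{different} scales $\varepsilon_i \sim s_i^{(n-1)/n}$ depending on position. A convenient implementation is to partition $[a/2, \infty)$ into the dyadic pieces $\{2^\ell a/2 \leq s \leq 2^{\ell + 1} a/2\}$ and apply a one-sided mollifier with essentially constant scale on each piece, gluing across pieces by a smooth partition of unity; the interpolation on $[a/8, a/2]$ must additionally be arranged so that $h \leq h(a/2)$ throughout, preventing any spurious amplification of $\|h\|_{\rep{X}}$ beyond a dimensional multiple of $\|f\|_{\rep{X}}$.
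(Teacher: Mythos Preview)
Your approach is genuinely different from the paper's and is in principle workable, but there is a concrete error in your key pointwise estimate. You claim $|h^{(j)}(s)| \leq C_j\, h(s)/s^{j(n-1)/n}$ on $[a/2,\infty)$, but this fails wherever $h$ is small compared to the nearby jump size. Concretely, take $f=\chi_{(0,b)}$ with $b>a$; after mollifying the single jump with width $\varepsilon\sim b^{(n-1)/n}$, one has $h(s)\to 0$ as $s\uparrow b+\varepsilon$ while $|h'(s)|\sim 1/\varepsilon$ at interior points of the transition, so the ratio $|h'(s)|/h(s)$ is unbounded. The correct bound is $|h^{(j)}(s)|\lesssim \tilde f(s)/s^{j(n-1)/n}$ with, say, $\tilde f(s)=f(s/2)$: in a transition region near a jump at $b_i$ one has $|h^{(j)}(s)|\lesssim \gamma_i/\varepsilon_i^j\approx \gamma_i/s^{j(n-1)/n}$, and since $s/2<b_i$ (provided $\varepsilon_i<b_i$, which holds once $c_n$ is small), $f(s/2)\geq f(b_i^-)\geq\gamma_i$. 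Then the Faà di Bruno argument gives $|\nabla^k u_{f,a}(x)|\lesssim f\big(\tfrac{1}{2}\omega_n|x|^n\big)$, and the boundedness of the dilation $D_2$ on $\rep{X}$ yields $\|\nabla^k u_{f,a}\|_{X(\rn)}\lesssim\|f\|_{\rep{X}}$. The same dilation fix is needed for your claim $\|h\|_{\rep{X}}\leq\|f\|_{\rep{X}}$, since with position-dependent $\varepsilon$ the inequality $h\leq f(\cdot-\varepsilon)$ only gives $h(s)\leq f(s/2)$, hence $\|h\|_{\rep{X}}\leq 2\|f\|_{\rep{X}}$ rather than constant $1$.

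For comparison, the paper avoids the adaptive mollification altogether: it works in the radial coordinate $t=\omega_n^{1/n}|x|$, sets $g=\sum_j(\gamma_j/b_j^{m/n})\chi_{(0,b_j^{1/n})}$, takes the $m$-fold iterated antiderivative $v_m(t)=\int_t^\infty\frac{g(\tau)(\tau-t)^{m-1}}{(m-1)!}\dd\tau$, and defines $u_{f,a}(x)=\eta_a(\omega_n^{1/n}|x|)\,v_m(\omega_n^{1/n}|x|)$ with a single cutoff. The derivatives of $v_m$ are then exactly $\pm v_{m-k}$, and all estimates reduce to bounding $\int_t^\infty g(\tau)\tau^{m-1}\dd\tau$, which is immediate from the explicit form of $g$. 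Your route trades this algebraic simplicity for a conceptually more transparent ``just smooth $f$'' picture, at the price of the dyadic-mollification bookkeeping you identify as the main obstacle; both are legitimate, but the antiderivative construction is shorter and sidesteps the pointwise issue above.
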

\begin{proof}
Throughout the entire proof, $C>0$ is an universal absolute constant that may depend only on $m$ and $n$.

Since $(a/4)^{1/n}-(a/8)^{1/n} = (4^{-1/n}-8^{-1/n})a^{1/n} \geq (4^{-1/n}-8^{-1/n})$ for every $a\geq1$, we can find a collection $\{\eta_a\}_{a\geq1}$ of cutoff functions such that $\eta_a\in\C_0^\infty(0, \infty)$, $0\leq \eta_a \leq 1$, $\eta_a \equiv 0$ in $(0, (a/8)^{1/n})$, $\eta_a\equiv 1$ in $((a/4)^{1/n}, \infty)$, and $|\eta_a^{(k)}(t)|\leq C$ for every $t\in(0, \infty)$ and every $k=1, \dots, m$ with a constant $C$ independent of $a$. Let $\{\phi_a\}_{a\geq1}$ be defined as
\begin{equation*}
\phi_a(x) = \eta_a(\omega_n^\frac{1}{n}|x|),\ x\in\rn.
\end{equation*}
It is easy to see that $\phi_a\in\C_0^\infty(\rn)$ and
\begin{equation}\label{prop:construction_of_radially_symmetric_function_vanishing_at_infinity:eq:6}
|D^m \phi_a(x)|\leq C \quad \text{for every $x\in\rn$}.
\end{equation}
Note that the function $\tilde{f} = f(a)\chi_{(0,a)} + f\chi_{[a, \infty)}$ is a nonnegative nonincreasing simple function in $(0, \infty)$ with bounded support that is constant in $(0,a)$. Clearly, $\|\chi_{(a,\infty)}f\|_{\rep{Y}} = \|\chi_{(a,\infty)}\tilde{f}\|_{\rep{Y}}$ and $\|\tilde f\|_{\rep{X}}\leq \|f\|_{\rep{X}}$. Therefore, we may assume that $f$ is constant in the interval $(0,a)$. Hence, it can be expressed as
\begin{equation}\label{prop:construction_of_radially_symmetric_function_vanishing_at_infinity:eq:f_expressed_as_sum}
f = \sum_{j = 1}^M \gamma_j \chi_{(0, b_j)}
\end{equation}
for some $M\in\N$, $\gamma_j >0$, and $a\leq b_1 < b_2 < \dots < b_M < \infty$. Set
\begin{equation}\label{prop:construction_of_radially_symmetric_function_vanishing_at_infinity:eq:g_definition}
g = \sum_{j = 1}^M \frac{\gamma_j}{b_j^{m/n}}\chi_{(0, b_j^{1/n})}.
\end{equation}
Next, we inductively define the functions $v_0, v_1, \dots, v_m\colon (0, \infty) \to [0, \infty)$ as
\begin{equation*}
v_j(t) = (T^jg)(t),\ t\in(0, \infty),\ j\in\{0, 1, \dots, m\},
\end{equation*}
where $T$ is an auxiliary operator, mapping $\Mpl(0, \infty)$ to $\Mpl(0, \infty)$, defined as
\begin{equation*}
(Th)(t) = \int_t^\infty h(s) \dd s,\ t\in(0, \infty),\ h\in\Mpl(0, \infty).
\end{equation*}
The expression $T^j$ is to be understood as $T^j = \overbrace{T\circ \dots \circ T}^{\text{$j$-times}}$. Thanks to the Fubini theorem, for every $j = 1, \dots, m$, we have
\begin{equation}\label{prop:construction_of_radially_symmetric_function_vanishing_at_infinity:eq:v_j_after_fubini}
v_j(t) = \int_t^\infty \frac{g(\tau)(\tau - t)^{j-1}}{(j-1)!} \dd\tau \quad \text{for all $t\in(0, \infty)$ and $j = 1, \dots, m$}.
\end{equation}
Hence, it is easy to see that
\begin{equation}\label{prop:construction_of_radially_symmetric_function_vanishing_at_infinity:eq:5}
v_j(t) \leq C \int_{t}^\infty g(\tau) \tau^{m-1} \dd\tau \quad \text{for all $j\in\{1, \dots, m\}$ and $t\geq\frac1{8^{1/n}}$}.
\end{equation}

Now, we define the function $u_{f,a}$ as
\begin{equation}\label{prop:construction_of_radially_symmetric_function_vanishing_at_infinity:eq:def_of_u_fa}
u_{f,a}(x) = \eta_a(\omega_n^\frac1{n}|x|) v_m(\omega_n^\frac1{n}|x|),\ x\in\rn.
\end{equation}
Clearly, we have $u(x) = 0$ for every $x\in\rn$ with $\omega_n|x|^n < a/8$. Furthermore, it is easy to see that $u_{f,a}\in\C^{m-1}(\rn)$ and its $m$th order weak derivatives exist in $\rn$. Next, we claim that \eqref{prop:construction_of_radially_symmetric_function_vanishing_at_infinity:eq:tail_of_f_bounded_by_norm_of_u_fa} is true. To this end, thanks to \eqref{prop:construction_of_radially_symmetric_function_vanishing_at_infinity:eq:def_of_u_fa} together with \eqref{prop:construction_of_radially_symmetric_function_vanishing_at_infinity:eq:v_j_after_fubini} with $j=m$, we have
\begin{align}
\|u_{f,a}\|_{Y(\rn)} &= \Big\| \eta_a(\omega_n^\frac1{n}|x|)\int_{\omega_n^\frac1{n}|x|}^\infty \frac{g(\tau)(\tau - \omega_n^\frac1{n}|x|)^{m-1}}{(m-1)!} \dd\tau \Big\|_{Y(\rn)} \nonumber\\
&= \Big\| \eta_a(t^\frac1{n})\int_{t^{\frac1{n}}}^\infty \frac{g(\tau)(\tau - t^{\frac1{n}})^{m-1}}{(m-1)!} \dd\tau\Big\|_{\rep{Y}} \nonumber\\
&\geq \Big\| \chi_{(a, \infty)}(4t)\int_{(2t)^{\frac1{n}}}^\infty \frac{g(\tau)(\tau - t^{\frac1{n}})^{m-1}}{(m-1)!} \dd\tau\Big\|_{\rep{Y}} \nonumber\\
&\geq \Big\| \chi_{(a, \infty)}(4t)\int_{(2t)^{\frac1{n}}}^\infty \frac{g(\tau)(\tau - \frac{\tau}{2^{1/n}})^{m-1}}{(m-1)!} \dd\tau\Big\|_{\rep{Y}} \nonumber\\
&= \frac{(1 - 2^{-1/n})^{m-1}}{(m-1)!}\Big\| \chi_{(a, \infty)}(4t)\int_{(2t)^{\frac1{n}}}^\infty g(\tau) \tau^{m-1} \dd\tau\Big\|_{\rep{Y}}. \label{prop:construction_of_radially_symmetric_function_vanishing_at_infinity:eq:1}
\end{align}
Furthermore, recalling \eqref{prop:construction_of_radially_symmetric_function_vanishing_at_infinity:eq:g_definition} and \eqref{prop:construction_of_radially_symmetric_function_vanishing_at_infinity:eq:f_expressed_as_sum}, we obtain
\begin{align}
\Big\| \chi_{(a, \infty)}(4t)\int_{(2t)^{\frac1{n}}}^\infty g(\tau) \tau^{m-1} \dd\tau\Big\|_{\rep{Y}} &= \Big\| \chi_{(a, \infty)}(4t)\sum_{j = 1}^M \frac{\gamma_j}{b_j^{m/n}} \int_{(2t)^{\frac1{n}}}^\infty \chi_{(0, b_j^{1/n})}(\tau) \tau^{m-1} \dd\tau\Big\|_{\rep{Y}} \nonumber\\
&= \Big\| \chi_{(a, \infty)}(4t)\sum_{j = 1}^M \frac{\gamma_j}{b_j^{m/n}} \chi_{(0, b_j/2)}(t) \int_{(2t)^{\frac1{n}}}^{b_j^{\frac1{n}}}\tau^{m-1} \dd\tau\Big\|_{\rep{Y}} \nonumber\\
&\geq \Big\| \chi_{(a, \infty)}(4t)\sum_{j = 1}^M \frac{\gamma_j}{b_j^{m/n}} \chi_{(0, b_j/4)}(t) \int_{(\frac{b_j}{2})^\frac1{n}}^{b_j^{\frac1{n}}}\tau^{m-1} \dd\tau\Big\|_{\rep{Y}} \nonumber\\
&= \frac{1 - 2^{-\frac{m}{n}}}{m} \Big\| \chi_{(a, \infty)}(4t)\sum_{j = 1}^M \gamma_j \chi_{(0, b_j/4)}(t) \Big\|_{\rep{Y}} \nonumber\\
&= \frac{1 - 2^{-\frac{m}{n}}}{m} \Big\| \chi_{(a, \infty)}(4t)f(4t) \Big\|_{\rep{Y}}. \label{prop:construction_of_radially_symmetric_function_vanishing_at_infinity:eq:2}
\end{align}
Since the norm of the dilation operator $D_4$ on $\rep{Y}$ is at most~$4$ (see~\eqref{prel:dilation_bounded}), we have
\begin{equation}
\|\chi_{(a, \infty)}f\|_{\rep{Y}} \leq 4 \|\chi_{(a, \infty)}(4t)f(4t)\|_{\rep{Y}}. \label{prop:construction_of_radially_symmetric_function_vanishing_at_infinity:eq:3}
\end{equation}
Hence, combining \eqref{prop:construction_of_radially_symmetric_function_vanishing_at_infinity:eq:1}, \eqref{prop:construction_of_radially_symmetric_function_vanishing_at_infinity:eq:2}, and \eqref{prop:construction_of_radially_symmetric_function_vanishing_at_infinity:eq:3}, we obtain \eqref{prop:construction_of_radially_symmetric_function_vanishing_at_infinity:eq:tail_of_f_bounded_by_norm_of_u_fa}.

Finally, we prove \eqref{prop:construction_of_radially_symmetric_function_vanishing_at_infinity:eq:sobolev_norm_bounded_by_norm_of_f}, which will conclude the proof. We claim that
\begin{equation}\label{prop:construction_of_radially_symmetric_function_vanishing_at_infinity:eq:7}
\|u_{f,a}\|_{X(\rn)} \leq \frac1{m!} \|f\|_{\rep{X}}.
\end{equation}
To this end, using \eqref{prop:construction_of_radially_symmetric_function_vanishing_at_infinity:eq:g_definition}, we see that
\begin{align}
\Big\| \int_{t^\frac1{n}}^\infty g(\tau)\tau^{m-1} \dd\tau \Big\|_{\rep{X}} &\leq \Big\| \sum_{j = 1}^M \frac{\gamma_j}{b_j^{m/n}} \chi_{(0, b_j)}(t) \int_0^{b_j^\frac1{n}} \tau^{m-1} \dd\tau \Big\|_{\rep{X}} \nonumber\\
&= \frac1{m}\Big\| \sum_{j = 1}^M \frac{\gamma_j}{b_j^{m/n}} \chi_{(0, b_j)}(t) b_j^\frac{m}{n} \Big\|_{\rep{X}} = \frac1{m}\|f\|_{\rep{X}}. \label{prop:construction_of_radially_symmetric_function_vanishing_at_infinity:eq:8}
\end{align}
Furthermore, since \eqref{prop:construction_of_radially_symmetric_function_vanishing_at_infinity:eq:def_of_u_fa} together with \eqref{prop:construction_of_radially_symmetric_function_vanishing_at_infinity:eq:v_j_after_fubini} with $j=m$ clearly implies
\begin{align*}
\|u_{f,a}\|_{X(\rn)} &\leq \frac1{(m-1)!}\Big\| \int_{\omega_n^\frac1{n}|x|}^\infty g(\tau)\tau^{m-1} \dd\tau \Big\|_{X(\rn)} \\
&= \frac1{(m-1)!}\Big\| \int_{t^\frac1{n}}^\infty g(\tau)\tau^{m-1} \dd\tau \Big\|_{\rep{X}} ,
\end{align*}
we obtain \eqref{prop:construction_of_radially_symmetric_function_vanishing_at_infinity:eq:7}.
Next, we claim that
\begin{equation}\label{prop:construction_of_radially_symmetric_function_vanishing_at_infinity:eq:4}
|D^m u_{f,a}(x)| \leq C \Big( \int_{\omega_n^\frac1{n}|x|}^\infty g(\tau) \tau^{m-1} \dd\tau + g(\omega_n^\frac1{n}|x|) \Big) \quad \text{for a.e.\ $x\in\rn$}.
\end{equation}
It can be verified by straightforward computations that all the (weak) derivatives of the function $u_{f,a}$ up to order $m$ are linear combinations of (some of) the functions
\begin{equation*}
\rn\ni x \mapsto \eta_a^{(i)}(\omega_n^\frac1{n}|x|)\frac{x_{k_1} \cdots x_{k_j}}{|x|^{2l - 1}} v_{m-k}(\omega_n^\frac1{n}|x|)
\end{equation*}
with
\begin{equation*}
i,k\in\{0, \dots, m\},\ l\in\{1, \dots, m\},\ \text{and}\ j\in\{0, \dots, 2l - 1\}.
\end{equation*}
Moreover, the coefficients and the number of terms in the linear combinations depend only on $m$ and $n$. Now, the desired inequality \eqref{prop:construction_of_radially_symmetric_function_vanishing_at_infinity:eq:4} will follow from this, \eqref{prop:construction_of_radially_symmetric_function_vanishing_at_infinity:eq:6}, and \eqref{prop:construction_of_radially_symmetric_function_vanishing_at_infinity:eq:5} once we make two observations. On the one hand, since the functions $\eta_a^{(i)}$, $i=0,\dots, m$, vanish in the interval $(0, 8^{-1/n}]\subseteq (0, (a/8)^{1/n}]$, the inequality \eqref{prop:construction_of_radially_symmetric_function_vanishing_at_infinity:eq:4} trivially holds for a.e.~$x\in\rn$ with $\omega_n^{1/n}|x|\leq 8^{-1/n}$. On the other hand, for all $l\in\{1, \dots, m\}$ and $j\in\{0, \dots, 2l - 1\}$,
\begin{equation*}
\Big| \frac{x_{k_1} \cdots x_{k_j}}{|x|^{2l - 1}} \Big| \leq C |x|^{1 - 2l} \leq C \quad \text{for every $x\in\rn$ with $\omega_n^{1/n}|x| > 8^{-\frac1{n}}$}.
\end{equation*}
Therefore, the inequality \eqref{prop:construction_of_radially_symmetric_function_vanishing_at_infinity:eq:4} is indeed valid. At last, combining \eqref{prop:construction_of_radially_symmetric_function_vanishing_at_infinity:eq:4} with \eqref{prop:construction_of_radially_symmetric_function_vanishing_at_infinity:eq:8}, we obtain
\begin{equation*}
\|D^m u_{f,a}\|_{X(\rn)} \leq C (\|f\|_{\rep{X}} + \|g(t^\frac1{n})\|_{\rep{X}}).
\end{equation*}
Furthermore, using \eqref{prop:construction_of_radially_symmetric_function_vanishing_at_infinity:eq:g_definition} together with the fact that $b_j^{m/n}\geq a^{m/n} \geq 1$ for every $j\in\{1, \dots, M\}$, and \eqref{prop:construction_of_radially_symmetric_function_vanishing_at_infinity:eq:f_expressed_as_sum}, we see that
\begin{equation*}
\|g(t^\frac1{n})\|_{\rep{X}} \leq \Big\| \sum_{j = 1}^M \gamma_j \chi_{(0, b_j^{1/n})}(t^\frac1{n})\Big\|_{\rep{X}} = \Big\| \sum_{j = 1}^M \gamma_j \chi_{(0, b_j)}(t)\Big\|_{\rep{X}} = \|f\|_{\rep{X}}.
\end{equation*}
Hence, we have
\begin{equation*}
\|D^m u_{f,a}\|_{X(\rn)} \leq C \|f\|_{\rep{X}}.
\end{equation*}
Combining this and \eqref{prop:construction_of_radially_symmetric_function_vanishing_at_infinity:eq:7}, we see that \eqref{prop:construction_of_radially_symmetric_function_vanishing_at_infinity:eq:sobolev_norm_bounded_by_norm_of_f} is true, which concludes the proof.
\end{proof}

%%%%%%%%%%%%%%%%%%%%%%%%%%%%%%%%%%%%%%%%%%%%%%%%%%%%%%%%%%%%%%%%%%%%%%%%%%%%%%%%%%%%%%%%%%%%%%%%%%%%%%%%%%%%%%%%%%%%

\section{Weighted embeddings on balls}\label{sec:weighted_Sobolev_on_balls}
In this section, we investigate the weighted Sobolev embedding
\begin{equation}\label{E:embedding_on_balls_Sob_emb}
		\wmrXB \hookrightarrow Y(B_R, \aldx),
\end{equation}
where $\aldx$ is the weighted measure $\dd \aldx(x) = |x|^\alpha \dd x$, where $\alpha \in[0, \infty)$. Our first goal is to characterize the optimal target \riSp{} in \eqref{E:embedding_on_balls_Sob_emb}, that is, the smallest \riSp{} $Y(B_R, \aldx)$ with which the embedding is valid. We then use our description of the optimal target space to characterize when the embedding is compact. Results from this section will also be useful in our proof of \cref{thm:characterization_of_comp_emb_Y}.

\subsection{Optimal target space in the weighted Sobolev embedding}
The following theorem is a so-called reduction principle for the weighted Sobolev embedding~\eqref{E:embedding_on_balls_Sob_emb}. It characterizes \eqref{E:embedding_on_balls_Sob_emb} by means of an inequality involving an integral operator (sometimes called a Hardy operator or a Copson operator) and functions of one variable (see~\eqref{thm:embedding_on_balls_characterization:eq:Hardy} below). By (K\"othe) dualizing this inequality as in \cite{KP:06}, where the optimal \riSps{} in classical Sobolev embeddings on bounded Lipschitz domains in $\rn$ are characterized, we can describe the optimal \riSp{} in~\eqref{E:embedding_on_balls_Sob_emb}.
\begin{theorem}\label{thm:embedding_on_balls_characterization}
Let $m,n\in\N$, $n\geq2$. Let $\alpha\geq0$ and $R\in(0, \infty)$. For all \riSps{} $X(B_R)$ and $Y(B_R,\aldx)$, the following two statements are equivalent.
\begin{enumerate}[label=(\arabic*), ref=(\arabic*)]
	\item There is a constant $C_{B,S}>0$ such that
	\begin{equation}\label{thm:embedding_on_balls_characterization:eq:Sob}
		\|u\|_{Y(B_R,\aldx)} \leq C_{B,S} \|u\|_{\wmrXB} \quad \text{for every $u\in\wmrXB$}.
	\end{equation}
	\item There is a constant $C_{B,H}>0$ such that
	\begin{equation}\label{thm:embedding_on_balls_characterization:eq:Hardy}
		\Big\| \int_{t^{\frac{n}{n+\alpha}}}^1 f(s) s^{-1 + \frac{m}{n}} \dd s \Big\|_{\repFin{Y}} \leq C_{B,H} \|f\|_{\repFin{X}} \quad \text{for every $f\in\Mpl(0,1)$}.
	\end{equation}
\end{enumerate}
\end{theorem}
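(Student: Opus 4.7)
The strategy is the classical reduction-principle approach: translate the weighted Sobolev inequality for radial functions on $B_R$ into a one-dimensional inequality on $(0,1)$ via the measure-preserving transformation~\eqref{prel:representation_measure_preserving_ball} on the target side and its unweighted analogue (with $\alpha=0$) on the domain side. The bridge between $|u(x)|$ and the Hardy-type integral in~\eqref{thm:embedding_on_balls_characterization:eq:Hardy} will be a pointwise radial estimate derived from Taylor's formula applied to the one-dimensional profile $g$, combined with the pointwise comparison~\eqref{prel:ineq_between_norm_of_gradient_and_derivative_of_the_one_var_func}.

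For the sufficiency $(2)\Rightarrow(1)$, I would take $u\in\wmrXB$, write $u(x)=g(|x|)$ with $g^{(j)}\in\aclocInt{(0,R]}$ for $j=0,\dots,m-1$, and apply Taylor's formula with integral remainder to $g$ at an arbitrary point $\rho\in(R/2,R)$. Averaging this identity in $\rho$ over $(R/2,R)$ and using the H\"older inequality~\eqref{prel:Holder}, the embedding $X(B_R)\hookrightarrow L^1(B_R)$ from~\eqref{prel:Linfty_emb_X_emb_L1_finite_measure}, and~\eqref{prel:ineq_between_norm_of_gradient_and_derivative_of_the_one_var_func} to absorb the boundary polynomial into a constant multiple of $\|u\|_{\wmrXB}$, together with the crude bound $(s-r)^{m-1}\le s^{m-1}$, produces the pointwise radial estimate
\begin{equation*}
|g(r)| \le C_R\,\|u\|_{\wmrXB} + C_m\int_r^R |g^{(m)}(s)|\,s^{m-1}\dd s, \qquad r\in(0,R).
\end{equation*}
The substitution $s=R\sigma^{1/n}$ recasts this last integral as $C_{m,n,R}\int_{(r/R)^n}^1 |g^{(m)}(R\sigma^{1/n})|\,\sigma^{-1+m/n}\dd\sigma$. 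Passing to the coordinate $t=(|x|/R)^{n+\alpha}$, so that $t^{n/(n+\alpha)}=(|x|/R)^n$, taking the $Y(B_R,\aldx)$-norm via~\eqref{prel:representation_measure_preserving_ball}, and applying hypothesis~(2) to $f(\sigma)=|g^{(m)}(R\sigma^{1/n})|$ delivers $\|u\|_{Y(B_R,\aldx)}\le C\|u\|_{\wmrXB}$; here the unweighted analogue of~\eqref{prel:representation_measure_preserving_ball} identifies $\|f\|_{\repFin{X}}$ with $\||g^{(m)}(|\cdot|)|\|_{X(B_R)}$, which is dominated by $\||\nabla^m u|\|_{X(B_R)}$ thanks to~\eqref{prel:ineq_between_norm_of_gradient_and_derivative_of_the_one_var_func}.

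For the converse $(1)\Rightarrow(2)$, I would test~(1) against a family of carefully constructed radial functions. By rearrangement invariance and a monotone convergence argument, it suffices to prove the inequality for $f\ge0$ nonincreasing, bounded, and simple. Imitating the proof of~\cref{prop:construction_of_radially_symmetric_function_vanishing_at_infinity}, I set $h(s)=f((s/R)^n)$, define $v_m(r)=\frac{1}{(m-1)!}\int_r^R(s-r)^{m-1}h(s)\dd s$, multiply by a smooth cutoff $\eta\in\C_c^\infty(0,R)$ that vanishes near the origin and equals one on $(\varepsilon_0 R,R/2)$ with derivatives bounded by constants depending only on $m,n,R$, and set $u(x)=\eta(|x|)v_m(|x|)\in\wmrXB$. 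A bounded-ball version of the pointwise bound~\eqref{prop:construction_of_radially_symmetric_function_vanishing_at_infinity:eq:4} then gives $\|u\|_{\wmrXB}\le C\|f\|_{\repFin{X}}$. Using the reverse inequality $(s-r)^{m-1}\ge 2^{1-m}s^{m-1}$ for $s\ge 2r$ together with~\eqref{prel:representation_measure_preserving_ball} and the dilation boundedness~\eqref{prel:dilation_bounded}, I extract the lower bound $\|u\|_{Y(B_R,\aldx)}\ge c\bigl\|\int_{t^{n/(n+\alpha)}}^1 f(\sigma)\sigma^{-1+m/n}\dd\sigma\bigr\|_{\repFin{Y}}$; rescaling $f$ so that its support lies inside the plateau of $\eta$ absorbs the loss coming from the cutoff. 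Feeding $u$ into~(1) then yields~(2).

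The main obstacle I expect is the control of the lower-order radial derivatives in the test-function construction: $|\nabla^m u|$ on the ball is not bounded pointwise by $|g^{(m)}|$ alone, but also by terms of the shape $|g^{(j)}(|x|)|/|x|^{m-j}$ with $j<m$, which must be absorbed uniformly into $C\|f\|_{\repFin{X}}$ across all r.i.\ norms without extra assumptions on $X$. The pointwise estimate~\eqref{prop:construction_of_radially_symmetric_function_vanishing_at_infinity:eq:4} in~\cref{prop:construction_of_radially_symmetric_function_vanishing_at_infinity} was designed precisely for this, and it transfers to the bounded-ball setting essentially unchanged provided the cutoff is placed away from the origin. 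A secondary, bookkeeping-level point is aligning the two changes of variable $\sigma=(s/R)^n$ (domain side) and $t=(|x|/R)^{n+\alpha}$ (target side) so that the lower limit $t^{n/(n+\alpha)}$ in~\eqref{thm:embedding_on_balls_characterization:eq:Hardy} emerges naturally from the substitutions, with all factors of $R$, $n$, $m$, and $\alpha$ tracked carefully.
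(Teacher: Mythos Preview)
Your plan for $(2)\Rightarrow(1)$ is essentially the paper's argument: Taylor's formula with integral remainder, averaging in the base point over $(R/2,R)$ to absorb the boundary polynomial, the substitution $s=R\sigma^{1/n}$, and then~\eqref{prel:representation_measure_preserving_ball} and~\eqref{prel:ineq_between_norm_of_gradient_and_derivative_of_the_one_var_func}. That direction is fine.

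The gap is in $(1)\Rightarrow(2)$. You place the cutoff $\eta$ so that it \emph{vanishes near the origin}. But the Hardy operator $t\mapsto\int_{t^{n/(n+\alpha)}}^1 f(s)s^{-1+m/n}\dd s$ is nonincreasing and its critical behaviour sits at $t\to0^+$; if $u=\eta(|x|)v_m(|x|)$ is zero on a neighbourhood of the origin, then $\|u\|_{Y(B_R,\aldx)}$ only sees the Hardy integral restricted to $t\ge\varepsilon_0^{\,n+\alpha}$, and your claimed lower bound $\|u\|_{Y(B_R,\aldx)}\ge c\,\|\int_{t^{n/(n+\alpha)}}^1 f(\sigma)\sigma^{-1+m/n}\dd\sigma\|_{\repFin{Y}}$ simply fails. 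The fix you propose---``rescaling $f$ so that its support lies inside the plateau''---does not help: for $f$ supported away from $0$ the Hardy integral is bounded and~\eqref{thm:embedding_on_balls_characterization:eq:Hardy} is trivial, so you would only be proving the easy cases; and letting $\varepsilon_0\to0$ blows up the derivatives of $\eta$ and hence the constant in $\|u\|_{\wmrXB}\le C\|f\|_{\repFin{X}}$. You are transplanting the construction from \cref{prop:construction_of_radially_symmetric_function_vanishing_at_infinity}, where the cutoff near the origin is natural because the target behaviour is at infinity; on the ball the geometry is reversed.

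The paper's test functions (\cref{prop:construction_of_radially_symmetric_function_local_necessity_Hardy}) carry \emph{no} cutoff at the origin: one sets $u_{f,R,a}(x)=g_m(|x|)$ with $g_m$ the $m$-fold iterated integral of $f_m(t)=f(t)t^{-m+m/n}$, so that $u$ is supported in a ball $B_r$, $r<R$, and is fully alive at the origin. The price is that the lower-order terms $|x|^{jn-k}g_{m-j}(|x|)$ now genuinely have negative powers of $|x|$, and these are controlled not by a cutoff but by the interpolation estimate of \cref{prop:interpolation_of_operators_auxiliary} for the operators $T_{\alpha,\beta}f(t)=t^\beta\int_t^1|f(s)|s^{-1+\alpha}\dd s$, applied with $\alpha=-j+m/n<0$ and $\beta=j-k/n$. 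This is exactly where the restriction $m<n$ enters (for $m\ge n$ both statements hold trivially). That interpolation step is the missing idea in your proposal.
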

Before we prove our reduction principle for \eqref{E:embedding_on_balls_Sob_emb}, we need to establish a couple of auxiliary propositions. But first, we use it to describe the optimal target \riSp{} in \eqref{E:embedding_on_balls_Sob_emb}.

\begin{corollary}\label{cor:embedding_on_balls_optimal_target}
Let $m,n\in\N$, $n\geq2$. Let $\alpha\geq0$ and $R\in(0, \infty)$. For an \riSp{} $X(B_R, \aldx)$, let $Y_{X}(B_R, \aldx)$ be the \riSp{} whose function norm is defined as
\begin{equation*}
\|u\|_{Y_X(B_R, \aldx)} = \|u^*(\aldx(B_R)t)\|_{\repFin{Y_X}},\ u\in\Mpl(B_R, \aldx),
\end{equation*}
where $\|\cdot\|_{\repFin{Y_X}}$ is the function norm satisfying
\begin{equation}\label{cor:embedding_on_balls_optimal_target:eq_asoc_norm_def}
\|g\|_{\repAsocFin{Y_X}} = \|t^{\frac{m+\alpha}{n}}g^{**}(t^\frac{n+\alpha}{n})\|_{\repAsocFin{X}} \quad \text{for every $g\in\Mpl(0, 1)$}.
\end{equation}
Then $Y_{X}(B_R, \aldx)$ is optimal in the embedding \eqref{E:embedding_on_balls_Sob_emb} in the following sense. The embedding \eqref{E:embedding_on_balls_Sob_emb} is valid with $Y=Y_{X}$ and if $Z(B_R, \aldx)$ is an \riSp{} such that \eqref{E:embedding_on_balls_Sob_emb} is valid with $Y=Z$, then $Y_{X}(B_R, \aldx) \hookrightarrow Z(B_R, \aldx)$.
\end{corollary}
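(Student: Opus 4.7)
The plan is to apply the reduction principle \cref{thm:embedding_on_balls_characterization}, which translates the embedding \eqref{E:embedding_on_balls_Sob_emb} into the one-variable Hardy-type inequality \eqref{thm:embedding_on_balls_characterization:eq:Hardy}, and then to identify the smallest admissible target \riSp{} by K\"othe duality, in the spirit of \cite{KP:06}. The formula \eqref{cor:embedding_on_balls_optimal_target:eq_asoc_norm_def} is precisely what emerges from this computation.

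First I would check that \eqref{cor:embedding_on_balls_optimal_target:eq_asoc_norm_def} defines a genuine \riNorm{} on $(0,1)$. Rearrangement invariance is automatic, since the right-hand side depends on $g$ only through $g^{**}$, and hence only through $g^*$. Positivity, homogeneity, monotonicity, and the Fatou property follow routinely from the corresponding properties of $\|\cdot\|_{\repAsocFin{X}}$ together with the monotonicity and monotone-convergence properties of $g\mapsto g^{**}$. The triangle inequality rests on the sublinearity $(g_1+g_2)^{**}\leq g_1^{**}+g_2^{**}$ of the maximal operator. The remaining two axioms (P4) and (P5) reduce, via $g=\chi_{(0,s)}$ and by direct inspection, to estimates of bounded functions on $(0,1)$. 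Consequently $\|\cdot\|_{\repFin{Y_X}}$, defined as the associate norm of \eqref{cor:embedding_on_balls_optimal_target:eq_asoc_norm_def} via \eqref{prel:associate_norm}, is automatically an \riNorm{}, and the double-associate identity \eqref{prel:double_associate_norm} recovers \eqref{cor:embedding_on_balls_optimal_target:eq_asoc_norm_def} from $\|\cdot\|_{\repFin{Y_X}}$.

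Next I would verify the Hardy inequality \eqref{thm:embedding_on_balls_characterization:eq:Hardy} for $Y=Y_X$. Given $f\in\Mpl(0,1)$, expand the left-hand side via \eqref{prel:double_associate_norm_nonincreasing_functions} as a supremum over $\|g\|_{\repAsocFin{Y_X}}\leq 1$; then swap the order of integration in $\int_0^1 g^*(t)\int_{t^{n/(n+\alpha)}}^1 f^*(s) s^{-1+m/n}\dd s\dd t$ and change variable $t=s^{(n+\alpha)/n}$, converting the inner double integral into $\int_0^1 f^*(s) s^{(m+\alpha)/n} g^{**}(s^{(n+\alpha)/n})\dd s$. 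One application of \eqref{prel:Holder} in $\repAsocFin{X}$, combined with \eqref{cor:embedding_on_balls_optimal_target:eq_asoc_norm_def}, then bounds this by $\|f\|_{\repFin{X}}\|g\|_{\repAsocFin{Y_X}}$, proving \eqref{thm:embedding_on_balls_characterization:eq:Hardy} for $Y_X$ with constant $1$. By \cref{thm:embedding_on_balls_characterization}, the embedding \eqref{E:embedding_on_balls_Sob_emb} holds with $Y=Y_X$. For optimality, suppose $Z(B_R,\aldx)$ is an \riSp{} for which \eqref{E:embedding_on_balls_Sob_emb} holds. By \cref{thm:embedding_on_balls_characterization}, the Hardy inequality \eqref{thm:embedding_on_balls_characterization:eq:Hardy} is satisfied with $Y=Z$. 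Pairing both sides against an arbitrary $g\in\Mpl(0,1)$ via \eqref{prel:Holder} and running exactly the same Fubini and change-of-variables computation yields $\|t^{(m+\alpha)/n} g^{**}(t^{(n+\alpha)/n})\|_{\repAsocFin{X}} \leq C \|g\|_{\repAsocFin{Z}}$; that is, $\repAsocFin{Z} \hookrightarrow \repAsocFin{Y_X}$, hence $Z'(B_R,\aldx) \hookrightarrow Y_X'(B_R,\aldx)$. An appeal to \eqref{prel:embedding_iff_asoc_embedding} then delivers $Y_X(B_R,\aldx)\hookrightarrow Z(B_R,\aldx)$.

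The main obstacle I anticipate is the careful handling of the monotone-function duality: because the Hardy operator preserves the cone of nonincreasing functions, its optimal target norm on this cone is naturally governed by $g^{**}$ rather than $g^*$, and one must be attentive when applying \eqref{prel:double_associate_norm_nonincreasing_functions} and interchanging integrals to arrive at precisely the weight $t^{(m+\alpha)/n}$ and the precise composition $g^{**}(t^{(n+\alpha)/n})$ appearing in \eqref{cor:embedding_on_balls_optimal_target:eq_asoc_norm_def}. Once this identification is secured, the axiom verification for $\|\cdot\|_{\repAsocFin{Y_X}}$ and the optimality step proceed in a standard fashion.
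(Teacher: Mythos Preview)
Your proposal is correct and follows essentially the same route as the paper: both apply \cref{thm:embedding_on_balls_characterization}, dualize the Hardy inequality via \eqref{prel:double_associate_norm_nonincreasing_functions} and Fubini to obtain the identity $\sup_{\|f\|_{\repFin{X}}\leq1}\|\text{Hardy}(f)\|_{Z}=\sup_{\|g\|_{Z'}\leq1}\|t^{(m+\alpha)/n}g^{**}(t^{(n+\alpha)/n})\|_{\repAsocFin{X}}$, and then read off both the validity of the embedding for $Y_X$ and its optimality from this identity together with \eqref{prel:embedding_iff_asoc_embedding}. One small slip: in your Fubini step the integrand should be $f(s)$, not $f^*(s)$ (the Hardy operator acts on $f$, and it is $\text{Hardy}(f)$, not $f$, that is nonincreasing), and no change of variable is needed---the factor $s^{(m+\alpha)/n}$ arises simply from $s^{-1+m/n}\cdot s^{(n+\alpha)/n}$.
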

\begin{proof}
It can be easily verified that the functional $\|\cdot\|_{\repAsocFin{Y_X}}$ defined by \eqref{cor:embedding_on_balls_optimal_target:eq_asoc_norm_def} is an \riNorm{} on $(0,1)$ (cf.~\cite[Proposition~3.1]{M:23}). For every \riSp{} $Z(0,1)$, we have
\begin{align}
\sup_{\|f\|_{\repFin{X}}\leq1} \Big\| \int_{t^{\frac{n}{n+\alpha}}}^1 |f(s)| s^{-1 + \frac{m}{n}} \dd s \Big\|_{Z(0,1)} &= \sup_{\substack{\|f\|_{\repFin{X}}\leq1\\ \|g\|_{Z'(0,1)}\leq1}} \int_0^1 g^*(t) \Big( \int_{t^{\frac{n}{n+\alpha}}}^1 |f(s)| s^{-1 + \frac{m}{n}} \dd s \Big) \dd t \nonumber\\
&=\sup_{\substack{\|f\|_{\repFin{X}}\leq1\\ \|g\|_{Z'(0,1)}\leq1}} \int_0^1 |f(s)| s^{-1 + \frac{m}{n}} \Big( \int_0^{s^\frac{n + \alpha}{n}} g^*(t) \dd t \Big) \dd s \nonumber\\
&=\sup_{\substack{\|f\|_{\repFin{X}}\leq1\\ \|g\|_{Z'(0,1)}\leq1}} \int_0^1 |f(s)| s^{\frac{m + \alpha}{n}} g^{**}(s^\frac{n + \alpha}{n}) \dd s \nonumber\\
&=\sup_{\|g\|_{Z'(0,1)}\leq1} \|t^{\frac{m + \alpha}{n}} g^{**}(t^\frac{n + \alpha}{n})\|_{\repAsocFin{X}} \label{cor:embedding_on_balls_optimal_target:eq:1}
\end{align}
thanks to \eqref{prel:double_associate_norm_nonincreasing_functions}, the Fubini theorem, and \eqref{prel:associate_norm}. Now, on the one hand, using \eqref{cor:embedding_on_balls_optimal_target:eq:1} with $Z(0,1) = \repFin{Y_X}$, we obtain
\begin{align*}
\sup_{\|f\|_{\repFin{X}}\leq1} \Big\| \int_{t^{\frac{n}{n+\alpha}}}^1 |f(s)| s^{-1 + \frac{m}{n}} \dd s \Big\|_{\repFin{Y_X}} &= \sup_{\|g\|_{\repAsocFin{Y_X}}\leq1} \|t^{\frac{m + \alpha}{n}} g^{**}(t^\frac{n + \alpha}{n})\|_{\repAsocFin{X}} \\
&= \sup_{\|g\|_{\repAsocFin{Y_X}}\leq1} \|g\|_{\repAsocFin{Y_X}} = 1.
\end{align*}
Hence, the Sobolev embedding $\wmrX\hookrightarrow Y_X(B_R, \aldx)$ is valid thanks to \cref{thm:embedding_on_balls_characterization}. On the other hand, if $\wmrX\hookrightarrow Y(B_R, \aldx)$ is valid for an \riSp{} $Y(B_R, \aldx)$, then \cref{thm:embedding_on_balls_characterization} and \eqref{cor:embedding_on_balls_optimal_target:eq:1} with $Z(0,1) = \repFin{Y}$ imply that there is a constant $C\in(0, \infty)$ such that
\begin{equation*}
\sup_{\|g\|_{\repAsocFin{Y}}\leq1} \|g\|_{\repAsocFin{Y_X}} = \sup_{\|g\|_{\repAsocFin{Y}}\leq1} \|t^{\frac{m + \alpha}{n}} g^{**}(t^\frac{n + \alpha}{n})\|_{\repAsocFin{X}} \leq C.
\end{equation*}
In other words, $\repAsocFin{Y} \hookrightarrow \repAsocFin{Y_X}$. It follows that $Y_X(B_R, \aldx) \hookrightarrow Y(B_R, \aldx)$ thanks to \eqref{prel:embedding_iff_asoc_embedding}, which finishes the proof.
\end{proof}

\begin{remark}\label{rem:embedding_on_balls_into_Linfty}
Note that
\begin{equation*}
\wmrXB \hookrightarrow L^\infty(B_R, \aldx)
\end{equation*}
if and only if
\begin{equation*}
\sup_{\|f\|_{\repFin{X}\leq1}} \int_0^1 f(s) s^{-1 + \frac{m}{n}} \dd s < \infty
\end{equation*}
thanks to \cref{thm:embedding_on_balls_characterization}. It is easy to see that this is the case if and only if either $m<n$ and $\repFin{X} \hookrightarrow L^{\frac{n}{m},1}(0,1)$ or $m\geq n$. Since the Lebesgue measure and the weighted measure $\aldx$ are absolutely continuous with respect to each other, it follows that $L^\infty(B_R,\aldx) = L^\infty(B_R)$. In other words, the weight is completely immaterial as far as embeddings into $L^\infty$ are concerned.
\end{remark}

We now establish two auxiliary propositions and then finally prove \cref{thm:embedding_on_balls_characterization}.
\begin{proposition}\label{prop:interpolation_of_operators_auxiliary}
For $\alpha\neq0$ and $\beta\geq \max\{0, -\alpha\}$, define the operator $T_{\alpha, \beta}$, mapping $\M(0, 1)$ to $\Mpl(0,1)$, as
\begin{equation*}
T_{\alpha, \beta}f(t) = t^{\beta}\int_t^1 |f(s)| s^{-1+\alpha} \dd s,\ t\in(0, 1),\ f\in\M(0, 1).
\end{equation*}
Let $X(0, 1)$ be an \riSp{}. Then
\begin{equation*}
\|T_{\alpha, \beta}f\|_{X(0, 1)} \leq \max\Big\{ \frac1{|\alpha|}, \frac1{\beta + 1} \Big\} \|f\|_{X(0, 1)} \quad \text{for every $f\in\M(0, 1)$}.
\end{equation*}
\end{proposition}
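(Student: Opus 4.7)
The plan is to verify endpoint boundedness of $T_{\alpha,\beta}$ on $L^1(0,1)$ and $L^\infty(0,1)$ with the two explicit constants $1/(\beta+1)$ and $1/|\alpha|$ respectively, and then invoke Calderón's interpolation theorem (e.g.\ \cite[Chapter~3, Theorem~2.12 and Corollary~2.13]{BS}): since $f\mapsto T_{\alpha,\beta}f$ is linear on $\Mpl(0,1)$ (and depends only on $|f|$), if both endpoint estimates hold with constants $M_1$ and $M_\infty$, then $(T_{\alpha,\beta}f)^{**}(t)\leq \max\{M_1,M_\infty\}f^{**}(t)$ for every $t\in(0,1)$, whence the Hardy--Littlewood--P\'olya principle yields the desired inequality on every \riSp{} $X(0,1)$ with the constant $\max\{M_1,M_\infty\}=\max\{1/|\alpha|,1/(\beta+1)\}$.

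For the $L^\infty$-bound, I would estimate pointwise. Assuming $f\geq0$ and $\|f\|_{L^\infty(0,1)}\leq1$, one has $T_{\alpha,\beta}f(t)\leq t^\beta\int_t^1 s^{-1+\alpha}\dd s$. Splitting cases: when $\alpha>0$, $\int_t^1 s^{-1+\alpha}\dd s=(1-t^\alpha)/\alpha\leq 1/\alpha$ and $t^\beta\leq 1$ because $\beta\geq0$, giving $T_{\alpha,\beta}f(t)\leq 1/\alpha$; when $\alpha<0$, $\int_t^1 s^{-1+\alpha}\dd s=(t^\alpha-1)/|\alpha|\leq t^\alpha/|\alpha|$, so $t^\beta\int_t^1 s^{-1+\alpha}\dd s\leq t^{\alpha+\beta}/|\alpha|\leq 1/|\alpha|$, where the last inequality uses the assumption $\beta\geq -\alpha$. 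In both cases $\|T_{\alpha,\beta}f\|_{L^\infty(0,1)}\leq \|f\|_{L^\infty(0,1)}/|\alpha|$.

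For the $L^1$-bound, I would apply the Fubini theorem:
\begin{equation*}
\int_0^1 T_{\alpha,\beta}f(t)\dd t=\int_0^1 |f(s)|s^{-1+\alpha}\int_0^s t^\beta\dd t\dd s=\frac{1}{\beta+1}\int_0^1 |f(s)|s^{\alpha+\beta}\dd s\leq\frac{1}{\beta+1}\|f\|_{L^1(0,1)},
\end{equation*}
where $s^{\alpha+\beta}\leq 1$ on $(0,1)$ follows again from $\alpha+\beta\geq 0$. Combining the two endpoint estimates through the interpolation step outlined above yields the claim.

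The only conceptual subtlety is recognizing that the two hypotheses on $\beta$ play distinct roles: $\beta\geq 0$ is needed only in the $L^\infty$ estimate for $\alpha>0$ (to control $t^\beta\leq 1$), whereas $\beta\geq -\alpha$ is needed both in the $L^\infty$ estimate for $\alpha<0$ and in the $L^1$ estimate (to control $s^{\alpha+\beta}\leq 1$). Apart from this bookkeeping, the argument is mechanical; I expect no real obstacle.
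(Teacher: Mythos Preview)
Your proposal is correct and follows essentially the same approach as the paper: both reduce to the $L^1$ and $L^\infty$ endpoint estimates via Calder\'on's interpolation theorem, and both verify those endpoints by the same case split on the sign of $\alpha$ for $L^\infty$ and by Fubini for $L^1$. The only cosmetic difference is that the paper cites \cite[Chapter~3, Theorem~2.2]{BS} directly rather than phrasing the interpolation step through the $f^{**}$ inequality, but the substance is identical.
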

\begin{proof}
By Calder\'on's interpolation result \cite[Theorem~2]{C:66} (see also~\cite[Chapter~3, Theorem~2.2]{BS}), in order to prove the boundedness of $T_{\alpha, \beta}$ on $X(0, 1)$ with the operator norm at most $\max\{ 1/|\alpha|, 1/(\beta + 1) \}$, it is sufficient to show that
\begin{align*}
\|T_{\alpha, \beta}f\|_{L^\infty(0, 1)} \leq \frac1{|\alpha|} \|f\|_{L^\infty(0, 1)} \quad \text{for every $f\in L^\infty(0, 1)$} \\
\intertext{and}
\|T_{\alpha, \beta}f\|_{L^1(0, 1)} \leq \frac1{\beta + 1}  \|f\|_{L^1(0, 1)} \quad \text{for every $f\in L^1(0, 1)$}.
\end{align*}
These estimates are, however, easy to establish. If $\alpha < 0$, using the fact that $-1+\alpha<-1$ and $\alpha+\beta\geq0$, we see that
\begin{align*}
\|T_{\alpha, \beta}f\|_{L^\infty(0, 1)} &\leq \|f\|_{L^\infty(0, 1)} \sup_{t\in(0, 1)} t^\beta \int_t^1 s^{-1+\alpha} \dd s \\
&\leq \|f\|_{L^\infty(0, 1)} \frac1{|\alpha|} \sup_{t\in(0, 1)} t^{\alpha + \beta} \\
&= \frac1{|\alpha|} \|f\|_{L^\infty(0, 1)}.
\end{align*}
If $\alpha>0$, we have
\begin{align*}
\|T_{\alpha, \beta}f\|_{L^\infty(0, 1)} &\leq \|f\|_{L^\infty(0, 1)} \sup_{t\in(0, 1)} t^\beta \int_0^1 s^{-1+\alpha} \dd s \\
&= \frac1{\alpha} \|f\|_{L^\infty(0, 1)},
\end{align*}
this time using the fact that $-1+\alpha>-1$ and $\beta\geq0$.
Finally, using the Fubini theorem together with the fact that $\beta \geq \max\{0,-\alpha\} > -1$,  we have
\begin{align*}
\|T_{\alpha, \beta}f\|_{L^1(0, 1)} &= \int_0^1 |f(s)| s^{-1+\alpha} \int_0^s t^{\beta} \dd t \dd s \\
&\leq \frac1{\beta + 1} \sup_{s\in(0, 1)} s^{\beta + \alpha} \|f\|_{L^1(0, 1)} \\
&= \frac1{\beta + 1}  \|f\|_{L^1(0, 1)},
\end{align*}
whether $\alpha$ is positive or negative.
\end{proof}

\begin{proposition}\label{prop:construction_of_radially_symmetric_function_local_necessity_Hardy}
Let $m,n\in\N$, $1\leq m < n$. Let $R\in(0, \infty)$ and $\alpha\in[0, \infty)$. There are positive constants $C_3$ and $C_4$ such that for all \riSps{} $X(B_R)$ and $Y(B_R,\aldx)$, the following is true. For every $a\in(0, 1]$ and every nonnegative $f\in L^\infty(0, 1)$ with support inside $[0, a)$, there is a nonnegative radially symmetric and nonincreasing function $u_{f,R,a}$ such that:
\begin{align}
&u_{f,R,a}\in\wmrXB \cap \C^{m-1}(B_R\setminus\{0\}) , \nonumber\\
&\text{$u_{f,R,a}$ has support inside $B_r$, where $r=a^\frac1{n}R$}, \nonumber\\
&\|u_{f,R,a}\|_{\wmrXB} \leq C_3 \max\{1, R^{-m}\} \|f\|_{\repFin{X}}, \label{prop:construction_of_radially_symmetric_function_local_necessity_Hardy:eq:sobolev_norm_bounded_by_norm_of_f}\\
\intertext{and}
&\Big\| \int_{t^\frac{n}{n+\alpha}}^1 f(s) s^{-1+\frac{m}{n}} \dd s \Big\|_{\repFin{Y}} \leq C_4 \|u_{f,R,a}\|_{Y(B_R,\aldx)}. \label{prop:construction_of_radially_symmetric_function_local_necessity_Hardy:eq:Hardy_bounded_by_u}
\end{align}
Moreover, the constant $C_3$ depends only on $m$ and $n$, and the constant $C_4$ depends only on $\alpha$, $m$, and $n$.
\end{proposition}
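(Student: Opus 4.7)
My plan is to construct $u_{f,R,a}$ as the composition of a one-variable $m$-fold iterated integral with the measure-preserving transformation $x\mapsto(|x|/R)^n$, designed so that both the Sobolev estimate and the Hardy estimate close simultaneously. By a standard density argument I may assume that $f$ is nonnegative, nonincreasing and simple, with support in $[0,a)$. I then set
\begin{equation*}
  g(\tau) = f(\tau)\,\tau^{\frac{m}{n} - m},\ \tau\in(0,1),
\end{equation*}
extended by $0$ outside $(0,a)$, and define
\begin{equation*}
  v_m(\rho) = \frac{1}{(m-1)!}\int_\rho^1 g(s)(s-\rho)^{m-1}\dd s,\ \rho\geq 0.
\end{equation*}
Standard differentiation under the integral sign yields $v_m^{(k)}(\rho) = \frac{(-1)^k}{(m-k-1)!}\int_\rho^1 g(s)(s-\rho)^{m-k-1}\dd s$ for $0\leq k\leq m-1$ and $v_m^{(m)}(\rho) = (-1)^m g(\rho)$ distributionally, so $v_m$ is nonnegative, nonincreasing, supported in $[0,a]$, and of class $C^{m-1}((0,\infty))$. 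The function
\begin{equation*}
  u_{f,R,a}(x) = v_m\!\bigl((|x|/R)^n\bigr),\ x\in B_R,
\end{equation*}
is then automatically radial, nonincreasing, supported in $B_r$ with $r=a^{1/n}R$, and in $C^{m-1}(B_R\setminus\{0\})$, since the map $x\mapsto(|x|/R)^n$ is $C^\infty$ off the origin.

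For the Sobolev estimate \eqref{prop:construction_of_radially_symmetric_function_local_necessity_Hardy:eq:sobolev_norm_bounded_by_norm_of_f}, the measure-preserving transformation from \eqref{prel:representation_measure_preserving_ball} (with $\alpha=0$) gives $\|u_{f,R,a}\|_{X(B_R)} = \|v_m\|_{\repFin{X}}$, and the elementary bound $(s-\rho)^{m-1}\leq s^{m-1}$ produces $v_m(\rho) \leq \frac{1}{(m-1)!}T_{m/n,0}f(\rho)$, so \cref{prop:interpolation_of_operators_auxiliary} yields $\|u_{f,R,a}\|_{X(B_R)} \leq C\|f\|_{\repFin{X}}$. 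For the top-order term I use a Faà di Bruno calculation with $|D^k((|x|/R)^n)|\lesssim |x|^{n-k}/R^n$ (valid since $k\leq m<n$) to derive the pointwise bound
\begin{equation*}
  |D^m u_{f,R,a}(x)| \leq C_{m,n}\,R^{-m}\sum_{k=1}^m |v_m^{(k)}(t)|\,t^{k-m/n},\quad t=(|x|/R)^n,
\end{equation*}
where each factor $t^{k-m/n}$ is bounded thanks to $k\geq 1>m/n$. The $k=m$ term is then pointwise equal to $R^{-m}\,f(t)$ by the very choice of weight in $g$: $|v_m^{(m)}(t)|\,t^{m-m/n}=f(t)\,t^{-m+m/n}\,t^{m-m/n}=f(t)$. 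For $1\leq k\leq m-1$, the bound $(s-t)^{m-k-1}\leq s^{m-k-1}$ gives $|v_m^{(k)}(t)|\leq C\,T_{m/n-k,0}f(t)$, whence $|v_m^{(k)}(t)|\,t^{k-m/n}\leq C\,T_{m/n-k,\,k-m/n}f(t)$. Crucially, the parameters $\alpha=m/n-k$ and $\beta=k-m/n$ of this operator satisfy the critical identity $\beta=-\alpha=\max\{0,-\alpha\}$, so \cref{prop:interpolation_of_operators_auxiliary} applies and bounds each term by $C\|f\|_{\repFin{X}}$; summing gives $\|D^m u_{f,R,a}\|_{X(B_R)}\leq C_{m,n}R^{-m}\|f\|_{\repFin{X}}$, and \eqref{prop:construction_of_radially_symmetric_function_local_necessity_Hardy:eq:sobolev_norm_bounded_by_norm_of_f} follows with $C_3$ depending only on $m$ and $n$.

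For the Hardy estimate \eqref{prop:construction_of_radially_symmetric_function_local_necessity_Hardy:eq:Hardy_bounded_by_u}, the transformation from \eqref{prel:representation_measure_preserving_ball} identifies $\|u_{f,R,a}\|_{Y(B_R,\aldx)}$ with $\|v_m(t^{n/(n+\alpha)})\|_{\repFin{Y}}$. Writing $\rho=t^{n/(n+\alpha)}$ and splitting the Hardy integral at $2\rho$, the far part $\int_{2\rho}^1 f(s)s^{-1+m/n}\dd s$ is dominated by $C\,v_m(\rho)$ via the lower bound coming from $(s-\rho)^{m-1}\geq(s/2)^{m-1}$ on $\{s\geq 2\rho\}$ together with the identity $g(s)s^{m-1}=f(s)s^{-1+m/n}$, while the near part $\int_\rho^{2\rho}f(s)s^{-1+m/n}\dd s$ is pointwise bounded by $C\,f(\rho)\rho^{m/n}$ using monotonicity of $f$. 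To handle the near part in norm I exploit the complementary lower bound $v_m(\rho)\geq C\,f(2\rho)\,\rho^{m/n}$, obtained by restricting the defining integral of $v_m$ to $s\in(3\rho/2,2\rho)$ where $(s-\rho)^{m-1}\geq(\rho/2)^{m-1}$ and $s^{-m+m/n}\geq(2\rho)^{-m+m/n}$, and then noting that $t\mapsto f(t^{n/(n+\alpha)})t^{m/(n+\alpha)}$ and $t\mapsto f(2t^{n/(n+\alpha)})t^{m/(n+\alpha)}$ are related by the dilation $D_{2^{(n+\alpha)/n}}$ up to the multiplicative constant $2^{m/n}$. The bound \eqref{prel:dilation_bounded} then yields $\|f(t^{n/(n+\alpha)})t^{m/(n+\alpha)}\|_{\repFin{Y}}\leq 2^{(m+n+\alpha)/n}\|f(2t^{n/(n+\alpha)})t^{m/(n+\alpha)}\|_{\repFin{Y}}\leq C\|v_m(t^{n/(n+\alpha)})\|_{\repFin{Y}}$, whence \eqref{prop:construction_of_radially_symmetric_function_local_necessity_Hardy:eq:Hardy_bounded_by_u} follows with $C_4$ depending only on $m$, $n$, and $\alpha$.

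The principal obstacle is the selection of the weight $\tau^{m/n-m}$ in $g$: it must simultaneously make $|v_m^{(m)}(t)|\,t^{m-m/n}$ equal $f(t)$ (so that the top-order term in $|D^m u_{f,R,a}|$ is integrable with the right norm), force the parameters of the Hardy-type operators $T_{m/n-k,\,k-m/n}$ governing the lower-order terms onto the critical line $\beta=-\alpha$ of \cref{prop:interpolation_of_operators_auxiliary}, and make $v_m$ large enough (via $g(s)s^{m-1}=f(s)s^{-1+m/n}$) to dominate the Hardy integral from below. A subsidiary technical point is that $g$ fails to be locally integrable at the origin for $m\geq 2$, so derivatives of $v_m$ may blow up at $0$; this is harmless because only $u_{f,R,a}\in C^{m-1}(B_R\setminus\{0\})$ is required, and the norm estimates above still place $|D^m u_{f,R,a}|$ in $X(B_R)$.
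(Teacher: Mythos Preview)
Your construction of $u_{f,R,a}$ is exactly the paper's (their $g_m(|x|)$ equals your $v_m((|x|/R)^n)$), and your Sobolev estimate via Fa\`a di Bruno together with \cref{prop:interpolation_of_operators_auxiliary} is essentially the paper's argument, only reorganised. One minor point: in the paper's notation $D^m u$ denotes \emph{all} derivatives of orders $1,\dots,m$, not just the top order; your Fa\`a di Bruno bound and the application of \cref{prop:interpolation_of_operators_auxiliary} go through verbatim for each order $j\leq m$ (with $\beta=k-j/n\geq k-m/n=-\alpha$), so this is only a presentational gap.

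There is, however, a genuine gap in your Hardy estimate. The opening reduction ``by a standard density argument I may assume that $f$ is \dots\ nonincreasing'' is not valid here: the proposition asks you to produce, for each given $f\in L^\infty(0,1)$, a function $u_{f,R,a}$ satisfying the two-sided estimates, and neither the Hardy integral $\int_{\rho}^{1}f(s)s^{-1+m/n}\dd s$ nor the construction $f\mapsto u_{f,R,a}$ is invariant under passing to $f^*$. Your near/far splitting then uses monotonicity twice (to bound $\int_\rho^{2\rho}f(s)s^{-1+m/n}\dd s\leq Cf(\rho)\rho^{m/n}$ and to get $v_m(\rho)\geq Cf(2\rho)\rho^{m/n}$), so as written the argument only covers nonincreasing $f$. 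The fix is precisely what the paper does and is in fact simpler than your approach: apply the dilation directly to the full Hardy integral. Writing $H(t)=\int_{t^{n/(n+\alpha)}}^{1}f(s)s^{-1+m/n}\dd s$ and $G(t)=\chi_{(0,2^{-(n+\alpha)/n})}(t)\int_{2t^{n/(n+\alpha)}}^{1}f(s)s^{-1+m/n}\dd s$, one has $H=D_{2^{(n+\alpha)/n}}G$ on $(0,1)$, so $\|H\|_{\repFin{Y}}\leq 2^{(n+\alpha)/n}\|G\|_{\repFin{Y}}$ by \eqref{prel:dilation_bounded}. Now only your ``far part'' lower bound $v_m(\rho)\geq C\int_{2\rho}^{1}f(s)s^{-1+m/n}\dd s$ is needed, and that one holds for every nonnegative $f$ (no monotonicity). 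This replaces your near-part analysis entirely and yields \eqref{prop:construction_of_radially_symmetric_function_local_necessity_Hardy:eq:Hardy_bounded_by_u} for arbitrary $f\in L^\infty(0,1)$.
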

\begin{proof}
In the entire proof, $C>0$ is an universal constant that may depend only on $\alpha$, $m$, and $n$. Let $f\in L^\infty(0, 1)$ be nonnegative with support inside $[0, a)$. The construction of the function $u_{f,R,a}$ is inspired by the construction in~\cite[p.~562]{KP:06}. We define the functions $g_1, \dots, g_m\colon [0, R] \to [0, \infty)$ as
\begin{equation*}
g_k(t) = \chi_{(0, R)}(t) T^k(f_m)\big( (t/R)^n \big),\ t\in[0, R],\ k = 0, 1, \dots, m,
\end{equation*}
where $T$ is an auxiliary operator, mapping $\Mpl(0, 1)$ to $\Mpl(0, 1)$, defined as
\begin{equation*}
(Th)(t) = \chi_{(0, 1)}(t)\int_t^1 h(s) \dd s,\ t\in[0, 1],\ h\in\Mpl(0, 1),
\end{equation*}
and $f_m$ is defined as
\begin{equation}\label{prop:construction_of_radially_symmetric_function_local_necessity_Hardy:eq:1}
f_m(t) = f(t) t^{-m + \frac{m}{n}},\ t\in(0, 1).
\end{equation}
The expression $T^j$ is to be understood as $T^j = \overbrace{T\circ \dots \circ T}^{\text{$j$-times}}$. It is easy to see that $g_m\in \C^{m-1}((0, R])$ and that $g_m^{(m-1)}$ is locally Lipschitz continuous in $(0, R]$. Using the Fubini theorem, we can easily verify by induction that for every $k\in\{1, \dots, m\}$ and every $t\in(0, R)$,
\begin{equation}\label{prop:construction_of_radially_symmetric_function_local_necessity_Hardy:eq:2}
g_k(t) =  \frac{\chi_{(0, R)}(t)}{(k-1)!} \int_{(t/R)^n}^1 f(s) s^{-m + \frac{m}{n}} \big( s - (t/R)^n \big)^{k-1} \dd s.
\end{equation}
Therefore, for every $j\in\{0, 1, \dots, m-1\}$ we have
\begin{equation}\label{prop:construction_of_radially_symmetric_function_local_necessity_Hardy:eq:3}
g_{m-j}(t) \leq \frac{\chi_{(0, R)}(t)}{(m-j-1)!} \int_{(t/R)^n}^1 f(s) s^{-j - 1 + \frac{m}{n}} \dd s \quad \text{for every $t\in(0, R)$}.
\end{equation}
Furthermore, using \eqref{prop:construction_of_radially_symmetric_function_local_necessity_Hardy:eq:2} with $k = m$ together with the fact that $(t/R)^n\leq s/2$ for every $s\geq 2(t/R)^n$, we have
\begin{align}
g_{m}(t) &\geq	\frac{\chi_{(0, 2^{-n}R)}(t)}{(m-1)!} \int_{2(t/R)^n}^1 f(s) s^{-m + \frac{m}{n}} (s - (t/R)^n)^{m-1} \dd s \nonumber \\
&\geq \frac{2^{1-m}}{(m-1)!} \chi_{(0, 2^{-n}R)}(t) \int_{2(t/R)^n}^1 f(s) s^{-m + \frac{m}{n}} s^{m-1} \dd s \nonumber \\
&= \frac{2^{1-m}}{(m-1)!} \chi_{(0, 2^{-n}R)}(t) \int_{2(t/R)^n}^1 f(s) s^{-1 + \frac{m}{n}} \dd s. \label{prop:construction_of_radially_symmetric_function_local_necessity_Hardy:eq:4}
\end{align}
Next, set
\begin{equation*}
u_{f,R,a}(x) = g_m(|x|),\ x\in B_R.
\end{equation*}
Note that $u_{f,R,a}\in\C^{m-1}(B_R\setminus\{0\})$ and its weak derivatives of order $m$ exist in $B_R$. Furthermore, the function $u_{f,R,a}$ is clearly radially symmetric and nonincreasing. Moreover, since the support of $f$ is inside $[0, a)$, the support of $u_{f,R,a}$ is inside $B_r$, where $r=a^\frac1{n}R$.

Now, we claim that \eqref{prop:construction_of_radially_symmetric_function_local_necessity_Hardy:eq:sobolev_norm_bounded_by_norm_of_f} is valid. To this end, by~\eqref{prel:representation_measure_preserving_ball} with $\alpha = 0$ and by~\eqref{prop:construction_of_radially_symmetric_function_local_necessity_Hardy:eq:3} with $j = 0$, we have
\begin{equation*}
\|u_{f,R,a}\|_{X(B_R)} \leq \frac1{(m-1)!}\Big\|\int_t^1 f(s) s^{-1 + \frac{m}{n}} \dd s \ \Big\|_{\repFin{X}}.
\end{equation*}
Furthermore, using~\cref{prop:interpolation_of_operators_auxiliary} with $\alpha = m/n$, and $\beta = 0$, we obtain
\begin{align}
\|u_{f,R,a}\|_{X(B_R)} \leq \frac1{(m-1)!} \frac{n}{m} \|f\|_{\repFin{X}}. \label{prop:construction_of_radially_symmetric_function_local_necessity_Hardy:eq:7}
\end{align}
Straightforward computations show that all the (weak) derivatives of $u_{f,R,a}$ up to order $m$ are linear combinations of (some of) the functions
\begin{equation}\label{prop:construction_of_radially_symmetric_function_local_necessity_Hardy:eq:5}
B_R \ni x \mapsto R^{-jn} g_{m-j}(|x|)|x|^{jn-k-l}x_{i_1}\cdots x_{i_l},
\end{equation}
where $j\in\{1, \dots,m\}$, $k\in\{j,\dots, m\}$,  and $l\in\{0, 1, \dots, k\}$. Moreover, the coefficients depend only on $m$ and $n$. Hence, using \eqref{prop:construction_of_radially_symmetric_function_local_necessity_Hardy:eq:5}, \eqref{prop:construction_of_radially_symmetric_function_local_necessity_Hardy:eq:1}, \eqref{prop:construction_of_radially_symmetric_function_local_necessity_Hardy:eq:3}, and \eqref{prel:representation_measure_preserving_ball} with $\alpha = 0$, we see that
\begin{align}
\|D^m u_{f,R,a}\|_{X(B_R)} &\leq C \Big( R^{-mn} \|f_m((|x|/R)^n) |x|^{mn - m}\|_{X(B_R)} \nonumber\\
&\qquad+ \sum_{j = 1}^{m-1}\sum_{k = j}^m R^{-jn} \|g_{m-j}(|x|)|x|^{jn-k}\|_{X(B_R)} \Big) \nonumber\\
&\qquad\leq C \Big( R^{-m} \|f((|x|/R)^n)\|_{X(B_R)} \nonumber\\
&\qquad+ \sum_{j = 1}^{m-1}\sum_{k = j}^m R^{-jn} \Big\| |x|^{jn-k} \int_{(|x|/R)^n}^1 f(s) s^{-j - 1 + \frac{m}{n}} \dd s \Big\|_{X(B_R)} \Big) \nonumber\\
&= C \Big(R^{-m} \|f\|_{\repFin{X}} \nonumber\\
&\qquad+ \sum_{j = 1}^{m-1}\sum_{k = j}^m R^{-k}\Big\| t^{j-\frac{k}{n}}\int_t^1 f(s)s^{-j-1+\frac{m}{n}} \dd s \Big\|_{\repFin{X}} \Big). \label{prop:construction_of_radially_symmetric_function_local_necessity_Hardy:eq:6}
\end{align}
Furthermore, for all $j\in\{1,\dots,m-1\}$ and $k\in\{j, \dots, m\}$, we have
\begin{equation*}
\Big\| t^{j-\frac{k}{n}}\int_t^1 f(s)s^{-j-1+\frac{m}{n}} \dd s \Big\|_{\repFin{X}} \leq \frac{n}{nj-m} \|f\|_{\repFin{X}}
\end{equation*}
thanks to \cref{prop:interpolation_of_operators_auxiliary} with $\alpha = -j + m/n$ and $\beta = j-k/n$. Hence, combining this with \eqref{prop:construction_of_radially_symmetric_function_local_necessity_Hardy:eq:6}, we obtain
\begin{equation}\label{prop:construction_of_radially_symmetric_function_local_necessity_Hardy:eq:8}
\|D^m u_{f,R,a}\|_{X(B_R)} \leq C \max\{1, R^{-m}\} \|f\|_{\repFin{X}}.
\end{equation}
Therefore, thanks to \eqref{prop:construction_of_radially_symmetric_function_local_necessity_Hardy:eq:7} and \eqref{prop:construction_of_radially_symmetric_function_local_necessity_Hardy:eq:8}, we arrive at
\begin{equation*}
\|u_{f,R,a}\|_{\wmrXB} \leq C \max\{1, R^{-m}\} \|f\|_{\repFin{X}}.
\end{equation*}
Hence, \eqref{prop:construction_of_radially_symmetric_function_local_necessity_Hardy:eq:sobolev_norm_bounded_by_norm_of_f} is true. In particular, since $\|f\|_{\repFin{X}} < \infty$, we have $u_{f,R,a}\in\wmrXB$.

Finally, we have yet to show that \eqref{prop:construction_of_radially_symmetric_function_local_necessity_Hardy:eq:Hardy_bounded_by_u} is valid. Thanks to the boundedness of the dilation operator $D_{2^{(n+\alpha)/n}}$ on $\rep{Y}$ (see~\eqref{prel:dilation_bounded}) and to~\eqref{prop:construction_of_radially_symmetric_function_local_necessity_Hardy:eq:4}, we have
\begin{align*}
\Big\| \int_{t^\frac{n}{n+\alpha}}^1 f(s) s^{-1+\frac{m}{n}} \dd s \Big\|_{\repFin{Y}}  &\leq 2^\frac{n+\alpha}{n} \Big\| \chi_{(0, 2^{-(n+\alpha)/n})}(t) \int_{2t^\frac{n}{n+\alpha}}^1 f(s) s^{-1+\frac{m}{n}} \dd s \Big\|_{\repFin{Y}} \\
&= 2^\frac{n+\alpha}{n} \Big\| \chi_{(0, 2^{-n}R)}(|x|) \int_{2(|x|/R)^n}^1 f(s) s^{-1+\frac{m}{n}} \dd s \Big\|_{Y(B_R, \aldx)}\\
&\leq 2^\frac{n+\alpha}{n} 2^{m-1} (m-1)! \|g_m(|x|)\|_{Y(B_R, \aldx)} \\
&= 2^{\frac{\alpha}{n} + m} (m-1)! \|u_{f,R,a}\|_{Y(B_R, \aldx)}.
\end{align*}
Therefore, \eqref{prop:construction_of_radially_symmetric_function_local_necessity_Hardy:eq:Hardy_bounded_by_u} is indeed true, which concludes the proof.
\end{proof}

\begin{remark}\label{rem:continuation_outside_by_zero}
Since the function $u_{f,R,a}$ with $a\in(0,1)$ has support inside $B_r$ with $r < R$, its continuation outside $B_R$ by $0$ belongs to $\wmrX\cap\mathcal C^{m-1}(\rn\setminus\{0\})$, provided that $X(B_R)$ in \cref{prop:construction_of_radially_symmetric_function_local_necessity_Hardy} is the restriction of an \riSp{} $X(\rn)$. Furthermore, an inspection of the proof shows that the support of the function $u_{f,R,a}$ is the closed ball $\bar{B}_{\varrho^{1/n}R}$, where $\varrho = \esssup\{t\in(0,1): f(t) > 0\}$.
\end{remark}

Now, we finally prove \cref{thm:embedding_on_balls_characterization}.
\begin{proof}[Proof of~\cref{thm:embedding_on_balls_characterization}]
Assume that \eqref{thm:embedding_on_balls_characterization:eq:Hardy} is valid with a constant $C_{B,H}>0$. Let $u\in\wmrXB$ and $u(x)=g(|x|)$. Note that for every $j\in\{0,\dots, m-1\}$ and every $t\in(0,R)$, we have
\begin{align}
|g^{(j)}(t)| &\leq \int_t^R |g^{(j+1)}(s)| \dd s + |g^{(j)}(R)| \nonumber\\
&= \frac{R}{n}\int_{(t/R)^n}^1 |g^{(j+1)}(Rs^\frac1{n})| s^{-1 + \frac1{n}} \dd s + |g^{(j)}(R)| \label{thm:embedding_on_balls_characterization:eq:1}
\end{align}
thanks to the fact that $g^{(j)}\in \aclocAB{R}$ and the obvious change of variables. Furthermore, since $g^{(j)}\in\C([\frac{R}{2},R])$ for every $j\in\{0,\dots, m-1\}$, the elementary mean value theorem for integrals yields the existence of $t_0\in[\frac{R}{2},R]$ such that
\begin{equation*}
g^{(j)}(t_0) = \frac{2}{R} \int_{\frac{R}{2}}^{R} g^{(j)} (s) \dd s.
\end{equation*}
Hence, we have
\begin{align}
|g^{(j)}(R)| &= \Bigg| \int_{t_0}^R g^{(j+1)}(s) \dd s + \frac{2}{R} \int_{\frac{R}{2}}^{R} g^{(j)} (s) \dd s \Bigg| \nonumber\\
&\leq \int_{\frac{R}{2}}^R |g^{(j+1)}(s)| \dd s + \frac{2}{R} \int_{\frac{R}{2}}^{R} |g^{(j)}(s)| \dd s \nonumber\\
&= \frac{R}{n}\int_{2^{-n}}^1 |g^{(j+1)}(Rs^\frac1{n})| s^{-1+\frac1{n}} \dd s + \frac{2}{n} \int_{2^{-n}}^1 |g^{(j)}(Rs^\frac1{n})| s^{-1+\frac1{n}} \dd s \nonumber\\
&\leq \frac{2^{n-1}R}{n}\int_{2^{-n}}^1 |g^{(j+1)}(Rs^\frac1{n})| \dd s + \frac{2^{n}}{n} \int_{2^{-n}}^1 |g^{(j)}(Rs^\frac1{n})| \dd s \label{thm:embedding_on_balls_characterization:eq:2}
\end{align}
for every $j\in\{0,\dots, m-1\}$ and every $t\in(0,R)$. Therefore, thanks to \eqref{thm:embedding_on_balls_characterization:eq:2}, the H\"older inequality \eqref{prel:Holder}, and \eqref{prel:representation_measure_preserving_ball} with $\alpha = 0$, we obtain
\begin{equation}\label{thm:embedding_on_balls_characterization:eq:3}
|g^{(j)}(R)| \leq \frac{2^n\fundX[X'](\omega_nR^n)}{n} \Bigg(\|g^{(j)}(|x|)\|_{X(B_R)} + \frac{R}{2}\|g^{(j+1)}(|x|)\|_{X(B_R)}\Bigg).
\end{equation}
Now, we show by backward induction on $i=0,\dots, m-1$, that
\begin{align}
|g^{(i)}(t)| &\leq \frac{R^{m-i}}{n}\int_{(t/R)^n}^1 |g^{(m)}(Rs^\frac1{n})| s^{-1+\frac{m-i}{n}} \dd s \nonumber\\
&\quad+ 2^{n+m-i-1}\frac{\fundX[X'](\omega_nR^n)}{n}\max\{1, R^{m-i}\}\sum_{k=i}^m \|g^{(k)}(|x|)\|_{X(B_R)} \label{thm:embedding_on_balls_characterization:eq:4}
\end{align}
for every $t\in(0,R)$. For $i=m-1$, the validity of \eqref{thm:embedding_on_balls_characterization:eq:4} easily follows from \eqref{thm:embedding_on_balls_characterization:eq:1} and \eqref{thm:embedding_on_balls_characterization:eq:3} with $j=m-i$. Assuming that \eqref{thm:embedding_on_balls_characterization:eq:4} is valid for $i\in\{1,\dots,m-1\}$, we see that
\begin{align*}
&\int_{(t/R)^n}^1 |g^{(i)}(Rs^\frac1{n})| s^{-1 + \frac1{n}} \dd s \\
&\leq \frac{R^{m-i}}{n}\int_{(t/R)^n}^1 \Big( \int_s^1 |g^{(m)}(R\tau^\frac1{n})| \tau^{-1+\frac{m-i}{n}} \dd\tau \Big) s^{-1 + \frac1{n}} \dd s \\
&\quad+ 2^{n+m-i-1}\fundX[X'](\omega_nR^n)\max\{1, R^{m-i}\}\sum_{k=i}^m \|g^{(k)}(|x|)\|_{X(B_R)}
\end{align*}
for every $t\in(0, \infty)$. As for the integral on the right-hand side, we have
\begin{equation*}
\int_{(t/R)^n}^1 \Big( \int_s^1 |g^{(m)}(R\tau^\frac1{n})| \tau^{-1+\frac{m-i}{n}} \dd\tau \Big) s^{-1 + \frac1{n}} \dd s \leq n\int_{(t/R)^n}^1 |g^{(m)}(R\tau^\frac1{n})| \tau^{-1+\frac{m-i+1}{n}} \dd\tau
\end{equation*}
thanks to the Fubini theorem. Therefore, combining the last two inequalities, we have
\begin{align*}
&\int_{(t/R)^n}^1 |g^{(i)}(Rs^\frac1{n})| s^{-1 + \frac1{n}} \dd s \\
&\leq R^{m-i}\int_{(t/R)^n}^1 |g^{(m)}(Rs^\frac1{n})| s^{-1+\frac{m-i+1}{n}} \dd s \\
&\quad+ 2^{n+m-i-1}\fundX[X'](\omega_nR^n)\max\{1, R^{m-i}\}\sum_{k=i}^m \|g^{(k)}(|x|)\|_{X(B_R)}
\end{align*}
for every $t\in(0, \infty)$. Hence, by combining this and both \eqref{thm:embedding_on_balls_characterization:eq:1} and \eqref{thm:embedding_on_balls_characterization:eq:3} with $j=i-1$, it is easy to see that \eqref{thm:embedding_on_balls_characterization:eq:4} is valid for $i-1$, which finishes the inductive step.

Next, thanks to \eqref{prel:representation_measure_preserving_ball}, \eqref{thm:embedding_on_balls_characterization:eq:4} with $i=0$, and \eqref{thm:embedding_on_balls_characterization:eq:Hardy}, we see that
\begin{align*}
\|u\|_{Y(B_R, \aldx)} &= \|g(|x|)\|_{Y(B_R, \aldx)} = \|g(Rt^{\frac1{n+\alpha}})\|_{\rep[0,1]{Y}} \\
&\leq \frac{R}{n}\Big\| \int_{t^{\frac{n}{n+\alpha}}}^1 g(Rs^\frac1{n}) s^{-1 + \frac{m}{n}} \dd s \Big\|_{\rep[0,1]{Y}} \\
&\quad+ 2^{n+m-1}\frac{\fundX[X'](\omega_nR^n)}{n}\max\{1, R^{m-i}\}\fundX[Y](\aldx(B_R))\sum_{k=0}^m \|g^{(k)}(|x|)\|_{X(B_R)} \\
&\leq C \Big( \|g(Rt^{\frac1{n}})\|_{\rep[0,1]{X}} + \sum_{k=0}^m \|g^{(k)}(|x|)\|_{X(B_R)} \Big) \\
&=C \Big( \|g(|x|)\|_{X(B_R)} + \sum_{k=0}^m \|g^{(k)}(|x|)\|_{X(B_R)} \Big)
\end{align*}
with a constant $C$ depending only on $C_{B,H}$, $\alpha$, $m$, $n$, $R$, $\fundX$ and $\fundX[Y]$. Combining this with \eqref{prel:ineq_between_norm_of_gradient_and_derivative_of_the_one_var_func}, we obtain the validity of \eqref{thm:embedding_on_balls_characterization:eq:Sob} with $C_{B,S}=2C$.

Now, assume that \eqref{thm:embedding_on_balls_characterization:eq:Sob} is valid with a constant $C_{B,S}>0$. We will show that \eqref{thm:embedding_on_balls_characterization:eq:Hardy} is valid with a constant $C_{B,H}\in(0, \infty)$. To this end, it is clearly sufficient to consider nonnegative functions $f\in\repFin{X}$. Furthermore, since $\min\{|f|, N\}\nearrow |f|$ as $N\to\infty$, it is actually sufficient to prove \eqref{thm:embedding_on_balls_characterization:eq:Hardy} for nonnegative functions $f\in \repFin{X} \cap L^\infty(0,1)$. Moreover, since $|f|\chi_{(0,a)}\nearrow |f|$ as $a\to 1^+$, we may also assume that $f$ is zero in a left neighborhood of $1$.  Let $f\in \repFin{X} \cap L^\infty(0,1)$ be nonnegative and such that $f = f\chi_{(0,a)}$ for some $a\in(0,1)$. Using \eqref{prop:construction_of_radially_symmetric_function_local_necessity_Hardy:eq:Hardy_bounded_by_u}, \eqref{thm:embedding_on_balls_characterization:eq:Sob}, and \eqref{prop:construction_of_radially_symmetric_function_local_necessity_Hardy:eq:sobolev_norm_bounded_by_norm_of_f}, we obtain
\begin{align*}
\Big\| \int_{t^{\frac{n}{n+\alpha}}}^1 f(s) s^{-1 + \frac{m}{n}} \dd s \Big\|_{\rep[0, 1]{Y}} &\leq C_4 \|u_{f,R,1}\|_{Y(B_R, \aldx)} \leq C_4 C_{B, S} \|u_{f,R,1}\|_{\wmrXB} \\
&\leq C_4 C_{B, S} C_3\max\{1, R^{-m}\} \|f\|_{\repFin{X}},
\end{align*}
where $u_{f,R,1}\in\wmrXB$ is the function from \cref{prop:construction_of_radially_symmetric_function_local_necessity_Hardy}, and $C_3$ and $C_4$ are the constants from \eqref{prop:construction_of_radially_symmetric_function_local_necessity_Hardy:eq:sobolev_norm_bounded_by_norm_of_f} and \eqref{prop:construction_of_radially_symmetric_function_local_necessity_Hardy:eq:Hardy_bounded_by_u}, respectively. Hence, the inequality \eqref{thm:embedding_on_balls_characterization:eq:Hardy} is valid with $C_{B,H} = C_3 C_4 C_{B, S}\max\{1, R^{-m}\}$.
\end{proof}

\subsection{Compactness of the weighted Sobolev embedding}
Now, having a description of the optimal target space in the weighted Sobolev embedding \eqref{E:embedding_on_balls_Sob_emb} at our disposal, we use it to characterize the compactness of the weighted Sobolev embedding. We only state the characterization when $Y(B_R, \aldx)$ is not equivalent to $L^\infty(B_R, \aldx)$, which is equivalent to the fact that (see~\cite[Theorem~5.2]{S:12}, cf.~\cite[Chapter~2, Theorem~5.5]{BS})
\begin{equation}\label{thm:embedding_on_balls_compactness_not_Linfty:fund_Y_vanishes_0}
\lim_{t\to 0^+} \varphi_Y(t) = 0.
\end{equation}
The problem of characterizing compactness of Sobolev embeddings into $L^\infty$, being usually in a way easier, requires different techniques. In fact, by combining \cref{prop:construction_of_radially_symmetric_function_local_necessity_Hardy} with $\alpha = 0$ and \cite[Theorem~1.2]{KP:08}, it can be shown that $\wmrX\hookrightarrow L^\infty(B_R)$ is compact if and only if $W^mX(B_R)\hookrightarrow L^\infty(B_R)$ is. Note that the restriction $m<n$ imposed in \cite[Theorem~1.2]{KP:08} stems from the interpolation technique used there and is not necessary (cf.~\cite[Theorem~6.1]{S:15}, \cite[Theorem~1.2]{CM:19}). Since $L^\infty(B_R,\aldx) = L^\infty(B_R)$ (recall~\cref{rem:embedding_on_balls_into_Linfty}), it follows that $\wmrX\hookrightarrow L^\infty(B_R, \aldx)$ is compact if and only if $W^mX(B_R)\hookrightarrow L^\infty(B_R)$ is. Therefore, we can use the known characterizations of the compactness of $W^mX(B_R)\hookrightarrow L^\infty(B_R)$ (in particular, see~\cite{KP:08}) to characterize the compactness of $\wmrX\hookrightarrow L^\infty(B_R, \aldx)$. It turns out that $\wmrXB \hookrightarrow L^\infty(B_R,\aldx)$ is compact if and only if
\begin{equation}\label{E:embedding_on_balls_compactness_into_Linfty_char_AsocX}
\lim_{a\to0^+} \|t^{-1 + \frac{m}{n}}\chi_{(0,a)}(t)\|_{\repAsocFin{X}} = 0.
\end{equation}
When $m\leq n$, it is easy to see, by using \eqref{prel:associate_norm}, that \eqref{E:embedding_on_balls_compactness_into_Linfty_char_AsocX} is equivalent to
\begin{equation}\label{E:embedding_on_balls_compactness_into_Linfty_char_domain_ac_to_Lorentz}
\repFin{X} \ac L^{\frac{n}{m},1}(0,1).
\end{equation}
When $m>n$, \eqref{E:embedding_on_balls_compactness_into_Linfty_char_AsocX} is always satisfied.

\begin{theorem}\label{thm:embedding_on_balls_compactness_not_Linfty}
Let $m,n\in\N$, $n\geq2$. Let $\alpha\geq0$ and $R\in(0, \infty)$. Let $X(B_R)$ and $Y(B_R,\aldx)$ be \riSps{}. Assume that \eqref{thm:embedding_on_balls_compactness_not_Linfty:fund_Y_vanishes_0} is true. The following two statements are equivalent.
\begin{enumerate}[label=(\arabic*), ref=(\arabic*)]
	\item The embedding \eqref{E:embedding_on_balls_Sob_emb} is compact.
	\item We have
	\begin{equation}\label{thm:embedding_on_balls_compactness_not_Linfty:ac}
		Y_X(B_R, \aldx) \ac Y(B_R, \aldx),
	\end{equation}
	where $Y_X(B_R, \aldx)$ is the optimal target space from \cref{cor:embedding_on_balls_optimal_target}.
\end{enumerate}
\end{theorem}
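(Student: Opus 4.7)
The plan is to prove both implications via the standard scheme for compactness of Sobolev embeddings in the rearrangement-invariant framework on finite measure spaces, exploiting the fact that $(B_R, \aldx)$ has finite measure (so the theory of almost compact embeddings applies) and reducing everything to the Hardy-type operator $\Phi(f)(t) := \int_{t^{n/(n+\alpha)}}^{1} f(s) s^{-1 + m/n} \dd s$ through the reduction principle \cref{thm:embedding_on_balls_characterization} and the description of $Y_X$ in \cref{cor:embedding_on_balls_optimal_target}.

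For sufficiency ($Y_X \ac Y \Rightarrow$ compactness), take a bounded sequence $\{u_k\} \subseteq \wmrXB$. By \cref{cor:embedding_on_balls_optimal_target} the sequence is bounded in $Y_X(B_R, \aldx)$, so the hypothesis $Y_X \ac Y$ yields that $\{u_k\}$ has uniformly absolutely continuous norm in $Y(B_R, \aldx)$. On the other hand, $X(B_R) \hookrightarrow L^1(B_R)$ by \eqref{prel:Linfty_emb_X_emb_L1_finite_measure}, so $\{u_k\}$ is bounded in $W^{m,1}(B_R)$; the classical Rellich--Kondrachov theorem then provides a subsequence converging $\aldx$-a.e.\ (the weight is locally bounded away from $0$ off the origin, which has null measure). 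A Vitali-type theorem in the r.i.\ setting on finite measure spaces (cf.~\cite[Theorem~3.1]{S:12})---namely, a.e.\ convergence together with uniform absolute continuity of norms implies norm convergence---completes the extraction of a convergent subsequence.

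For necessity (compactness $\Rightarrow Y_X \ac Y$), under the assumption \eqref{thm:embedding_on_balls_compactness_not_Linfty:fund_Y_vanishes_0} the compactness of the Sobolev embedding is equivalent to bounded subsets of $\wmrXB$ having uniformly absolutely continuous norm in $Y(B_R, \aldx)$; this is a standard consequence of finiteness of $\aldx(B_R)$ combined with $\lim_{t \to 0^+}\varphi_Y(t) = 0$ (see, e.g., \cite[Theorem~3.1]{S:12}). I would then apply \cref{prop:construction_of_radially_symmetric_function_local_necessity_Hardy}: for every $\|f\|_{\repFin{X}}\leq 1$ with $\spt f \subseteq [0,a)$, the function $u_{f,R,a}$ is bounded in $\wmrXB$ by a constant independent of $a$ and is supported in $B_{a^{1/n}R}$, which has vanishing $\aldx$-measure as $a \to 0^+$. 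The uniform absolute continuity of norms applied to this family, combined with the bound $\|\Phi(f)\|_{\repFin{Y}} \leq C_4 \|u_{f,R,a}\|_{Y(B_R,\aldx)}$ from \eqref{prop:construction_of_radially_symmetric_function_local_necessity_Hardy:eq:Hardy_bounded_by_u}, gives
\begin{equation*}
\lim_{a \to 0^+} \sup_{\substack{\|f\|_{\repFin{X}} \leq 1 \\ \spt f \subseteq [0,a)}} \|\Phi(f)\|_{\repFin{Y}} = 0.
\end{equation*}

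The main obstacle, and the final step, is to upgrade this decay of $\Phi$ on compactly supported inputs to the genuine almost compact embedding $Y_X \ac Y$. The plan is to pass to the associate side via \eqref{prel:ac_iff_asocAC}: $Y_X \ac Y$ is equivalent to $Y' \ac Y_X'$, which by \eqref{prel:ac_iff_repAC} reduces to showing that $\|h^* \chi_{(0,A)}\|_{\repAsocFin{Y_X}} \to 0$ as $A \to 0^+$ uniformly for $h$ in the unit ball of $\repAsocFin{Y}$. Using the explicit formula \eqref{cor:embedding_on_balls_optimal_target:eq_asoc_norm_def} and splitting the computation of $(h^*\chi_{(0,A)})^{**}(s^{(n+\alpha)/n})$ over $\{s < A^{n/(n+\alpha)}\}$ and its complement, the principal part matches exactly the Fubini expansion of $\sup_{\|f\|_{\repFin X}\leq1}\|\Phi(f \chi_{(0,a)})\|_{\repFin{Y}}$ (as in the computation \eqref{cor:embedding_on_balls_optimal_target:eq:1} with appropriate truncation), while the tail term factorises as $A h^{**}(A)$ times a $\repAsocFin X$-norm of $s^{(m-n)/n} \chi_{[a,1)}$ and is controlled by $\varphi_Y(A) \to 0$ together with the boundedness of $\Phi: \repFin X \to \repFin{Y_X}$. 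This dualisation is the technical heart of the argument.
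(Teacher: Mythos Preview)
Your sufficiency argument is correct and essentially identical to the paper's.

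For necessity, two issues. First, the claim that ``compactness of the embedding is equivalent to bounded subsets of $\wmrXB$ having uniformly absolutely continuous norm in $Y(B_R,\aldx)$'' is not justified by the reference you give, and it is false in general: the assumption $\varphi_Y(0^+)=0$ does \emph{not} force individual functions in $Y$ to have absolutely continuous norm (think $Y=L^{p,\infty}$), so a compact set in $Y$ need not be uniformly integrable in this sense. What you actually need---and what does hold---is that the specific family $\{u_{f,R,a}\}$ has $Y$-norms tending to $0$ as $a\to0$; this follows directly because the functions have shrinking supports (hence any sequence converges to $0$ a.e.), and compactness then forces convergence to $0$ in $Y$ along every subsequence. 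This is the argument the paper uses.

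Second, and more seriously, your upgrade step has a gap in the tail control. Splitting at the \emph{coupled} threshold $a=A^{n/(n+\alpha)}$, the tail is
\[
\Big(\int_0^A h^*\Big)\,\big\|s^{-1+m/n}\chi_{[a,1)}\big\|_{\repAsocFin{X}}
\;\leq\;\varphi_Y(A)\cdot\sup_{\|f\|_{\repFin X}\leq1}\Phi(|f|)(A).
\]
Invoking boundedness of $\Phi\colon\repFin X\to\repFin{Y_X}$ only yields $\sup_f\Phi(|f|)(A)\leq C/\varphi_{Y_X}(A)$, so the tail is merely bounded by $C\,\varphi_Y(A)/\varphi_{Y_X}(A)$---and you have no a~priori reason this ratio tends to $0$ (it is a \emph{consequence} of $Y_X\ac Y$, which is what you are trying to prove). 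The remedy, and the paper's key move, is to \emph{decouple} the two parameters: first fix a threshold $b\in(0,1)$, chosen so small (via your correctly established intermediate result) that $\sup_{\|f\|\leq1}\|\Phi(f\chi_{(0,b)})\|_{\repFin Y}<\varepsilon$; then, with $b$ frozen, let $A\to0$ and bound the remaining piece by $\varphi_Y(A)\cdot b^{(m-n)/n}\,\varphi_{\repAsocFin X}(1)$, which now vanishes simply because $\varphi_Y(A)\to0$ against a fixed constant.
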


\begin{proof}
First, assume that the embedding \eqref{E:embedding_on_balls_Sob_emb} is compact. By~\eqref{prel:ac_iff_asocAC} and \eqref{prel:ac_iff_repAC}, the validity of \eqref{thm:embedding_on_balls_compactness_not_Linfty:ac} is equivalent to that of
\begin{equation}\label{thm:embedding_on_balls_compactness_not_Linfty:eq1}
\lim_{j\to\infty} \sup_{\|g\|_{\repAsocFin{Y}}\leq1}\|g^*\chi_{(0,1/j)}\|_{\repAsocFin{Y_X}} = 0.
\end{equation}
Let $\varepsilon > 0$. Using \eqref{cor:embedding_on_balls_optimal_target:eq_asoc_norm_def}, \eqref{prel:associate_norm}, the Fubini theorem, and \eqref{prel:double_associate_norm_nonincreasing_functions}, we see that
\begin{align}
\sup_{\|g\|_{\repAsocFin{Y}}\leq1}\|g^*\chi_{(0,1/j)}\|_{\repAsocFin{Y_X}} &= \sup_{\substack{\|g\|_{\repAsocFin{Y}}\leq1\\\|f\|_{\repFin{X}}\leq1}} \int_0^1 |f(t)| t^{-1+\frac{m}{n}} \int_0^{t^\frac{n+\alpha}{n}} g^*(s)\chi_{(0,1/j)}(s) \dd s \dd t \nonumber\\
&= \sup_{\substack{\|g\|_{\repAsocFin{Y}}\leq1\\\|f\|_{\repFin{X}}\leq1}} \int_0^1 g^*(s) \chi_{(0,1/j)}(s) \int_{s^\frac{n}{n+\alpha}}^1 |f(t)| t^{-1+\frac{m}{n}} \dd t \dd s \nonumber\\
&= \sup_{\|f\|_{\repFin{X}}\leq1} \Big\| \chi_{(0,1/j)}(t) \int_{t^\frac{n}{n+\alpha}}^1 |f(s)| s^{-1+\frac{m}{n}} \dd s \Big\|_{\repFin{Y}}. \label{thm:embedding_on_balls_compactness_not_Linfty:eq2}
\end{align}
Since $\min\{|f(t)|,N\}\nearrow |f(t)|$ for a.e.\ $t\in(0,1)$ as $N\to\infty$ for every $f\in\M(0,1)$, we have
\begin{align*}
&\sup_{\|f\|_{\repFin{X}}\leq1} \Big\| \chi_{(0,1/j)}(t) \int_{t^\frac{n}{n+\alpha}}^1 |f(s)| s^{-1+\frac{m}{n}} \dd s \Big\|_{\repFin{Y}}\\
&= \sup_{\substack{\|f\|_{\repFin{X}}\leq1\\f\in L^\infty(0,1)}} \Big\| \chi_{(0,1/j)}(t) \int_{t^\frac{n}{n+\alpha}}^1 |f(s)| s^{-1+\frac{m}{n}} \dd s \Big\|_{\repFin{Y}}.
\end{align*}
Moreover, since the embedding \eqref{E:embedding_on_balls_Sob_emb} is compact (and so also bounded), the supremum is finite thanks to \cref{thm:embedding_on_balls_characterization}. Therefore, for each $j\in\N$, there is a function $f_j\in L^\infty(0, 1)$, $\|f_j\|_{\repFin{X}}\leq1$, such that
\begin{align}
&\sup_{\|f\|_{\repFin{X}}\leq1} \Big\| \chi_{(0,1/j)}(t) \int_{t^\frac{n}{n+\alpha}}^1 |f(s)| s^{-1+\frac{m}{n}} \dd s \Big\|_{\repFin{Y}} \nonumber\\
&\leq 2 \Big\| \chi_{(0,1/j)}(t) \int_{t^\frac{n}{n+\alpha}}^1 |f_j(s)| s^{-1+\frac{m}{n}} \dd s \Big\|_{\repFin{Y}}. \label{thm:embedding_on_balls_compactness_not_Linfty:eq3}
\end{align}
By combining \eqref{thm:embedding_on_balls_compactness_not_Linfty:eq2} and \eqref{thm:embedding_on_balls_compactness_not_Linfty:eq3}, we obtain
\begin{equation}\label{thm:embedding_on_balls_compactness_not_Linfty:eq4}
\sup_{\|g\|_{\repAsocFin{Y}}\leq1}\|g^*\chi_{(0,1/j)}\|_{\repAsocFin{Y_X}} \leq 2 \Big\| \chi_{(0,1/j)}(t) \int_{t^\frac{n}{n+\alpha}}^1 |f_j(s)| s^{-1+\frac{m}{n}} \dd s \Big\|_{\repFin{Y}}
\end{equation}
for every $j\in\N$. Furthermore, set
\begin{equation*}
a_j = j^{-n/(n+\alpha)} \quad \text{for every $j\in\N$}.
\end{equation*}
Thanks to~\cref{prop:construction_of_radially_symmetric_function_local_necessity_Hardy} with $a = a_j$ and $f = |f_j|\chi_{(0,a_{2j})}$, there is a function $u_j\in\wmrXB$ such that
\begin{align}
&u_j(x) = 0 \quad \text{for every $x\in B_R$ with $|x| \geq j^{-n/(n+\alpha)} R$} \label{thm:embedding_on_balls_compactness_not_Linfty:eq5}\\
&\|u_j\|_{\wmrXB} \leq C \label{thm:embedding_on_balls_compactness_not_Linfty:eq6}\\
&\Big\| \int_{t^\frac{n}{n+\alpha}}^1 |f_j(s)| s^{-1+\frac{m}{n}} \chi_{(0,a_{2j})}(s) \dd s \Big\|_{\repFin{Y}} \leq C_4 \|u_j\|_{Y(B_R, \aldx)}\label{thm:embedding_on_balls_compactness_not_Linfty:eq7}
\end{align}
for each $j\in\N$, where $C_4$ is the constant from \eqref{prop:construction_of_radially_symmetric_function_local_necessity_Hardy:eq:Hardy_bounded_by_u} and $C$ depends only on $m$, $n$, and $R$.

Next, combining the compactness of the embedding \eqref{E:embedding_on_balls_Sob_emb} with \eqref{thm:embedding_on_balls_compactness_not_Linfty:eq6}, we may without loss of generality assume that there is a function $u\in Y(B_R, \aldx)$ such that $\lim_{j\to \infty} u_j = u$ in $Y(B_R, \aldx)$ and $\lim_{j\to\infty} u_j(x) = u(x)$ for $\aldx$-a.e.\ $x\in B_R$. The latter together with~\eqref{thm:embedding_on_balls_compactness_not_Linfty:eq5} implies that $u = 0$. Hence, we have
\begin{equation*}
\lim_{j\to \infty} \Big\| \int_{t^\frac{n}{n+\alpha}}^1 |f_j(s)| s^{-1+\frac{m}{n}} \chi_{(0,a_{2j})}(s) \dd s \Big\|_{\repFin{Y}} = 0
\end{equation*}
thanks to \eqref{thm:embedding_on_balls_compactness_not_Linfty:eq7}. Therefore, there is $j_0\in\N$ such that
\begin{equation}\label{thm:embedding_on_balls_compactness_not_Linfty:eq8}
\Big\| \int_{t^\frac{n}{n+\alpha}}^1 |f_j(s)| s^{-1+\frac{m}{n}} \chi_{(0,a_{2j_0})}(s) \dd s \Big\|_{\repFin{Y}} \leq \varepsilon.
\end{equation}
Now, using \eqref{thm:embedding_on_balls_compactness_not_Linfty:eq8} and the H\"older inequality \eqref{prel:Holder} in the last step, we have
\begin{align*}
\Big\| \chi_{(0,1/j)}(t) \int_{t^\frac{n}{n+\alpha}}^1 |f_j(s)| s^{-1+\frac{m}{n}} \dd s \Big\|_{\repFin{Y}} &\leq \Big\| \chi_{(0,1/j)}(t) \int_{t^\frac{n}{n+\alpha}}^1 |f_j(s)| s^{-1+\frac{m}{n}} \chi_{(0,a_{2j_0})}(s) \dd s \Big\|_{\repFin{Y}} \\
&\quad+ \Big\| \chi_{(0,1/j)}(t) \int_{t^\frac{n}{n+\alpha}}^1 |f_j(s)| s^{-1+\frac{m}{n}} \chi_{(a_{2j_0},1)}(s) \dd s \Big\|_{\repFin{Y}} \\
&\leq \Big\| \int_{t^\frac{n}{n+\alpha}}^1 |f_j(s)| s^{-1+\frac{m}{n}} \chi_{(0,a_{2j_0})}(s) \dd s \Big\|_{\repFin{Y}} \\
&\quad+ \Big( \int_{a_{2j_0}^{n/(n+\alpha)}}^1 |f_j(s)| s^{-1+\frac{m}{n}} \dd s \Big)\fundXrep[Y](1/j) \\
&\leq \varepsilon +  \fundXrep[Y](1/j) a_{2j_0}^\frac{m-n}{n + \alpha}\fundXAsocrep[X](1)
\end{align*}
for every $j\in\N$, $j\geq 2j_0$. Combining this with \eqref{thm:embedding_on_balls_compactness_not_Linfty:fund_Y_vanishes_0}, we obtain
\begin{equation*}
\Big\| \chi_{(0,1/j)}(t) \int_{t^\frac{n}{n+\alpha}}^1 |f_j(s)| s^{-1+\frac{m}{n}} \dd s \Big\|_{\repFin{Y}} \leq 2\varepsilon
\end{equation*}
for every $j\in\N$ sufficiently large. Since $\varepsilon>0$ was arbitrary, this together with~\eqref{thm:embedding_on_balls_compactness_not_Linfty:eq4} implies that \eqref{thm:embedding_on_balls_compactness_not_Linfty:eq1} is true. Hence, so is \eqref{thm:embedding_on_balls_compactness_not_Linfty:ac} in turn.

Second, assume that \eqref{thm:embedding_on_balls_compactness_not_Linfty:ac} is valid. Let $\{u_j\}_{j = 1}^\infty\in\wmrXB$ be a bounded sequence. By \cref{cor:embedding_on_balls_optimal_target}, the sequence is also bounded in $Y_X(B_R, \aldx)$. Furthermore, thanks to~\eqref{prel:Linfty_emb_X_emb_L1_finite_measure}, it is bounded in $W^{1,1}(B_R) = W^1L^1(B_R)$. The classical Rellich--Kondrachov theorem (e.g., \cite[Theorem~6.3]{AFbook}) implies that there is a function $u\in L^1(B_R)$ and a subsequence of $\{u_k\}_{k = 1}^\infty$ converging to $u$ pointwise a.e.~in $B_R$, which is the same as $\aldx$-a.e.\ in $B_R$. Moreover, thanks to~\eqref{prel:Fatou_lemma}, we have $u\in Y_X(B_R, \aldx)$. Hence, \cite[Theorem~3.1]{S:12} (cf.~\cite[Exercise~8, p.~31]{BS}) together with \eqref{thm:embedding_on_balls_compactness_not_Linfty:ac} implies that the subsequence converges to $u$ in $Y(B_R, \aldx)$. It follows that the embedding \eqref{E:embedding_on_balls_Sob_emb} is compact.
\end{proof}

We conclude this section with three remarks.
\begin{remark}\label{rem:different_characterizations_of_ac_embedding_of_optimal_space}
The validity of~\eqref{thm:embedding_on_balls_compactness_not_Linfty:ac}, with \eqref{thm:embedding_on_balls_compactness_not_Linfty:fund_Y_vanishes_0} still in force, can be equivalently reformulated in several ways. In particular, it is equivalent to the validity of
\begin{equation*}
	\lim_{a\to 0_+} \sup_{\|f\|_{\repFin{X}}\leq1} \Big\| \chi_{(0,a)}(t) \int_{t^\frac{n}{n+\alpha}}^1 f^*(s) s^{-1 + \frac{m}{n}} \dd{s} \Big\|_{\repFin{Y}} = 0.
\end{equation*}
This and other equivalent reformulations can be obtained in essentially the same way as in \cite{CM:22, KP:06, S:15}.
\end{remark}

\begin{remark}\label{rem:embedding_on_balls_compactness_functions_with_zero_boundary_data}
It is worth noting that the functions $\{u_j\}_{j = 1}^\infty$ that we used for proving the necessity of \eqref{thm:embedding_on_balls_compactness_not_Linfty:ac} actually belong to the subspace (see~\eqref{thm:embedding_on_balls_compactness_not_Linfty:eq5})
\begin{equation}\label{rem:embedding_on_balls_compactness_functions_with_zero_boundary_data:def}
\wmrXBvan = \{u\in\wmrXB: u=u\chi_{B_{\tilde{R}}}\text{for some $0<\tilde{R} < R$}\}.
\end{equation}
\end{remark}

\begin{remark}\label{rem:embedding_on_balls_compactness:m_geq_n_not_interesting}
In view of \cref{rem:embedding_on_balls_into_Linfty} and \eqref{prel:Linfty_emb_X_emb_L1_finite_measure}, we have $Y_X(B_R,\mu) = L^\infty(B_R,\mu)$ for every \riSp{} $X(B_R)$ when $m\geq n$. Furthermore, characterizing the compactness of the Sobolev embedding \eqref{E:embedding_on_balls_Sob_emb} is rather simple when $m\geq n$. When $m>n$, the discussion before \cref{thm:embedding_on_balls_compactness_not_Linfty} (in particular, recall~\eqref{E:embedding_on_balls_compactness_into_Linfty_char_AsocX}) together with \eqref{prel:Linfty_emb_X_emb_L1_finite_measure} implies that \eqref{E:embedding_on_balls_Sob_emb} is always compact. When $m = n$, the fact that $Y_X(B_R,\mu) = L^\infty(B_R,\mu)$ combined with \cref{thm:embedding_on_balls_compactness_not_Linfty} (see also \cite[Theorem~5.2]{S:12}) and \eqref{E:embedding_on_balls_compactness_into_Linfty_char_AsocX} yields that \eqref{E:embedding_on_balls_Sob_emb} is compact if and only if $X$ is not (equivalent to) $L^1$ or $Y$ is not (equivalent to) $L^\infty$. 
\end{remark}

\section{Proof of~Theorem~\ref{thm:characterization_of_comp_emb_Y}}\label{sec:proof_of_main_result}
\begin{proof}[Proof of~\cref{thm:characterization_of_comp_emb_Y}]
\emph{Sufficiency.}
Let $\{u_j\}_{j = 1}^\infty\subseteq\wmrX$ be bounded. We will show that it contains a subsequence that is Cauchy in $Y(\rn)$. Let $\varepsilon > 0$. Thanks to \cref{prop:bounded_set_in_WmrX_tail_is_small}, there is $R>0$ so large that
\begin{equation}\label{thm:characterization_of_comp_emb_Y_locally_not_Linfty:eq:1}
\sup_{i,j\in\N}\|(u_i - u_j)\chi_{\rn\setminus B_R}\|_{Y(\rn)} \leq \varepsilon.
\end{equation}
We now distinguish between whether \eqref{thm:characterization_of_comp_emb_Y:eq:fund_Y_vanishes_0} or \eqref{thm:characterization_of_comp_emb_Y:eq:fund_Y_not_vanishing_at_0} is the case.

First, assume that \eqref{thm:characterization_of_comp_emb_Y:eq:fund_Y_vanishes_0} is true. We claim that \eqref{thm:characterization_of_comp_emb_Y_locally_not_Linfty:eq:Hardy_op_compact} is valid whether $m < n$ or $m\geq n$. When $m < n$, its validity is assumed, and so there is nothing to prove. When $m\geq n$ (and so $-1+m/n\geq0$), we have
\begin{align*}
&\sup_{\|f\chi_{(0, 1)}\|_{\rep{X}}\leq 1} \Big\| \chi_{(0, a)}(t) \int_t^1 (f\chi_{(0, 1)})^*(s) s^{-1 + \frac{m}{n}} \dd{s}  \Big\|_{\rep{Y}} \\
&\quad\qquad\leq \|t^{-1 + \frac{m}{n}}\chi_{(0,1)}(t)\|_{\repAsoc{X}} \varphi_Y(a) \leq \varphi_{X'}(1) \varphi_Y(a)
\end{align*}
for every $a>0$ thanks to the H\"older inequality~\eqref{prel:Holder}. The validity of \eqref{thm:characterization_of_comp_emb_Y_locally_not_Linfty:eq:Hardy_op_compact} then follows from the assumption~\eqref{thm:characterization_of_comp_emb_Y:eq:fund_Y_vanishes_0}. Hence, either way, \eqref{thm:characterization_of_comp_emb_Y_locally_not_Linfty:eq:Hardy_op_compact} is valid. By combining~\eqref{prel:representation_space_restriction} with $\alpha = 0$ and \cref{rem:different_characterizations_of_ac_embedding_of_optimal_space}, we see that the validity of \eqref{thm:characterization_of_comp_emb_Y_locally_not_Linfty:eq:Hardy_op_compact} is equivalent to that of \eqref{thm:embedding_on_balls_compactness_not_Linfty:ac} with $\alpha = 0$, where $Y_X(B_R)$ is the optimal space from \cref{cor:embedding_on_balls_optimal_target} with $\alpha = 0$ and $X = X(B_R)$. Hence, the embedding $\wmrXB \hookrightarrow Y(B_R)$ is compact (in fact, even $W^mX(B_R) \hookrightarrow Y(B_R)$ is by~\cite[Theorem~1.1]{KP:08}). It follows that the sequence $\{u_j\chi_{B_R}\}_{j = 1}^\infty\subseteq\wmrXB$ contains a subsequence converging in $Y(B_R)$. Combining this with \eqref{thm:characterization_of_comp_emb_Y_locally_not_Linfty:eq:1}, we obtain the fact that $\{u_j\}_{j = 1}^\infty$ contains a subsequence that is Cauchy in $Y(\rn)$. Therefore, the embedding~\eqref{thm:characterization_of_comp_emb_Y:eq:embedding} is compact.

Now, assume that \eqref{thm:characterization_of_comp_emb_Y:eq:fund_Y_not_vanishing_at_0} is true. As in the preceding case, it remains for us to show that $\{u_j\chi_{B_R}\}_{j = 1}^\infty$ contains a subsequence that is Cauchy in $Y(\rn)$. To this end, we note that each of the conditions~\ref{enum:cond:m_less_than_n_Y_is_locally_Linfty}--\ref{enum:cond:m_bigger_than_n} is equivalent to the compactness of the embedding $W^mX(B_R)\hookrightarrow L^\infty(B_R)$. This follows from the explanation before \cref{thm:embedding_on_balls_compactness_not_Linfty} (see also~\cref{rem:embedding_on_balls_compactness:m_geq_n_not_interesting}). Since (see~\eqref{prel:Linfty_emb_X_emb_L1_finite_measure})
\begin{equation*}
\|u\chi_{B_R}\|_{Y(\rn)} \leq \|u\|_{L^\infty(\rn)} \fundX[Y](\omega_n R^n) \quad \text{for every $u\in\M(\rn)$},
\end{equation*}
the compactness of $W^mX(B_R)\hookrightarrow L^\infty(B_R)$ implies that $\{u_j\chi_{B_R}\}_{j = 1}^\infty$ contains a subsequence that is Cauchy in $Y(\rn)$. Finally, putting it all together, we obtain the existence of a subsequence of~$\{u_k\}_{k = 1}^\infty$ that is Cauchy in $Y(B_R)$.

%%%%%%%%%%%%%%%%%%%%%%%%%%%%%%%%%%%%%%%%%%%%%%%%%%%%%%%%%%%%%%%%%%%%%%%%%%%%%%%%%%%%%%%%%%%%%%%%%%%%%%%%%%%%%%%%%%%%%%%%%%%%%%%%%%%%

\emph{Necessity.}
Assume that the embedding $\wmrX\hookrightarrow Y(\rn)$ is compact. In particular, $\wmrX\hookrightarrow Y(\rn)$, and so there is a constant $C>0$ such that
\begin{equation}\label{thm:characterization_of_comp_emb_Y_locally_not_Linfty:eq:17}
\|u\|_{Y(\rn)} \leq C \|u\|_{\wmrX} \quad \text{for every $u\in\wmrX$}.
\end{equation}

We start by proving that \eqref{thm:characterization_of_comp_emb_Y:eq:globally_ac} is true. To this end, since every nonnegative nonincreasing function on $(0, \infty)$ is an a.e.\ pointwise limit of a nondecreasing sequence of nonnegative nonincreasing simple functions with bounded support, it is easy to see that
\begin{equation}\label{thm:characterization_of_comp_emb_Y_locally_not_Linfty:eq:18}
\sup_{\|f\|_{\rep{X}}\leq 1} \|f^*\chi_{(a,\infty)}\|_{\rep{Y}} = \sup_{\substack{\|f\|_{\rep{X}}\leq 1\\ f\in \mathcal S}} \|f\chi_{(a,\infty)}\|_{\rep{Y}}
\end{equation}
for every $a>0$, where
\begin{equation*}
\mathcal S = \{f\in\Mpl(0,\infty): \text{$f$ is a nonincreasing simple function with bounded support}\}.
\end{equation*}
Using \cref{prop:construction_of_radially_symmetric_function_vanishing_at_infinity}, we have
\begin{equation}\label{thm:characterization_of_comp_emb_Y_locally_not_Linfty:eq:19}
\sup_{\substack{\|f\|_{\rep{X}}\leq 1\\ f\in \mathcal S}} \|f\chi_{(a,\infty)}\|_{\rep{Y}} \leq C_2 \sup_{\substack{\|f\|_{\rep{X}}\leq 1\\ f\in \mathcal S}} \|u_{f,a}\|_{Y(\rn)}
\end{equation}
for every $a>0$, where $u_{f,a}\in \wmrX$ is the function from \cref{prop:construction_of_radially_symmetric_function_vanishing_at_infinity} assigned to $f\in\mathcal S$, and the constant $C_2$ is that from \eqref{prop:construction_of_radially_symmetric_function_vanishing_at_infinity:eq:tail_of_f_bounded_by_norm_of_u_fa}. Furthermore, combining \eqref{thm:characterization_of_comp_emb_Y_locally_not_Linfty:eq:17} and \eqref{prop:construction_of_radially_symmetric_function_vanishing_at_infinity:eq:sobolev_norm_bounded_by_norm_of_f}, we obtain
\begin{equation}\label{thm:characterization_of_comp_emb_Y_locally_not_Linfty:eq:20}
\sup_{\substack{\|f\|_{\rep{X}}\leq 1\\ f\in \mathcal S}} \|u_{f,a}\|_{Y(\rn)} \leq C C_1 \quad \text{for every $a>0$}.
\end{equation}
Now, note that
\begin{equation*}
\sup_{\|f\|_{\rep{X}}\leq 1} \|f^*\chi_{(a,\infty)}\|_{\rep{Y}} < \infty \quad \text{for every $a>0$}
\end{equation*}
thanks to \eqref{thm:characterization_of_comp_emb_Y_locally_not_Linfty:eq:19} and \eqref{thm:characterization_of_comp_emb_Y_locally_not_Linfty:eq:20}. In view of this and \eqref{thm:characterization_of_comp_emb_Y_locally_not_Linfty:eq:18}, we can find a sequence of functions $\{f_j\}_{j = 1}^\infty\subseteq \mathcal S$ such that $\|f_j\|_{\rep{X}}\leq 1$ and 
\begin{equation*}
\sup_{\|f\|_{\rep{X}}\leq 1} \|f^*\chi_{(j,\infty)}\|_{\rep{Y}} \leq 2 \|f_j\chi_{(j,\infty)}\|_{\rep{Y}} \quad \text{for every $j\in\N$}.
\end{equation*}
Furthermore, combining this with \eqref{prop:construction_of_radially_symmetric_function_vanishing_at_infinity:eq:tail_of_f_bounded_by_norm_of_u_fa}, we obtain
\begin{equation}\label{thm:characterization_of_comp_emb_Y_locally_not_Linfty:eq:21}
\sup_{\|f\|_{\rep{X}}\leq 1} \|f^*\chi_{(j,\infty)}\|_{\rep{Y}} \leq 2 C_2 \|u_j\|_{Y(\rn)} \quad \text{for every $j\in\N$},
\end{equation}
where $u_j = u_{f_j,j}$ for every $j\in\N$.

Since the embedding $\wmrX \hookrightarrow Y(\rn)$ is compact and the sequence $\{u_j\}_{j = 1}^\infty\subseteq \wmrX$ is bounded in $\wmrX$ thanks to \eqref{thm:characterization_of_comp_emb_Y_locally_not_Linfty:eq:20}, we may without loss of generality assume there is a function $v\in Y(\rn)$ such that
\begin{align*}
\lim_{j \to \infty} u_j &= u \quad \text{in $Y(\rn)$}\\
\intertext{and}
\lim_{j \to \infty} u_j(x) &= u(x) \quad \text{for a.e.\ $x\in\rn$}.
\end{align*}
Therefore, since the latter together with \eqref{prop:construction_of_radially_symmetric_function_vanishing_at_infinity:eq:function_vanishes_as_a_increases} implies that $u = 0$, we have
\begin{equation}\label{thm:characterization_of_comp_emb_Y_locally_not_Linfty:eq:22}
\lim_{j \to \infty} \|u_j\|_{Y(\rn)} = 0.
\end{equation}
Finally, combining \eqref{thm:characterization_of_comp_emb_Y_locally_not_Linfty:eq:21} and \eqref{thm:characterization_of_comp_emb_Y_locally_not_Linfty:eq:22}, we see that
\begin{equation*}
\lim_{j\to \infty} \sup_{\|f\|_{\rep{X}}\leq 1} \|f^*\chi_{(j,\infty)}\|_{\rep{Y}} = 0,
\end{equation*}
whence \eqref{thm:characterization_of_comp_emb_Y:eq:globally_ac} follows.

Next, if $m > n$, there is nothing else to prove, whether \eqref{thm:characterization_of_comp_emb_Y:eq:fund_Y_vanishes_0} or \eqref{thm:characterization_of_comp_emb_Y:eq:fund_Y_not_vanishing_at_0} is the case. For future reference, note that for every \riSp{} $Z(\rn)$ such that $\fundX[Z](0^+) > 0$, we have
\begin{equation}\label{thm:characterization_of_comp_emb_Y_is_locally_Linfty:eq:1}
\|v\|_{L^\infty(\rn)} = \lim_{t\to0^+} v^*(t) \leq \lim_{t\to0^+} \frac1{\fundX[Z](t)} \|v\|_{Z(\rn)} = C_{Z} \|v\|_{Z(\rn)}
\end{equation}
for every $v\in\M(\rn)$, where $C_Z = 1/\fundX[Z](0^+) \in (0 ,\infty)$. When $m = n$ and \eqref{thm:characterization_of_comp_emb_Y:eq:fund_Y_vanishes_0} is the case, there is also nothing else to prove. Let $m = n$ and \eqref{thm:characterization_of_comp_emb_Y:eq:fund_Y_not_vanishing_at_0} be the case. We need to show that \eqref{thm:characterization_of_comp_emb_Y_is_locally_Linfty:eq:X_locally_not_L1} is valid. Supposed that \eqref{thm:characterization_of_comp_emb_Y_is_locally_Linfty:eq:X_locally_not_L1} is not valid. That together with \eqref{prel:fund_of_X_times_of_Xasoc} means that
\begin{equation*}
\lim_{t\to 0^+} \fundX[X'](t) > 0.
\end{equation*}
Hence
\begin{equation}\label{thm:characterization_of_comp_emb_Y_is_locally_Linfty:eq:2}
\|v\|_{L^\infty(\rn)} \leq C_{X'} \|v\|_{X'(\rn)} \quad \text{for every $v\in\M(\rn)$}
\end{equation}
thanks to~\eqref{thm:characterization_of_comp_emb_Y_is_locally_Linfty:eq:1} with $Z = X'$, where $C_{X'}\in(0, \infty)$. Furthermore, we have
\begin{align}
\|u\|_{X(\rn)} &= \sup_{\|v\|_{X'(\rn)} \leq 1} \int_{\rn} |u(x)| |v(x)| \dd x \nonumber\\
& \leq \|u\|_{L^1(\rn)} \sup_{\|v\|_{X'(\rn)} \leq 1} \|v\|_{L^\infty(\rn)} \nonumber\\
& \leq C_{X'} \|u\|_{L^1(\rn)} \label{thm:characterization_of_comp_emb_Y_is_locally_Linfty:eq:3}
\end{align}
for every $u\in\M(\rn)$ thanks to \eqref{prel:double_associate_norm}, the H\"older inequality \eqref{prel:Holder}, and \eqref{thm:characterization_of_comp_emb_Y_is_locally_Linfty:eq:2}. Now, let $g\in\mathcal C^\infty_0(\R)$ be an arbitrary smooth function such that $g\equiv 1$ on $[0, 1/2]$ and $g\equiv0$ on $[3/4, \infty)$. Set $u(x) = g(|x|)$, $x\in\rn$. It is easy to see that $u\in\mathcal C^\infty_0(\rn)$. Furthermore, set $u_j(x) = u(jx)$, $x\in\rn$, for every $j\in\N$. Note that
\begin{equation}\label{thm:characterization_of_comp_emb_Y_is_locally_Linfty:eq:4}
\spt u_j \subseteq B_{1/j} \quad \text{for every $j\in\N$}.
\end{equation}
Furthermore, it can be readily verified that
\begin{align*}
\|\nabla^k u_j\|_{L^1(\rn)} = j^k \|(\nabla^k u)(j x)\|_{L^1(\rn)} = j^{k - n} \|\nabla^k u\|_{L^1(\rn)} \leq \|\nabla^k u\|_{L^1(\rn)} < \infty
\end{align*}
for all $j\in\N$ and $k = 0, \dots, n$. Hence, thanks to this and \eqref{thm:characterization_of_comp_emb_Y_is_locally_Linfty:eq:3}, we see that $\{u_j\}_{j = 1}^\infty$ is a bounded sequence in $\wmrX[n]$. Now, we clearly have
\begin{equation*}
\|u_j\|_{L^\infty(\rn)} = \|u\|_{L^\infty(\rn)} \quad \text{for every $j\in\N$}.
\end{equation*}
Therefore, we obtain
\begin{equation*}
\|u_j\|_{Y(\rn)} \geq \frac{\|u\|_{L^\infty(\rn)}}{C_Y} > 0 \quad \text{for every $j\in\N$}
\end{equation*}
thanks to \eqref{thm:characterization_of_comp_emb_Y:eq:fund_Y_not_vanishing_at_0} and \eqref{thm:characterization_of_comp_emb_Y_is_locally_Linfty:eq:1} with $Z = Y$, where $C_Y\in(0, \infty)$. It follows from this and \eqref{thm:characterization_of_comp_emb_Y_is_locally_Linfty:eq:4} that $\{u_j\}_{j = 1}^\infty$ does not contain a subsequence convergent in $Y(\rn)$. Hence, the embedding $\wmrX[n] \hookrightarrow Y(\rn)$ is not compact.

Finally, let $1\leq m < n$. First, assume that \eqref{thm:characterization_of_comp_emb_Y:eq:fund_Y_vanishes_0} is true. We need to show that~\eqref{thm:characterization_of_comp_emb_Y_locally_not_Linfty:eq:Hardy_op_compact} is valid. Suppose that \eqref{thm:characterization_of_comp_emb_Y_locally_not_Linfty:eq:Hardy_op_compact} is not true. By \eqref{prel:representation_space_restriction}, we have
\begin{equation}\label{thm:characterization_of_comp_emb_Y_locally_not_Linfty:eq:23}
\lim_{a\to 0^+} \sup_{\|f\|_{\repFin{X_1}}\leq1} \Big\| \chi_{(0,a)}(t) \int_t^1 f^*(s) s^{-1 + \frac{m}{n}} \dd s \Big\|_{\repFin{Y_1}} > 0,
\end{equation}
where $X_1 = X(B_1)$ and $Y_1 = Y(B_1)$. By~\cref{rem:different_characterizations_of_ac_embedding_of_optimal_space}, \eqref{thm:characterization_of_comp_emb_Y_locally_not_Linfty:eq:23} is equivalent to the fact that \eqref{thm:embedding_on_balls_compactness_not_Linfty:ac} is not valid with $X=X_1$, $Y=Y_1$, $R = 1$, and $\alpha = 0$. Hence, \cref{thm:embedding_on_balls_compactness_not_Linfty} combined with \cref{rem:embedding_on_balls_compactness_functions_with_zero_boundary_data} implies that the embedding $\wmrXBvan[m][X][1]\hookrightarrow Y(B_1)$ is not compact, where $\wmrXBvan[m][X][1]$ is defined by \eqref{rem:embedding_on_balls_compactness_functions_with_zero_boundary_data:def}. However, we have $\wmrXBvan[m][X][1]\hookrightarrow \wmrX$ and $\|u\|_{Y(B_1)} = \|u\|_{Y(\rn)}$ for every $u\in \wmrXBvan[m][X][1]$, provided that we extend the functions from $\wmrXBvan[m][X][1]$ by $0$ outside $B_1$. Therefore, it follows that the embedding \eqref{thm:characterization_of_comp_emb_Y:eq:embedding} is not compact. Now, assume that \eqref{thm:characterization_of_comp_emb_Y:eq:fund_Y_not_vanishing_at_0} is the case. We will show that \eqref{thm:characterization_of_comp_emb_Y_is_locally_Linfty:eq:Hardy_op_compact} is satisfied, which will conclude the entire proof of~\cref{thm:characterization_of_comp_emb_Y}. By~\eqref{prel:representation_space_restriction}, it is easy to see that \eqref{thm:characterization_of_comp_emb_Y_is_locally_Linfty:eq:Hardy_op_compact} is satisfied if and only if
\begin{equation}\label{thm:characterization_of_comp_emb_Y_is_locally_Linfty:eq:5} 
\lim_{a\to 0^+} \sup_{\|f\|_{\repFin{X_1}}\leq1} \Big\| \int_t^1 \big( f^*(s)\chi_{(0,a)}(s) \big) s^{-1+\frac{m}{n}} \dd s \Big\|_{L^\infty(0,1)} = 0,
\end{equation}
where $X_1 = X(B_1)$. Since $\min\{f^*\chi_{(0,a)},N\}\nearrow f^*\chi_{(0,a)}$ as $N\to\infty$, we have
\begin{align*}
&\sup_{\|f\|_{\repFin{X_1}}\leq1} \Big\| \int_t^1 \big( f^*(s)\chi_{(0,a)}(s) \big) s^{-1+\frac{m}{n}} \dd s \Big\|_{L^\infty(0,1)} \\
&= \sup_{\substack{\|f\|_{\repFin{X_1}}\leq1\\ f\in L^\infty(0,1)}} \Big\| \int_t^1 \big( f^*(s)\chi_{(0,a)}(s) \big) s^{-1+\frac{m}{n}} \dd s \Big\|_{L^\infty(0,1)}
\end{align*}
for every $a\in(0,1)$. Furthermore, thanks to \eqref{thm:characterization_of_comp_emb_Y:eq:fund_Y_not_vanishing_at_0} and \eqref{thm:characterization_of_comp_emb_Y_is_locally_Linfty:eq:1} with $Z = Y$, the compactness of \eqref{thm:characterization_of_comp_emb_Y:eq:embedding} implies that the embedding $\wmrX\hookrightarrow L^\infty(\rn)$ is also compact. In particular, it is bounded. Therefore, the suprema are finite. Hence, for each $j\in\N$, there is a function $f_j\in L^\infty(0,1)$ such that $\|f_j\|_{\repFin{X_1}}\leq1$ and
\begin{align}
&\sup_{\|f\|_{\repFin{X_1}}\leq1} \Big\| \int_t^1 \big( f^*(s)\chi_{(0,1/j)}(s) \big) s^{-1+\frac{m}{n}} \dd s \Big\|_{L^\infty(0,1)} \nonumber\\
&\leq 2 \Big\| \int_t^1 \big( f_j^*(s)\chi_{(0,1/j)}(s) \big) s^{-1+\frac{m}{n}} \dd s \Big\|_{L^\infty(0,1)}. \label{thm:characterization_of_comp_emb_Y_is_locally_Linfty:eq:6} 
\end{align}
Now, thanks to \cref{prop:construction_of_radially_symmetric_function_local_necessity_Hardy} with $Y = L^\infty$, $R = 1$, $\alpha = 0$, $f = f_j^*\chi_{(0,1/j)}$, and $a = 2/j$, there are functions $u_j = u_{f_j,1,2/j}$, $j\in\N$, $j\geq3$, such that (see also~\cref{rem:continuation_outside_by_zero})
\begin{align*}
&\text{the sequence $\{u_j\}_{j = 3}^\infty$ is bounded in $\wmrX$}; \\
&\lim_{j\to\infty} u_j(x) = 0 \quad \text{for every $x\in\rn\setminus\{0\}$}; \\
&\Big\| \int_t^1 \big( f_j^*(s)\chi_{(0,1/j)}(s) \big) s^{-1+\frac{m}{n}} \dd s \Big\|_{L^\infty(0,1)} \leq C_4 \|u_j\|_{L^\infty(\rn)},
\end{align*}
where $C_4$ is the constant from \eqref{prop:construction_of_radially_symmetric_function_local_necessity_Hardy:eq:Hardy_bounded_by_u}. The compactness of the embedding $\wmrX\hookrightarrow L^\infty(\rn)$ now implies that
\begin{equation*}
\lim_{j\to\infty} \Big\| \int_t^1 \big( f_j^*(s)\chi_{(0,1/j)}(s) \big) s^{-1+\frac{m}{n}} \dd s \Big\|_{L^\infty(0,1)} = 0.
\end{equation*}
Combining this with \eqref{thm:characterization_of_comp_emb_Y_is_locally_Linfty:eq:6}, we obtain the validity of \eqref{thm:characterization_of_comp_emb_Y_is_locally_Linfty:eq:5}. Hence, so is \eqref{thm:characterization_of_comp_emb_Y_is_locally_Linfty:eq:Hardy_op_compact} in turn, which finishes the proof.
\end{proof}

%%%%%%%%%%%%%%%%%%%%%%%%%%%%%%%%%%%%%%%%%%%%%%%%%%%%%%%%%%%%%%%%%%%%%%%%%%%%%%%%%%%%%%%%%%%%%%%%%%%%%%%%%%%%%%%%%%%%%%%%%%%%%%%%%%%%

\section{Examples}\label{sec:examples}
In this section, we completely characterize the compactness of the Sobolev embedding \eqref{thm:characterization_of_comp_emb_Y:eq:embedding} and of the weighted Sobolev embedding \eqref{E:embedding_on_balls_Sob_emb} on balls for concrete examples of \riSps{}. More specifically, we consider the situation where both $X$ and $Y$ are from the class of \emph{Lorentz--Zygmund spaces}. This class of function spaces contains many customary function spaces. For example, it contains Lebesgue spaces, Lorentz space, or some Orlicz spaces\textemdash namely those of logarithmic and exponential type (we will get back to this in more detail soon). We start by briefly introducing the Lorentz--Zygmund spaces (the interested reader can find more information in~\cite{BR:80, OP:99}).

For $p,q\in[1, \infty]$ and $\A=(\alpha_0, \alpha_\infty)\in\R^2$, we define the functional $\|\cdot\|_{L^{p,q,\A}(0, \infty)}$ as
\begin{equation*}
\|f\|_{L^{p,q,\A}(0, \infty)} = \|t^{\frac1{p} - \frac1{q}}\ell^\A(t) f^*(t)\|_{L^q(0, \infty)},\ f\in\Mpl(0, \infty),
\end{equation*}
where the functions $\ell\colon(0, \infty)\to(0, \infty)$ and $\ell^\A$ are defined as
\begin{align}
\ell(t) &= 1 + |\log(t)|,\ t\in(0, \infty), \nonumber
\intertext{and}
\ell^\A(t) &= \begin{cases}
	\ell(t)^{\alpha_0} \quad &\text{if $t\in(0, 1]$},\\
	\ell(t)^{\alpha_\infty} \quad &\text{if $t\in(1, \infty)$}.
\end{cases} \label{EQ:broken_log_A}
\end{align}
The functional $\|\cdot\|_{L^{p,q,\A}(0, \infty)}$ is equivalent to an \riNorm{} if and only if one of the following is true:
\begin{equation}\label{EQ:LZ_equiv_RI}
\begin{cases}
p=q=1, \alpha_0\geq0, \text{ and } \alpha_\infty\leq0;\\
p\in(1, \infty) \text{ and } q\in[1, \infty];\\
p=\infty, q\in[1, \infty), \text{ and } \alpha_0 + \frac1{q} < 0;\\
p=q=\infty \text{ and } \alpha_0\leq 0.
\end{cases}
\end{equation}
When this is the case, we define the Lorentz--Zygmund space $L^{p,q,\A}(\rn)$ as
\begin{equation*}
L^{p,q,\A}(\rn) = \{f\in\M(\rn): \|f\|_{L^{p,q,\A}(\rn)} = \|f^*\|_{L^{p,q,\A}(0, \infty)} < \infty\}
\end{equation*}
and consider it an \riSp{}.

Note that we have $L^p(\rn) = L^{p,p,(0,0)}(\rn)$ and $L^{p,q}(\rn) = L^{p,q,(0,0)}(\rn)$. Furthermore, when $p=q\in[1, \infty]$, the Lorentz--Zygmund spaces $L^{p,p,\A}(\rn)$ coincide with the following well-known and oft-used Orlicz spaces, up to equivalent norms. When $p=q\in[1, \infty)$, we have $L^{p,p,\A}(\rn)=L^A(\rn)$ with $A$ being equivalent to the function $(0, \infty)\ni t\mapsto t^p\ell(t)^{(p\alpha_\infty, p\alpha_0)}$ (here we assume $\alpha_0\geq0$ and $\alpha_\infty\leq0$ when $p=q=1$). When $p=q=\infty$, $\alpha_0<0$, and $\alpha_\infty > 0$, we have $L^{\infty,\infty,\A}(\rn)=L^A(\rn)$ with $A$ being equivalent to the function $t\mapsto \exp(-t^{-1/\alpha_\infty})$ near $0$ and to the function $t\mapsto\exp(t^{-1/\alpha_0})$ near infinity.

Sometimes it is necessary to work with more tiers of logarithms\textemdash particularly to capture various delicate limiting situations. We define the function $\ell\ell\colon(0, \infty) \to (0, \infty)$ as
\begin{equation*}
\ell\ell(t) = 1+\log(\ell(t)),\ t\in(0, \infty),
\end{equation*}
and $\ell\ell^\B$, for $\B=(\beta_0, \beta_\infty)\in\R^2$, as in \eqref{EQ:broken_log_A} with $\ell$ and $\A$ replaced by $\ell\ell$ and $\B$, respectively. We can then define $\|f\|_{L^{p,q,\A,\B}(0, \infty)}$ as
\begin{equation*}
\|f\|_{L^{p,q,\A,\B}(0, \infty)} = \|t^{\frac1{p} - \frac1{q}}\ell^\A(t)\ell\ell^\B(t) f^*(t)\|_{L^q(0, \infty)},\ f\in\Mpl(0, \infty).
\end{equation*}
When even more tiers of logarithms are needed, they can be added in the obvious way.

Finally, when $(E, \mu)$ is a finite nonatomic measure space, we define, for $p,q\in[1, \infty]$ and $\alpha\in\R$, the Lorentz--Zygmund space $L^{p,q,\alpha}(E,\mu)$ as
\begin{equation*}
L^{p,q,\alpha}(E,\mu) = \{f\in\M(E,\mu): \|f\|_{L^{p,q,\alpha}(E,\mu)} = \|f^*\chi_{(0, \mu(E))}\|_{L^{p,q,(\alpha,0)}(0, \infty)} < \infty\}.
\end{equation*}
The functional $\|\cdot\|_{L^{p,q,\alpha}(E,\mu)}$ is equivalent to an \riNorm{} if and only if one of the conditions \eqref{EQ:LZ_equiv_RI} is satisfied with $\alpha_0 = \alpha$ and $\alpha_\infty$ disregarded. When this is the case, we consider $L^{p,q,\alpha}(E,\mu)$ an \riSp{}.
\subsection{Sobolev embeddings on the entire space}
Having introduce the Lorentz--Zygmund spaces, we now completely characterize the compactness of the Sobolev embedding
\begin{equation}\label{thm:compactness_entire_space_LZ_examples:Sob_emb}
\wmrX[m][L^{p,q,\A}] \hookrightarrow L^{r,s,\B}(\rn),
\end{equation}
which covers a large number of usual situations (see also \cref{thm:compactness_entire_space_LLogL_examples}).
\begin{theorem}\label{thm:compactness_entire_space_LZ_examples}
Let $m,n\in\N$, $n\geq2$. Let $p,q,r,s\in[1, \infty]$ and $\A = (\alpha_0, \alpha_\infty)$, $\B = (\beta_0, \beta_\infty)\in\R^2$. Assume that the parameters are such that $L^{p,q,\A}(\rn)$ and $L^{r,s,\B}(\rn)$ are (equivalent to) \riSps{} (recall~\eqref{EQ:LZ_equiv_RI}). Consider the following nine conditions:
\begin{enumerate}[label=(C\arabic*), ref=(C\arabic*)]
	\item\label{enum:thm:compactness_entire_space_LZ_examples_AC_near_infinity:1} $p < r$;
	\item\label{enum:thm:compactness_entire_space_LZ_examples_AC_near_infinity:2} $p = r < \infty$ and $\beta_\infty + \max\{1/s - 1/q, 0\} < \alpha_\infty$;
	\item $p = r = \infty$, $\alpha_\infty + 1/q > 0$, and $\alpha_\infty + 1/q > \beta_\infty + 1/s$;
	\item\label{enum:thm:compactness_entire_space_LZ_examples_AC_near_infinity:4} $p = r = \infty$, $q < \infty$, and $0 = \alpha_\infty + 1/q > \beta_\infty + 1/s$;
	\item\label{enum:thm:compactness_entire_space_LZ_examples_AC_near_infinity:5} $p = r = \infty$, $q < s$, and $\alpha_\infty + 1/q = \beta_\infty + 1/s = 0$;
	\item\label{enum:thm:compactness_entire_space_LZ_locally_compact:1} $p\leq \frac{n}{m}$ and $r<\frac{np}{n-mp}$;
	\item $p < \frac{n}{m}$, $r = \frac{np}{n-mp}$, and $\beta_0 < \alpha_0 + \min\Big\{ \frac1{q}-\frac1{s},0 \Big\}$;
	\item $p = \frac{n}{m}$, $r = \infty$, $\alpha_0 \leq 1 - \frac1{q}$, and $\beta_0 < \alpha_0 - 1 +\frac1{q}-\frac1{s}$;
	\item\label{enum:thm:compactness_entire_space_LZ_locally_compact:4} either $p = \frac{n}{m}$ and $\alpha_0 > 1 - \frac1{q}$ or $p > \frac{n}{m}$.
\end{enumerate}

When $m<n$, the Sobolev embedding \eqref{thm:compactness_entire_space_LZ_examples:Sob_emb} is compact if and only if one of the conditions \ref{enum:thm:compactness_entire_space_LZ_examples_AC_near_infinity:1}--\ref{enum:thm:compactness_entire_space_LZ_examples_AC_near_infinity:5} is satisfied and simultaneously so is one of \ref{enum:thm:compactness_entire_space_LZ_locally_compact:1}--\ref{enum:thm:compactness_entire_space_LZ_locally_compact:4}.

When $m=n$, the Sobolev embedding \eqref{thm:compactness_entire_space_LZ_examples:Sob_emb} is compact if and only if one of the conditions \ref{enum:thm:compactness_entire_space_LZ_examples_AC_near_infinity:1}--\ref{enum:thm:compactness_entire_space_LZ_examples_AC_near_infinity:5} is satisfied and simultaneously either $p=q=1$ and $\alpha_0=0$ is not true or $r = s = \infty$ and $\beta_0 = 0$ is not true.

When $m>n$, the Sobolev embedding \eqref{thm:compactness_entire_space_LZ_examples:Sob_emb} is compact if and only if one of the conditions \ref{enum:thm:compactness_entire_space_LZ_examples_AC_near_infinity:1}--\ref{enum:thm:compactness_entire_space_LZ_examples_AC_near_infinity:5} is satisfied.
\end{theorem}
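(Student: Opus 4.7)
I would apply \cref{thm:characterization_of_comp_emb_Y} with $X(\rn)=L^{p,q,\A}(\rn)$ and $Y(\rn)=L^{r,s,\B}(\rn)$, and translate each of its abstract conditions into concrete restrictions on the six parameters $p,q,r,s,\A,\B$. The argument splits independently into the analysis of the global condition~\eqref{thm:characterization_of_comp_emb_Y:eq:globally_ac}, which yields \ref{enum:thm:compactness_entire_space_LZ_examples_AC_near_infinity:1}--\ref{enum:thm:compactness_entire_space_LZ_examples_AC_near_infinity:5}, and the analysis of the local conditions~\eqref{thm:characterization_of_comp_emb_Y:eq:fund_Y_vanishes_0}--\eqref{thm:characterization_of_comp_emb_Y_is_locally_Linfty:eq:X_locally_not_L1}, whose concrete forms depend on whether $m<n$, $m=n$, or $m>n$.

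For the global step I would compute the quantity $\Phi(a)=\sup_{\|f\|_{L^{p,q,\A}(\rn)}\leq 1}\|f^*\chi_{(a,\infty)}\|_{L^{r,s,\B}(\rn)}$ for large $a$. Since the relevant non-increasing functions live on $(a,\infty)\subseteq(1,\infty)$, the weights $\ell(t)^\A$ and $\ell(t)^\B$ reduce to $\ell(t)^{\alpha_\infty}$ and $\ell(t)^{\beta_\infty}$, so up to equivalence of norms $\Phi(a)$ equals the operator norm of the identity $L^{p,q,\alpha_\infty}(a,\infty)\hookrightarrow L^{r,s,\beta_\infty}(a,\infty)$ restricted to non-increasing functions. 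A dyadic decomposition of $(a,\infty)$ (or equivalently a scaling $t\mapsto at$ together with the standard Lorentz--Zygmund calculus) makes visible that the decay of $\Phi(a)$ is governed by the competition between the power weight $t^{1/p-1/r}$ and the logarithmic weight $\ell(t)^{\alpha_\infty-\beta_\infty}$; the cases \ref{enum:thm:compactness_entire_space_LZ_examples_AC_near_infinity:1}--\ref{enum:thm:compactness_entire_space_LZ_examples_AC_near_infinity:5} exhaust precisely the parameter regimes where this competition produces $\Phi(a)\to 0$. Sharpness in each subcase follows from testing the supremum with truncations of $t^{-1/p}\ell(t)^{-\alpha_\infty-1/q-\varepsilon}$, while \cref{prop:decay_at_infinity_implies_decay_of_fund_func} applied to the fundamental functions $\varphi_X(t)\sim t^{1/p}\ell(t)^{\alpha_\infty}$ and $\varphi_Y(t)\sim t^{1/r}\ell(t)^{\beta_\infty}$ at infinity gives the necessity of the first-index inequality $p\leq r$ at once.

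For the local step with $m<n$, conditions~\eqref{thm:characterization_of_comp_emb_Y_locally_not_Linfty:eq:Hardy_op_compact} and~\eqref{thm:characterization_of_comp_emb_Y_is_locally_Linfty:eq:Hardy_op_compact} characterize the (almost-)compactness of the weighted Hardy operator $f\mapsto \int_t^1 f^*(s)s^{-1+m/n}\,\mathrm{d}s$ acting between the Lorentz--Zygmund space $\repFin{X}$ and either $\repFin{Y}$ or $L^\infty(0,1)$. Via \cref{rem:different_characterizations_of_ac_embedding_of_optimal_space} the first is equivalent to an almost-compact embedding of the optimal target space furnished by \cref{cor:embedding_on_balls_optimal_target}, whose Lorentz--Zygmund form is classical; via \eqref{prel:associate_norm} the second reduces to the almost-compact embedding $\repFin{X}\ac L^{n/m,1}(0,1)$. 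Invoking the known criteria for embedding and almost-compact embedding within the Lorentz--Zygmund scale (cf.~\cite{KP:06,OP:99}) produces exactly~\ref{enum:thm:compactness_entire_space_LZ_locally_compact:1}--\ref{enum:thm:compactness_entire_space_LZ_locally_compact:4}; in particular~\ref{enum:thm:compactness_entire_space_LZ_locally_compact:4} is exactly the range in which $\repFin{X}\hookrightarrow L^{n/m,1}(0,1)$, forcing local boundedness. For $m=n$ and $m>n$ only the fundamental-function dichotomies remain: a direct computation shows $\varphi_Y(0^+)>0$ iff $r=s=\infty$ and $\beta_0=0$, and $\lim_{t\to 0^+}t/\varphi_X(t)=0$ iff $(p,q,\alpha_0)\neq(1,1,0)$, which combined with the outcome of the global step gives the remaining two parts of the statement.

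The main obstacle will be the boundary subcases in which several parameter inequalities degenerate to equalities simultaneously, so that the decisive terms are logarithmic rather than power-type. In particular, the transitions between \ref{enum:thm:compactness_entire_space_LZ_examples_AC_near_infinity:2}, \ref{enum:thm:compactness_entire_space_LZ_examples_AC_near_infinity:4}, and \ref{enum:thm:compactness_entire_space_LZ_examples_AC_near_infinity:5} on the global side and the analogous limiting ranges inside \ref{enum:thm:compactness_entire_space_LZ_locally_compact:1}--\ref{enum:thm:compactness_entire_space_LZ_locally_compact:4} on the local side require careful tracking of the comparison $\alpha_\infty+1/q$ versus $\beta_\infty+1/s$ (and of its local counterpart); here the systematic test functions $f^*(t)=t^{-1/p}\ell(t)^{-\alpha-1/q}$, suitably truncated to either $(0,a)$ or $(a,\infty)$, deliver the sharpness of every equality in the statement.
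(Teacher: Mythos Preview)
Your proposal is correct and follows essentially the same route as the paper. The paper's proof of this theorem is a two-line reduction: it invokes \cref{thm:characterization_of_comp_emb_Y}, then cites \cref{prop:ac_near_infinity_LZ_examples} (which is exactly your global analysis yielding \ref{enum:thm:compactness_entire_space_LZ_examples_AC_near_infinity:1}--\ref{enum:thm:compactness_entire_space_LZ_examples_AC_near_infinity:5}) and \cref{thm:weighted_Sob_ball_LZ_examples_compactness} with $\alpha=0$ (which packages your local analysis yielding \ref{enum:thm:compactness_entire_space_LZ_locally_compact:1}--\ref{enum:thm:compactness_entire_space_LZ_locally_compact:4} via the optimal target space and known almost-compact embedding criteria for Lorentz--Zygmund spaces). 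One minor technical difference: in the borderline necessity case $p=r<\infty$, $q>s$, $\beta_\infty<\alpha_\infty\leq\beta_\infty+1/s-1/q$ with $q<\infty$, the paper does not rely on a single family of test functions but instead invokes Sawyer's weighted Lorentz characterization \cite{S:90} to evaluate the supremum exactly; your test-function approach should still work there, but the computation is more delicate than the sketch suggests.
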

\begin{proof}
The theorem follows from \cref{thm:characterization_of_comp_emb_Y} combined with \cref{thm:weighted_Sob_ball_LZ_examples_compactness} with $\alpha = 0$ and \cref{prop:ac_near_infinity_LZ_examples}, which will be proved in the rest of this section.
\end{proof}

Recalling \cref{thm:characterization_of_comp_emb_Y}, we see that the validity of \eqref{thm:characterization_of_comp_emb_Y:eq:globally_ac} plays an essential role in the question of whether the Sobolev embedding \eqref{thm:characterization_of_comp_emb_Y:eq:embedding} is compact. Therefore, in order to characterize the compactness of \eqref{thm:compactness_entire_space_LZ_examples:Sob_emb}, we need to know when \eqref{thm:characterization_of_comp_emb_Y:eq:globally_ac} is valid when both $X$ and $Y$ are Lorentz--Zygmund spaces. Before we characterize it, we establish the following simple auxiliary proposition, which will be useful later.
\begin{proposition}\label{prop:cutting_of_weighted_norm}
Let $X(0, \infty)$ and $Z(0, \infty)$ be \riSps{}. Let $\|\cdot\|_{Y(0, \infty)}$ be defined as
\begin{equation}\label{prop:cutting_of_weighted_norm:def_of_Y}
\|g\|_{Y(0, \infty)} = \|g^*(t) w(t)\|_{Z(0, \infty)},\ g\in\M(0, \infty),
\end{equation}
where $w\in\Mpl(0, \infty)$. Then
\begin{equation*}
\sup_{\|f\|_{X(0, \infty)}\leq1} \|f^*\chi_{(a, \infty)}\|_{Y(0, \infty)} \leq \frac{\fundX[Y](a)}{\fundX(a)} + \sup_{\|f\|_{X(0, \infty)}\leq1} \|f^*(t)w(t)\chi_{(a, \infty)}(t)\|_{Z(0, \infty)}
\end{equation*}
for every $a\in(0, \infty)$.
\end{proposition}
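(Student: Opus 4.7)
The plan is direct and rests on a single rearrangement identity. First I would compute the nonincreasing rearrangement of $f^*\chi_{(a,\infty)}$: since $f^*$ is nonincreasing and right-continuous, the superlevel set $\{s>0:f^*(s)>\lambda\}$ is the interval $(0, f_*(\lambda))$ (possibly empty), so
\[
(f^*\chi_{(a,\infty)})_*(\lambda) = |\{s>a:f^*(s)>\lambda\}| = (f_*(\lambda)-a)_+,
\]
and taking the generalized inverse yields $(f^*\chi_{(a,\infty)})^*(t)=f^*(t+a)$ for every $t>0$. The defining formula~\eqref{prop:cutting_of_weighted_norm:def_of_Y} then gives
\[
\|f^*\chi_{(a,\infty)}\|_{Y(0,\infty)} = \|f^*(t+a)\,w(t)\|_{Z(0,\infty)}.
\]

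Next I would split the $Z$-norm across $(0,a)$ and $(a,\infty)$ via the triangle inequality and estimate the two pieces separately. On $(0,a)$, the monotonicity of $f^*$ gives $f^*(t+a)\leq f^*(a)$, so the first piece is dominated by
\[
f^*(a)\,\|w\chi_{(0,a)}\|_{Z(0,\infty)} = f^*(a)\,\fundX[Y](a),
\]
where the equality uses $\chi_{(0,a)}^*=\chi_{(0,a)}$ together with~\eqref{prop:cutting_of_weighted_norm:def_of_Y} to identify $\|w\chi_{(0,a)}\|_{Z(0,\infty)}$ with $\|\chi_{(0,a)}\|_{Y(0,\infty)}=\fundX[Y](a)$. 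The standard inequality $f^*(a)\,\fundX(a)\leq \|f\|_{X(0,\infty)}\leq 1$, which is immediate from $f^*(a)\chi_{(0,a)}\leq f^*$ and the monotonicity of the \riNorm{} $\|\cdot\|_{X(0,\infty)}$, then contributes exactly the summand $\fundX[Y](a)/\fundX(a)$.

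For the complementary piece, monotonicity of $f^*$ again yields $f^*(t+a)\leq f^*(t)$ on $(a,\infty)$, whence
\[
\|f^*(t+a)\,w(t)\,\chi_{(a,\infty)}(t)\|_{Z(0,\infty)} \leq \|f^*(t)\,w(t)\,\chi_{(a,\infty)}(t)\|_{Z(0,\infty)}.
\]
Combining both estimates and taking the supremum over $\|f\|_{X(0,\infty)}\leq 1$ on each term of the resulting sum concludes the proof. The only nonroutine step is the elementary rearrangement identity $(f^*\chi_{(a,\infty)})^*(t)=f^*(t+a)$; everything else is a matter of monotonicity and the definition of the fundamental function.
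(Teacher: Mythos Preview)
Your proof is correct and follows essentially the same route as the paper's. The only cosmetic difference is that the paper, instead of computing the rearrangement identity $(f^*\chi_{(a,\infty)})^*(t)=f^*(t+a)$ explicitly, bounds $f^*\chi_{(a,\infty)}$ above by the nonincreasing function $f^*(a)\chi_{(0,a)}+f^*\chi_{(a,\infty)}$ (whose rearrangement is itself) and then splits; after that the two arguments coincide word for word.
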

\begin{proof}
Fix $a>0$. Let $f\in\M(0, \infty)$. Note that
\begin{equation*}
f^*(a) = \frac1{\fundX(a)} \|f^*(a)\chi_{(0,a)}(t)\|_{X(0, \infty)} \leq \frac{\|f\|_{X(0, \infty)}}{\fundX(a)}.
\end{equation*}
Using this together with \eqref{prop:cutting_of_weighted_norm:def_of_Y}, we see that
\begin{align*}
\|f^*\chi_{(a, \infty)}\|_{Y(0, \infty)} &\leq \|(f^*(a) \chi_{(0,a)} + f^*\chi_{(a, \infty)})^*(t) w(t)\|_{Z(0, \infty)} \\
&= \|(f^*(a) \chi_{(0,a)}(t) + f^*(t)\chi_{(a, \infty)}(t)) w(t)\|_{Z(0, \infty)} \\
&\leq \|f^*(a) \chi_{(0,a)}(t) w(t)\|_{Z(0, \infty)} + \|f^*(t)w(t)\chi_{(a, \infty)}(t)\|_{Z(0, \infty)} \\
&\leq \frac{\|\chi_{(0,a)}(t) w(t)\|_{Z(0, \infty)}}{\fundX(a)} \|f\|_{X(0, \infty)} + \|f^*(t)w(t)\chi_{(a, \infty)}(t)\|_{Z(0, \infty)} \\
&= \frac{\|\chi_{(0,a)}\|_{Y(0, \infty)}}{\fundX(a)} \|f\|_{X(0, \infty)} + \|f^*(t)w(t)\chi_{(a, \infty)}(t)\|_{Z(0, \infty)},
\end{align*}
whence the desired inequality follows.
\end{proof}

\begin{remark}
Note that we abuse the notation of a fundamental function in \cref{prop:cutting_of_weighted_norm} because the functional $\|\cdot\|_{Y(0, \infty)}$ defined by \eqref{prop:cutting_of_weighted_norm:def_of_Y} need not be an \riNorm{} for a general weight $w\in\Mpl(0, \infty)$. Nevertheless, the functional is clearly rearrangement invariant, and so we can define the corresponding fundamental function in the obvious way. However, this function need not have the pleasant properties that the fundamental function of an \riSp{} has.
\end{remark}

\begin{proposition}\label{prop:ac_near_infinity_LZ_examples}
Let $p,q,r,s\in[1, \infty]$ and $\A = (\alpha_0, \alpha_\infty)$, $\B = (\beta_0, \beta_\infty)\in\R^2$. Assume that the parameters are such that $L^{p,q,\A}(\rn)$ and $L^{r,s,\B}(\rn)$ are (equivalent to) \riSps{} (recall~\eqref{EQ:LZ_equiv_RI}). Then
\begin{equation}\label{prop:ac_near_infinity_LZ_examples:eq1}
\lim_{a\to\infty}\sup_{\|f\|_{\LZ{p}{q}{\A}}\leq1} \|f^*\chi_{(a, \infty)}\|_{\LZ{r}{s}{\B}} = 0
\end{equation}
if and only if one of the conditions \ref{enum:thm:compactness_entire_space_LZ_examples_AC_near_infinity:1}--\ref{enum:thm:compactness_entire_space_LZ_examples_AC_near_infinity:5} from~\cref{thm:compactness_entire_space_LZ_examples} is satisfied.
\end{proposition}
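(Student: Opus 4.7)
The plan is to reduce the left-hand side of \eqref{prop:ac_near_infinity_LZ_examples:eq1} to explicit estimates involving power-logarithm weights via \cref{prop:cutting_of_weighted_norm}, and then to perform a case analysis on the parameters. First I would apply \cref{prop:cutting_of_weighted_norm} with $Z(0,\infty) = L^s(0,\infty)$ and weight $w(t) = t^{1/r-1/s}\ell(t)^{\B}$, exploiting the identity
\begin{equation*}
\|g\|_{L^{r,s,\B}(0,\infty)} = \bigl\| g^*(t)\, w(t) \bigr\|_{L^s(0,\infty)}.
\end{equation*}
This splits the supremum into the fundamental-function ratio $\varphi_{L^{r,s,\B}}(a)/\varphi_{L^{p,q,\A}}(a)$ and the restricted weighted term $\sup_{\|f\|_{L^{p,q,\A}}\leq 1}\|f^*(t)w(t)\chi_{(a,\infty)}(t)\|_{L^s}$. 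An explicit computation shows that, as $a\to\infty$, $\varphi_{L^{p,q,\A}}(a) \asymp a^{1/p}\ell(a)^{\alpha_\infty}$ if $p<\infty$; $\varphi_{L^{\infty,q,\A}}(a) \asymp \ell(a)^{\alpha_\infty+1/q}$ if $p=\infty$ and $q<\infty$ (noting that necessarily $\alpha_\infty + 1/q \leq 0$); and $\varphi_{L^{\infty,\infty,\A}}(a) \asymp 1$. Matching the ratio against these asymptotics produces exactly the strict inequalities hidden inside each of (C1)--(C5).

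For sufficiency, the restricted weighted term is controlled by extracting from $f^*$ the inner weight $t^{1/p-1/q}\ell(t)^{\A}$ built into the $X$-norm. When $s\leq q$, Hölder's inequality in $L^q$ with conjugate exponent $r_0$ satisfying $1/r_0 = 1/s - 1/q$ reduces the task to the vanishing as $a\to\infty$ of
\begin{equation*}
\biggl(\int_a^\infty t^{r_0(1/r - 1/p) - 1}\ell(t)^{r_0(\B - \A)}\,dt\biggr)^{1/r_0},
\end{equation*}
which is precisely the $\beta_\infty + 1/s - 1/q < \alpha_\infty$ side of the dichotomy in (C2). When $s > q$ Hölder is unavailable, but the pointwise bound $f^*(t) \leq f^*(a) \leq \|f\|_{L^{p,q,\A}}/\varphi_{L^{p,q,\A}}(a)$ on $(a,\infty)$ reduces the problem to the $L^s$-integrability of $w\chi_{(a,\infty)}$ divided by $\varphi_{L^{p,q,\A}}(a)$, giving the $\beta_\infty < \alpha_\infty$ side of (C2). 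The $p=r=\infty$ subcases (C3)--(C5) are handled in the same spirit, with powers of $t$ replaced by powers of $\ell(t)$ and the dichotomy shifted accordingly.

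For necessity I would construct concrete test functions concentrated past $a$. When $p=r<\infty$ a natural choice is $f_a^*(t) = t^{-1/p}\ell(t)^{-\alpha_\infty}\chi_{(a,Ka)}(t)$ for a suitable $K>1$; then $\|f_a\|_{L^{p,q,\A}}$ stays bounded below by a positive constant independent of $a$, while $\|f_a^*\|_{L^{r,s,\B}}$ reduces to a pure logarithmic expression in $\alpha_\infty - \beta_\infty$ and $1/q - 1/s$ that vanishes as $a\to\infty$ only under the strict inequality in (C2). When $p=r=\infty$, purely logarithmic profiles such as $f_a^*(t) = \ell(t)^{-\alpha_\infty - 1/q}\chi_{(a,a^k)}(t)$ for a large integer $k$ do the job. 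The main obstacle is the delicate borderline regime (C3)--(C5), where the transitional value $\alpha_\infty + 1/q = 0$ forces careful bookkeeping of several logarithmic layers, and the strict inequality $q<s$ in (C5) reflects the usual Lorentz-type improvement at the exact matching point $\alpha_\infty + 1/q = \beta_\infty + 1/s = 0$ of the weights.
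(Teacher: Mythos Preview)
Your overall plan via \cref{prop:cutting_of_weighted_norm} matches the paper's, but there is a genuine gap in the sufficiency argument when $s>q$. The flat bound $f^*(t)\le \|f\|_X/\varphi_X(a)$ forces you to control $\|w\chi_{(a,\infty)}\|_{L^s}$, and this quantity is infinite whenever $s\ge r$; for instance, with $p=r=2$, $q=1$, $s=3$ one has $\int_a^\infty t^{s/r-1}\ell(t)^{s\beta_\infty}\,dt=\int_a^\infty t^{1/2}\ell(t)^{3\beta_\infty}\,dt=\infty$, and the estimate collapses regardless of $\alpha_\infty,\beta_\infty$. The paper instead uses the sharper pointwise decay $f^*(t)\lesssim t^{-1/p}\ell(t)^{-\alpha_\infty}\|f\|_{L^{p,\infty,\A}}$ coming from $L^{p,q,\A}\hookrightarrow L^{p,\infty,\A}$ (for (C1)), or first embeds $L^{p,q,\A}\hookrightarrow L^{p,s,(\gamma_0,\gamma_\infty)}$ to match the outer index $s$ and then pulls out $\sup_{t>a}\ell(t)^{\beta_\infty-\gamma_\infty}$ (for (C2)--(C4)); case (C5) genuinely requires a double-logarithm bound. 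Note also that your asymptotic $\varphi_{L^{\infty,q,\A}}(a)\asymp\ell(a)^{\alpha_\infty+1/q}$ is incorrect as stated: there is no constraint on $\alpha_\infty$, and the fundamental function behaves like $\ell(a)^{\alpha_\infty+1/q}$, $\ell\ell(a)^{1/q}$, or a constant according as $\alpha_\infty+1/q$ is positive, zero, or negative---precisely the trichotomy behind (C3)--(C5).

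The necessity side also has a hole. In the subcase $p=r<\infty$, $q>s$, $\beta_\infty<\alpha_\infty\le\beta_\infty+1/s-1/q$, the fundamental-function ratio \emph{does} vanish, so test functions are mandatory. But your choice $t^{-1/p}\ell(t)^{-\alpha_\infty}\chi_{(a,Ka)}$ with fixed $K$ gives $\|f_a\|_X\asymp 1$ while $\|(f_a)^*\chi_{(a,\infty)}\|_Y\asymp\ell(a)^{\beta_\infty-\alpha_\infty}\to 0$ (both factors in the integrand are $\asymp a^{-1/p}\ell(a)^{\cdots}$ on an interval of length $\asymp a$), so it fails to witness a positive limit. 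The paper handles $q=\infty$ with the full extremal $f_0(t)=t^{-1/p}\ell(t)^{-\alpha_\infty}$, and for $q<\infty$ computes the supremum exactly via Sawyer's two-weight characterization of weighted Lebesgue inequalities on the cone of nonincreasing functions, which is what produces the extra $1/s-1/q$.
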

\begin{proof}
Throughout the entire proof, we use the notation $\lesssim$ or $\gtrsim$ meaning that the left-hand side is bounded from above or below, respectively, by a positive constant multiple of the right-hand side with the multiplicative constant independent of $a\in(0, \infty)$ and $f$. We will also use $\approx$, meaning that both inequalities $\lesssim$ and $\gtrsim$ are true.

By \cref{prop:cutting_of_weighted_norm}, in order to show that \eqref{prop:ac_near_infinity_LZ_examples:eq1} is true, it is sufficient to show that
\begin{align}
&\lim_{a\to\infty} \frac{\fundX[{\LZ[]{r}{s}{\B}}](a)}{\fundX[{\LZ[]{p}{q}{\A}}](a)} = 0 \label{prop:ac_near_infinity_LZ_examples:eq12} \\
\intertext{and}
&\lim_{a\to\infty} \sup_{\|f\|_{\LZ{p}{q}{\A}}\leq1} \|t^{\frac1{r} - \frac1{s}} \ell(t)^{\beta_\infty} f^*(t)\|_{L^s(a, \infty)} = 0. \label{prop:ac_near_infinity_LZ_examples:eq13}
\end{align}
It is not hard to verify that each of the conditions~\ref{enum:thm:compactness_entire_space_LZ_examples_AC_near_infinity:1}--\ref{enum:thm:compactness_entire_space_LZ_examples_AC_near_infinity:5} implies the validity of~\eqref{prop:ac_near_infinity_LZ_examples:eq12} (see also~\cite[Lemma~3.7]{OP:99}). We now show that they also imply the validity of~\eqref{prop:ac_near_infinity_LZ_examples:eq13}.

First, assume that $p < r \leq \infty$, that is, we assume~\ref{enum:thm:compactness_entire_space_LZ_examples_AC_near_infinity:1}. Since $\LZ{p}{q}{\A} \hookrightarrow \LZ{p}{\infty}{\A}$ (see~\cite[Theorem~4.1]{OP:99}), in order to establish \eqref{prop:ac_near_infinity_LZ_examples:eq13}, it is sufficient to show that
\begin{equation*}
\lim_{a\to\infty}\sup_{\|f\|_{\LZ{p}{\infty}{\A}}\leq1} \|t^{\frac1{r} - \frac1{s}} \ell(t)^{\beta_\infty} f^*(t)\|_{L^s(a, \infty)} = 0.
\end{equation*}
To this end, we have
\begin{equation}\label{prop:ac_near_infinity_LZ_examples:eq3}
\|t^{\frac1{r} - \frac1{s}} \ell(t)^{\beta_\infty} f^*(t)\|_{L^s(a, \infty)} \leq \|f\|_{\LZ{p}{\infty}{\A}} \|t^{\frac1{r} - \frac1{p} - \frac1{s}} \ell(t)^{\beta_\infty - \alpha_\infty}\|_{L^s(a, \infty)}
\end{equation}
for every $f\in\M(0, \infty)$. We now distinguish between whether $s< \infty$ or $s=\infty$. Assume that $s<\infty$. Note that
\begin{equation*}
	\|t^{\frac1{r} - \frac1{p} - \frac1{s}} \ell(t)^{\beta_\infty - \alpha_\infty}\|_{L^s(1, \infty)} < \infty
\end{equation*}
thanks to the fact that $(1/r-1/p)s < 0$. Therefore, using \eqref{prop:ac_near_infinity_LZ_examples:eq3} and the dominated convergence theorem, we obtain the validity of \eqref{prop:ac_near_infinity_LZ_examples:eq13}. Now, assume that $s=\infty$. Note that the function $(0, \infty)\ni \tau \mapsto \tau^{\frac1{r} - \frac1{p}} \ell(\tau)^{\beta_\infty - \alpha_\infty}$ is equivalent to a decreasing function thanks to the fact that $1/r-1/p<0$. Hence
\begin{equation*}
\|t^{\frac1{r} - \frac1{p}} \ell(t)^{\beta_\infty - \alpha_\infty}\|_{L^\infty(a, \infty)} \approx a^{\frac1{r} - \frac1{p}} \ell(a)^{\beta_\infty - \alpha_\infty} \to 0 \qquad \text{as $a\to\infty$}.
\end{equation*}
Therefore, combining this with \eqref{prop:ac_near_infinity_LZ_examples:eq3}, we obtain the validity of \eqref{prop:ac_near_infinity_LZ_examples:eq13} again.

Second, assume one of the cases~\ref{enum:thm:compactness_entire_space_LZ_examples_AC_near_infinity:2}--\ref{enum:thm:compactness_entire_space_LZ_examples_AC_near_infinity:4}. By~\cite[Theorems~4.1 and 4.3]{OP:99} when $p=r<\infty$ and by~\cite[Theorems~4.2 and 4.4]{OP:99} when $p=r=\infty$, we have
\begin{equation}\label{prop:ac_near_infinity_LZ_examples:eq4}
\LZ{p}{q}{\A} \hookrightarrow \LZ{p}{s}{(\gamma_0, \gamma_\infty)},
\end{equation}
where
\begin{equation*}
(\gamma_0, \gamma_\infty) = \begin{cases}
( \alpha_0, \alpha_\infty ) \quad &\text{if $p=r<\infty$ and $q\leq s$},\\[3pt]
\big( \alpha_0 + 1/q - 1/s - \varepsilon, \alpha_\infty + 1/q - 1/s - \varepsilon \big) \quad &\text{if either $p=r<\infty$ and $q > s$},\\[3pt]
&\text{\hphantom{if }or $p=r = \infty$},
\end{cases} 
\end{equation*}
where, in the second case, $\varepsilon>0$ is (arbitrarily) chosen to be so small that we still have
\begin{equation*}
\beta_\infty + \frac1{s} < \alpha_\infty + \frac1{q} - \varepsilon.
\end{equation*}
Note that, with such a choice of $\varepsilon>0$, we also have
\begin{equation}\label{prop:ac_near_infinity_LZ_examples:eq5}
\beta_\infty - \gamma_\infty < 0
\end{equation}
in each of the cases~\ref{enum:thm:compactness_entire_space_LZ_examples_AC_near_infinity:2}--\ref{enum:thm:compactness_entire_space_LZ_examples_AC_near_infinity:4}. Hence, it follows that
\begin{equation}\label{prop:ac_near_infinity_LZ_examples:eq6}
\lim_{a\to\infty}\ell(a)^{\beta_\infty-\gamma_\infty} = 0.
\end{equation}
Now, thanks to the embedding \eqref{prop:ac_near_infinity_LZ_examples:eq4}, in order to show that \eqref{prop:ac_near_infinity_LZ_examples:eq13} is true, it is sufficient to show that
\begin{equation}\label{prop:ac_near_infinity_LZ_examples:eq7}
\lim_{a\to\infty} \sup_{\|f\|_{\LZ{p}{s}{(\gamma_0, \gamma_\infty)}}\leq1} \|t^{\frac1{p} - \frac1{s}} \ell(t)^{\beta_\infty} f^*(t)\|_{L^s(a, \infty)} = 0.
\end{equation}
To this end, using both \eqref{prop:ac_near_infinity_LZ_examples:eq5} and \eqref{prop:ac_near_infinity_LZ_examples:eq6}, we obtain
\begin{align*}
&\lim_{a\to \infty} \sup_{\|f\|_{\LZ{p}{s}{(\gamma_0, \gamma_\infty)}}\leq1} \|t^{\frac1{p} - \frac1{s}} \ell(t)^{\beta_\infty} f^*(t)\|_{L^s(a, \infty)} \\
&\leq \lim_{a\to \infty} \Big( \sup_{\tau\in(a, \infty)}\ell(\tau)^{\beta_\infty-\gamma_\infty} \sup_{\|f\|_{\LZ{p}{s}{(\gamma_0, \gamma_\infty)}}\leq1} \|t^{\frac1{p} - \frac1{s}} \ell(t)^{\gamma_\infty} f^*(t)\|_{L^s(a, \infty)} \Big) \\
&\leq \lim_{a\to \infty} \ell(a)^{\beta_\infty-\gamma_\infty} = 0.
\end{align*}
It follows that \eqref{prop:ac_near_infinity_LZ_examples:eq7} is true. Hence, so is \eqref{prop:ac_near_infinity_LZ_examples:eq13} in turn.

Next, assume~\ref{enum:thm:compactness_entire_space_LZ_examples_AC_near_infinity:5}, that is, $p = r = \infty$, $q < s$, and $\alpha_\infty + 1/q = 0$. Since $q \alpha_\infty = -1$ and $q\alpha_0 < - 1$ (recall~\eqref{EQ:LZ_equiv_RI}), we have
\begin{equation*}
\int_0^t \tau^{-1} \ell^{q\A}(\tau) \dd{\tau} \approx \ell\ell(t) \quad \text{for every $t\geq1$}.
\end{equation*}
Using this, we see that
\begin{align*}
f^*(t)^q &\approx \ell\ell(t)^{-1} f^*(t)^q \int_0^t \tau^{-1} \ell^{q\A}(\tau) \dd{\tau} \\
&\leq \ell\ell(t)^{-1} \int_0^t \tau^{-1} \ell^{q\A}(\tau) f^*(\tau)^q \dd{\tau} \\
&\leq \ell\ell(t)^{-1} \|f\|_{\LZ{\infty}{q}{\A}}^q
\end{align*}
for all $t\geq1$ and $f\in\M(0, \infty)$. Hence, using this, we obtain
\begin{equation*}
\lim_{a\to\infty} \sup_{\|f\|_{\LZ{\infty}{q}{\A}}\leq1} \|t^{- \frac1{s}} \ell(t)^{\beta_\infty} f^*(t)\|_{L^s(a, \infty)} \leq \lim_{a\to\infty} \|t^{- \frac1{s}} \ell(t)^{\beta_\infty} \ell\ell(t)^{-\frac1{q}}\|_{L^s(a, \infty)}.
\end{equation*}
Since $\beta_\infty + 1/s = 0$ and $1\leq q < s \leq \infty$, it is easy to see that
\begin{equation*}
\|t^{- \frac1{s}} \ell(t)^{\beta_\infty} \ell\ell(t)^{-\frac1{q}}\|_{L^s(a, \infty)} \approx \ell\ell(a)^{\frac1{s} - \frac1{q}} \quad \text{for every $a\geq1$}.
\end{equation*}
Hence, it follows that \eqref{prop:ac_near_infinity_LZ_examples:eq13} is also true when \ref{enum:thm:compactness_entire_space_LZ_examples_AC_near_infinity:5} is the case.

Finally, we will show that the conditions \ref{enum:thm:compactness_entire_space_LZ_examples_AC_near_infinity:1}--\ref{enum:thm:compactness_entire_space_LZ_examples_AC_near_infinity:5} are also necessary for the validity of \eqref{prop:ac_near_infinity_LZ_examples:eq1}. When
\begin{align*}
&\text{$p>r$,} \\
&\text{$p=r<\infty$ and $\beta_\infty \geq \alpha_\infty$}, \\
&\text{$p=r=\infty$ and $0 < \alpha_\infty + \frac1{q} \leq \beta_\infty + \frac1{s}$}, \\
&\text{$p=r=\infty$ and $0 = \alpha_\infty + \frac1{q} < \beta_\infty + \frac1{s}$}, \\
&\text{$p=r=\infty$, $q<\infty$, and $\alpha_\infty + \frac1{q} < 0$}, \\
&\text{$p=r=\infty$, $q=\infty$, and $\alpha_\infty \leq 0$}, \\
\intertext{or}
&\text{$p=r=\infty$, $s\leq q$, and $\alpha_\infty + \frac1{q} = \beta_\infty + \frac1{s} = 0$},
\end{align*}
it can be verified by straightforward computations (see also~\cite[Lemma~3.7]{OP:99}) that
\begin{equation*}
\lim_{a\to\infty} \frac{\fundX[{\LZ[]{r}{s}{\B}}](a)}{\fundX[{\LZ[]{p}{q}{\A}}](a)} > 0.
\end{equation*}
Hence, it follows from \cref{prop:decay_at_infinity_implies_decay_of_fund_func} that \eqref{prop:ac_near_infinity_LZ_examples:eq1} is not valid when one of these conditions is satisfied. Furthemore, comparing them with \ref{enum:thm:compactness_entire_space_LZ_examples_AC_near_infinity:1}--\ref{enum:thm:compactness_entire_space_LZ_examples_AC_near_infinity:5}, we see that it only remains for us to show that if
\begin{equation}\label{prop:ac_near_infinity_LZ_examples:eq8}
\text{$p=r<\infty$, $q>s$, and $\beta_\infty< \alpha_\infty \leq \beta_{\infty} + \frac1{s} - \frac1{q}$},
\end{equation}
then \eqref{prop:ac_near_infinity_LZ_examples:eq1} is not valid. For future reference,  note that
\begin{equation}\label{prop:ac_near_infinity_LZ_examples:eq9}
\lim_{t\to\infty} \frac{t+a}{t} = \lim_{t\to\infty} \frac{\ell(t+a)}{\ell(t)} = 1 \quad \text{for every $a>0$}.
\end{equation}
First, assume \eqref{prop:ac_near_infinity_LZ_examples:eq8} and that $q = \infty$. Set
\begin{equation*}
f_0(t) = t^{-\frac1{p}} \ell^{-\A}(t),\ t\in(0, \infty).
\end{equation*}
Since $p<\infty$, it is easy to see that $f_0^*(t)\approx f_0(t)$ for every $t\in(0, \infty)$. Furthermore, note that $\|f_0\|_{\LZ{p}{q}{\A}}\approx 1$. It follows from \eqref{prop:ac_near_infinity_LZ_examples:eq9} that
for each $a>0$ there is $t_a\geq1$ such that
\begin{equation*}
(t+a)^{-\frac1{p}} \ell^{-\A}(t+a) \geq \frac1{2}t^{-\frac1{p}} \ell^{-\A}(t) \quad \text{for every $t\geq t_a$}.
\end{equation*}
Hence, for every $a>0$, we have
\begin{align*}
\sup_{\|f\|_{\LZ{p}{q}{\A}}\leq1} \|f^*\chi_{(a, \infty)}\|_{\LZ{r}{s}{\B}} &\gtrsim \|f_0\chi_{(a, \infty)}\|_{\LZ{r}{s}{\B}} \\
&\approx \|t^{\frac1{r} - \frac1{s}} \ell^\B(t) (t+a)^{-\frac1{p}} \ell^{-\A}(t+a)\|_{L^s(0, \infty)} \\
&\gtrsim \|t^{- \frac1{s}} \ell^\B(t) \ell^{-\A}(t)\|_{L^s(t_a, \infty)} \\
&= \|t^{- \frac1{s}} \ell(t)^{\beta_\infty - \alpha_\infty}\|_{L^s(t_a, \infty)} \\
&= \infty
\end{align*}
thanks to the fact that $\beta_\infty + 1/s \geq \alpha_\infty$. Therefore, \eqref{prop:ac_near_infinity_LZ_examples:eq1} is not valid.

At last, assume \eqref{prop:ac_near_infinity_LZ_examples:eq8} and that $q< \infty$. To this end, note that
\begin{align}
\sup_{\|f\|_{\LZ{p}{q}{\A}}\leq1} \|f^*\chi_{(a, \infty)}\|_{\LZ{r}{s}{\B}} &= \sup_{\|f\|_{\LZ{p}{q}{\A}}\leq1} \|t^{\frac1{r} - \frac1{s}} \ell^\B(t)f^*(t+a)\|_{L^s(0 ,\infty)} \nonumber\\
&= \sup_{\|f\|_{\LZ{p}{q}{\A}}\leq1} \|(t-a)^{\frac1{r} - \frac1{s}} \ell^\B(t-a)f^*(t)\|_{L^s(a ,\infty)}. \label{prop:ac_near_infinity_LZ_examples:eq10}
\end{align}
By \cite[Theorem~1]{S:90}, we have
\begin{align}
&\sup_{\|f\|_{\LZ{p}{q}{\A}}\leq1} \|(t-a)^{\frac1{r} - \frac1{s}} \ell^\B(t-a)f^*(t)\|_{L^s(a ,\infty)} \approx \nonumber\\
&\Bigg( \int_0^\infty \Bigg( \int_t^\infty \frac{(\tau - a)^{\frac{s}{p} - 1} \ell^{s\B}(\tau - a)}{\tau^{\frac{q}{p}} \ell^{q\A}(\tau)} \chi_{(a, \infty)}(\tau) \dd{\tau} \Bigg)^{\frac{q}{q-s}} t^{\frac{q}{p} - 1} \ell^{q\A}(t) \dd{t} \Bigg)^\frac{q-s}{qs}. \label{prop:ac_near_infinity_LZ_examples:eq11}
\end{align}
Using \eqref{prop:ac_near_infinity_LZ_examples:eq9}, we see that for each $a>0$ there is $t_a\geq a$ such that
\begin{equation*}
(t - a)^{\frac{s}{p} - 1} \ell^{s\B}(t - a) \geq \frac1{2} t^{\frac{s}{p} - 1} \ell^{s\B}(t) \quad \text{for every $t\geq t_a$}.
\end{equation*}
Therefore, for each $a\geq1$ and every $t\geq t_a\geq a$, we have
\begin{align*}
\int_t^\infty \frac{(\tau - a)^{\frac{s}{p} - 1} \ell^{s\B}(\tau - a)}{\tau^{\frac{q}{p}} \ell^{q\A}(\tau)} \chi_{(a, \infty)}(\tau) \dd{\tau} &=
\int_t^\infty \frac{(\tau - a)^{\frac{s}{p} - 1} \ell(\tau - a)^{s\beta_\infty}}{\tau^{\frac{q}{p}} \ell(\tau)^{q\alpha_\infty}} \dd{\tau} \\
&\gtrsim \int_t^\infty \tau^{\frac{s}{p} - \frac{q}{p} - 1} \ell(\tau)^{s\beta_\infty - q\alpha_\infty} \dd{\tau} \\
&\approx t^{\frac{s}{p} - \frac{q}{p}} \ell(t)^{s\beta_\infty - q\alpha_\infty}
\end{align*}
thanks to the fact that $s/p - q/p < 0$. Furthermore, we have
\begin{equation*}
\ell(t)^{s\beta_\infty - q\alpha_\infty} \geq \ell(t)^{(s-q)\alpha_\infty + \frac{s-q}{q}} \quad\text{for every $t\geq1$}
\end{equation*}
thanks to the last inequality in~\eqref{prop:ac_near_infinity_LZ_examples:eq8}. Hence, for each $a\geq1$, we obtain
\begin{align*}
&\int_0^\infty \Bigg( \int_t^\infty \frac{(\tau - a)^{\frac{s}{p} - 1} \ell^{s\B}(\tau - a)}{\tau^{\frac{q}{p}} \ell^{q\A}(\tau)} \chi_{(a, \infty)}(\tau) \dd{\tau} \Bigg)^{\frac{q}{q-s}} t^{\frac{q}{p} - 1} \ell^{q\A}(t) \dd{t} \\
&\gtrsim \int_{t_a}^\infty \big( t^{\frac{s}{p} - \frac{q}{p}} \ell(t)^{s\beta_\infty - q\alpha_\infty} \big)^{\frac{q}{q-s}} t^{\frac{q}{p} - 1} \ell(t)^{q\alpha_\infty} \dd{t} \\
&\geq \int_{t_a}^\infty t^{-1} \ell(t)^{-1} \dd{t} = \infty.
\end{align*}
It follows from this combined with \eqref{prop:ac_near_infinity_LZ_examples:eq10} and \eqref{prop:ac_near_infinity_LZ_examples:eq11} that \eqref{prop:ac_near_infinity_LZ_examples:eq1} is not valid, which finishes the proof.
\end{proof}

\begin{remark}\label{rem:decay_of_fund_not_imply_ac_near_infinity_LZ_example}
Let $p\in(1, \infty)$, $1\leq s < q\leq \infty$, and $\A=(\alpha_0, \alpha_\infty)$, $\B=(\beta_0, \beta_\infty)\in\R^2$. Whereas
\begin{equation*}
\lim_{a\to\infty} \frac{\fundX[{\LZ[]{p}{s}{\B}}](a)}{\fundX[{\LZ[]{p}{q}{\A}}](a)} = 0
\end{equation*}
is true if and only if $\beta_\infty < \alpha_\infty$,
\begin{equation*}
\lim_{a\to\infty}\sup_{\|f\|_{\LZ{p}{q}{\A}}\leq1} \|f^*\chi_{(a, \infty)}\|_{\LZ{p}{s}{\B}} = 0
\end{equation*}
is true if and only if $\beta_\infty < \alpha_\infty + 1/q - 1/s$. Hence, $X=L^{p,q,\A}$ and $Y=L^{p,s,\B}$ with $\beta_\infty\in[\alpha_\infty + 1/q - 1/s, \alpha_\infty)$ is a concrete example of \riSps{} for which the decay of the ratio $\fundX[Y]/\fundX$ is not sufficient for the validity of \eqref{thm:characterization_of_comp_emb_Y:eq:globally_ac}.
\end{remark}

As a special case of \cref{thm:compactness_entire_space_LZ_examples}, we obtain a complete characterization of the compactness of \eqref{thm:characterization_of_comp_emb_Y:eq:embedding} with $X$ and $Y$ being ``logarithmic Orlicz spaces'', which are sometimes called Zygmund spaces. In fact, with a bit of extra effort, we can obtain the characterization even when the Young functions generating these spaces have different behavior near 0  and near $\infty$. We omit its proof because it is essentially the same as that of~\cref{thm:compactness_entire_space_LZ_examples}. Recall that, loosely speaking, the behavior of a Young function near $\infty$ influences the ``local part'' of the corresponding Orlicz space, whereas its behavior near $0$ influences the ``global part'' of the Orlicz space.
\begin{theorem}\label{thm:compactness_entire_space_LLogL_examples}
Let $m,n\in\N$, $n\geq2$. Let $p_1,p_2,r_1,r_2\in[1, \infty)$ and $\gamma_1,\gamma_2, \delta_1,\delta_2\in\R$. Assume that if $p_1 = 1$ or $r_1 = 1$, then $\gamma_1\leq0$ or $\delta_1\leq0$; and if $p_2 = 1$ or $r_2 = 1$, then $\gamma_2\geq0$ or $\delta_2 \geq 0$, respectively. Let $A$ and $B$ be Young functions that are equivalent to the functions
\begin{align*}
t\mapsto\begin{cases}
t^{p_1} \ell(t)^{\gamma_1} \quad \text{near $0$},\\
t^{p_2} \ell(t)^{\gamma_2} \quad \text{near $\infty$},
\end{cases}
\intertext{and}
t\mapsto\begin{cases}
t^{r_1} \ell(t)^{\delta_1} \quad \text{near $0$},\\
t^{r_2} \ell(t)^{\delta_2} \quad \text{near $\infty$},
\end{cases}
\end{align*}
respectively.

When $m<n$, the Sobolev embedding 
\begin{equation}\label{thm:compactness_entire_space_LLogL_examples:Sob_emb}
\wmrX[m][L^{A}] \hookrightarrow L^{B}(\rn)
\end{equation}
is compact if and only if one of the conditions
\begin{enumerate}[label=(D\arabic*), ref=(D\arabic*)]
	\item\label{enum:compactness_entire_space_LLogL_examples_globally_AC:1} $p_1 < r_1$;
	\item\label{enum:compactness_entire_space_LLogL_examples_globally_AC:2} $p_1 = r_1$ and $\delta_1 < \gamma_1$
\end{enumerate}
is satisfied and simultaneously so is one of the following:
\begin{enumerate}[label=(D\arabic*), ref=(D\arabic*), resume]
	\item $p_2 < \frac{n}{m}$ and $r_2 < \frac{np_2}{n-mp_2}$;
	\item $p_2 < \frac{n}{m}$, $r_2 = \frac{np_2}{n-mp_2}$, and $\delta_2 < \frac{n}{n-mp_2} \gamma_2$;
	\item $p_2 \geq \frac{n}{m}$.
\end{enumerate}

When $m \geq n$, the Sobolev embedding \eqref{thm:compactness_entire_space_LLogL_examples:Sob_emb} is compact if and only if one of the conditions \ref{enum:compactness_entire_space_LLogL_examples_globally_AC:1}--\ref{enum:compactness_entire_space_LLogL_examples_globally_AC:2} is satisfied.
\end{theorem}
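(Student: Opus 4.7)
The plan is to mirror the proof of \cref{thm:compactness_entire_space_LZ_examples}: invoke the general characterization \cref{thm:characterization_of_comp_emb_Y} and reduce the two ingredients appearing there---the global condition \eqref{thm:characterization_of_comp_emb_Y:eq:globally_ac} and, for $m<n$, the local Hardy-type condition \eqref{thm:characterization_of_comp_emb_Y_locally_not_Linfty:eq:Hardy_op_compact}---to their Lorentz--Zygmund counterparts, already handled in \cref{thm:compactness_entire_space_LZ_examples} and \cref{prop:ac_near_infinity_LZ_examples}. The structural observation that enables the reduction is a \emph{splitting} of the Orlicz norm according to the split behavior of $A$: using the identity $\|f\|_{L^A(\rn)}=\|f^*\|_{L^A(0,\infty)}$ together with routine Luxemburg estimates, the restriction of $L^A$ to $(0,1)$ is equivalent to the corresponding restriction of the Zygmund space $L^{p_2,p_2,(\gamma_2/p_2,0)}(\rn)$ (only the behavior of $A$ near $\infty$ matters on that piece, where $f^*$ is large), whereas the tail on $(1,\infty)$ is equivalent to that of $L^{p_1,p_1,(0,\gamma_1/p_1)}(\rn)$; analogous statements hold for $L^B$. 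The standing hypotheses on $\gamma_i,\delta_i$ when $p_i$ or $r_i$ equals $1$ are precisely what is needed for these substitute Lorentz--Zygmund functionals to be genuine r.i.\ norms, cf.~\eqref{EQ:LZ_equiv_RI}.

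Once this splitting is in place, since $p_1,p_2,r_1,r_2<\infty$ the fundamental function of $L^B$ vanishes at $0$ and we are in the first case of \cref{thm:characterization_of_comp_emb_Y}. The Hardy condition \eqref{thm:characterization_of_comp_emb_Y_locally_not_Linfty:eq:Hardy_op_compact} depends only on the representations of $X$ and $Y$ on $(0,1)$, hence only on $p_2,\gamma_2,r_2,\delta_2$; substituting $p=q=p_2$, $\alpha_0=\gamma_2/p_2$, and analogously for $Y$, into the local alternatives \ref{enum:thm:compactness_entire_space_LZ_locally_compact:1}--\ref{enum:thm:compactness_entire_space_LZ_locally_compact:4} of \cref{thm:compactness_entire_space_LZ_examples} produces exactly the three local cases listed in the statement (the alternatives involving $p=\infty$ or $r=\infty$ being excluded by our hypothesis $p_i,r_i<\infty$). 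Similarly, the global condition \eqref{thm:characterization_of_comp_emb_Y:eq:globally_ac} is insensitive to the behavior of $A,B$ near $\infty$ on the representation side, hence depends only on $p_1,\gamma_1,r_1,\delta_1$; applying \cref{prop:ac_near_infinity_LZ_examples} with $p=q=p_1$, $\alpha_\infty=\gamma_1/p_1$, and analogously for $Y$, and discarding its cases \ref{enum:thm:compactness_entire_space_LZ_examples_AC_near_infinity:3}--\ref{enum:thm:compactness_entire_space_LZ_examples_AC_near_infinity:5} (which all require $p=\infty$), leaves precisely the two alternatives \ref{enum:compactness_entire_space_LLogL_examples_globally_AC:1} and \ref{enum:compactness_entire_space_LLogL_examples_globally_AC:2}. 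For $m\geq n$ alternative (A2) of \cref{thm:characterization_of_comp_emb_Y} applies, so only the global condition is needed.

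The main obstacle is the splitting step itself: one must verify rigorously that the Hardy-type condition is insensitive to the behavior of $A$ and $B$ near $0$, and that the global condition is insensitive to their behavior near $\infty$. Both facts follow from $\|f\|_{L^A(\rn)}=\|f^*\|_{L^A(0,\infty)}$ and from attentive analysis of the Luxemburg functional restricted to $(0,1)$ and to $(1,\infty)$, but some bookkeeping is required in the borderline cases $p_i=1$ where the Young function structure and the r.i.-norm requirement~\eqref{EQ:LZ_equiv_RI} interact.
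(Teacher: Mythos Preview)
Your proposal is correct and follows precisely the approach the paper itself indicates: the paper omits the proof entirely, stating only that it is ``essentially the same as that of \cref{thm:compactness_entire_space_LZ_examples}'', and you have accurately reconstructed what this means---apply \cref{thm:characterization_of_comp_emb_Y}, identify the local part of $L^A$ with the Lorentz--Zygmund space governed by the large-argument behavior of $A$ (parameters $p_2,\gamma_2$) and the global tail with that governed by the small-argument behavior (parameters $p_1,\gamma_1$), and then read off the conditions from \cref{prop:ac_near_infinity_LZ_examples} and \cref{thm:weighted_Sob_ball_LZ_examples_compactness}. Your identification of the splitting step as the only nontrivial point, and your observation that $r_2<\infty$ forces $\varphi_{L^B}(0^+)=0$ so that only alternative (A) of \cref{thm:characterization_of_comp_emb_Y} is relevant, are both exactly right.
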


\subsection{Weighted Sobolev embeddings on balls}
We conclude this paper by describing the optimal target \riSp{} in the weighted Sobolev inequality on balls \eqref{E:embedding_on_balls_Sob_emb} when $X$ is a Lorentz--Zygmund space, and by characterizing when the weighted Sobolev embedding
\begin{equation}\label{thm:weighted_Sob_ball_LZ_examples:Sob_emb_LZ_both_sides}
\wmrXB[m][L^{p,q,\gamma_1}] \hookrightarrow L^{r,s,\gamma_2}(B_R,\aldx)
\end{equation}
is compact.
\begin{theorem}\label{thm:weighted_Sob_ball_LZ_examples_optimal_spaces}
Let $m,n\in\N$, $m<n$, $R\in(0, \infty)$, and $\alpha\geq0$. Let $p,q\in[1, \infty]$ and $\gamma_1\in\R$ be such that $L^{p,q,\gamma_1}(B_R)$ is (equivalent to) an \riSp{} (recall~\eqref{EQ:LZ_equiv_RI} with $\alpha_0 = \gamma_1$ and $\alpha_\infty$ disregarded).

When $p\in[1, \frac{n}{m})$, the optimal \riSp{} $Y_X(B_R, \aldx)$ for $X = L^{p,q,\gamma_1}(B_R)$ in the embedding
\begin{equation}\label{thm:weighted_Sob_ball_LZ_examples_optimal_spaces:Sob_emb}
\wmrXB[m][L^{p,q,\gamma_1}] \hookrightarrow Y(B_R,\aldx)
\end{equation}
is
\begin{equation*}
Y_X(B_R, \aldx) = L^{\frac{(n+\alpha)p}{n-mp}, q, \gamma_1}(B_R, \aldx).
\end{equation*}

When $p = \frac{n}{m}$, the optimal \riSp{} satisfies
\begin{equation*}
Y_X(B_R, \aldx) = \begin{cases}
L^{\infty, q, \gamma_1 - 1}(B_R, \aldx) \quad &\text{when $\gamma_1 < 1 - \frac1{q}$},\\
L^{\infty, q, -\frac1{q}, -1}(B_R, \aldx) \quad &\text{when $q\in(1, \infty]$ and $\gamma_1 = 1 - \frac1{q}$},\\
L^\infty(B_R, \aldx) \quad &\text{when either $q\in(1, \infty]$ and $\gamma_1 > 1 - \frac1{q}$} \\
\quad &\text{\hphantom{when }or $q = 1$ and $\gamma_1 \geq 0$}.
\end{cases}
\end{equation*}

Finally, when $p > \frac{n}{m}$, we have
\begin{equation*}
Y_X(B_R, \aldx) = L^\infty(B_R, \aldx).
\end{equation*}
\end{theorem}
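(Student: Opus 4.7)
The plan is to apply \cref{cor:embedding_on_balls_optimal_target}, which identifies the optimal target through its associate norm
\begin{equation*}
\|g\|_{\repAsocFin{Y_X}} = \bigl\|t^{(m+\alpha)/n}\, g^{**}(t^{(n+\alpha)/n})\bigr\|_{\repAsocFin{X}}.
\end{equation*}
Since $X = L^{p,q,\gamma_1}(B_R)$, the representation associate $\repAsocFin{X}$ is (equivalent to) $L^{p',q',-\gamma_1}(0,1)$, up to the endpoint adjustments dictated by~\eqref{EQ:LZ_equiv_RI}. The task then reduces to recognizing, in each of the three regimes, which Lorentz--Zygmund space has an associate norm equivalent to the right-hand side above. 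Throughout, I would exploit the fact that $\ell(u^{n/(n+\alpha)}) \approx \ell(u)$ on $(0,1)$, making the natural substitution $u = t^{(n+\alpha)/n}$ clean.

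For the subcritical case $p < n/m$, set $r = (n+\alpha)p/(n-mp)$, so that $r' > 1$. By the classical Hardy inequality for Lorentz--Zygmund spaces, $g^{**}$ may be replaced by $g^{*}$ in the $L^{r',q',-\gamma_1}(0,1)$ norm. The claim $\repAsocFin{Y_X} = L^{r',q',-\gamma_1}(0,1)$ then reduces to the equivalence
\begin{equation*}
\bigl\|t^{(m+\alpha)/n}\, g^{*}(t^{(n+\alpha)/n})\bigr\|_{L^{p',q',-\gamma_1}(0,1)} \approx \bigl\|g^{*}\bigr\|_{L^{r',q',-\gamma_1}(0,1)},
\end{equation*}
which is a direct one-dimensional change-of-variables computation on the nonincreasing function $g^{*}$. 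Alternatively one may bypass duality and work from \cref{thm:embedding_on_balls_characterization}: verify the equivalent Hardy-type inequality
\begin{equation*}
\Bigl\|\int_{t^{n/(n+\alpha)}}^1 f(s)\, s^{-1+m/n}\, ds\Bigr\|_{L^{r,q,\gamma_1}(0,1)} \lesssim \|f\|_{L^{p,q,\gamma_1}(0,1)},
\end{equation*}
which after the substitution $u = t^{n/(n+\alpha)}$ becomes a classical weighted Hardy inequality of Bradley/Muckenhoupt type between Lorentz--Zygmund spaces, and confirm sharpness by testing against the extremal function $f(s) = s^{-1/p}\,\ell(s)^{-\gamma_1 - 1/q}\,\chi_{(0,1/2)}(s)$ to rule out any strictly smaller Lorentz--Zygmund target.

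In the critical case $p = n/m$, the power $t^{(m+\alpha)/n}$ combines with the weight inside $L^{p',q',-\gamma_1}$ so that, after substitution, the resulting one-variable integral is of $L^\infty$ type modulated by a purely logarithmic weight. The three subcases arise according to the convergence of this $\ell$-integral: when $\gamma_1 < 1 - 1/q$, the $\ell$-integral converges and yields the target $L^{\infty,q,\gamma_1 - 1}$; when $\gamma_1 = 1 - 1/q$ with $q > 1$, the integral is borderline logarithmic, so a second iterated logarithm appears to absorb the divergence, giving $L^{\infty,q,-1/q,-1}(B_R, \aldx)$; in the remaining parameter range the Hardy operator takes values in $L^\infty(0,1)$ outright, and $Y_X = L^\infty(B_R, \aldx)$. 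The supercritical case $p > n/m$ is quickest: the standard Lorentz--Zygmund inclusion $L^{p,q,\gamma_1}(0,1) \hookrightarrow L^{n/m,1}(0,1)$ combined with \cref{rem:embedding_on_balls_into_Linfty} gives $Y_X = L^\infty(B_R, \aldx)$.

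The main obstacle will be the careful bookkeeping of (iterated) logarithmic factors in the critical case $p = n/m$, especially in the borderline subcase $\gamma_1 = 1 - 1/q$ where the secondary logarithm must be introduced to cancel a doubly logarithmic divergence. A secondary technical point is handling the endpoint situations of~\eqref{EQ:LZ_equiv_RI} (notably $q \in \{1, \infty\}$ and small/large $\gamma_1$), in which the usual associate-space formulas for Lorentz--Zygmund spaces degenerate and must be treated individually.
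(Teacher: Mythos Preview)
Your proposal is correct and follows essentially the same route as the paper: both start from the associate-norm formula of \cref{cor:embedding_on_balls_optimal_target} and then identify the resulting functional as a Lorentz--Zygmund associate norm via a change of variables and the standard $g^{**}\approx g^{*}$ equivalence. The paper is simply more terse, outsourcing the explicit computations to \cite[Proof of Theorem~5.1]{CP:16}, \cite[Proof of Theorem~5.1]{CM:22}, and the associate-space descriptions in \cite[Section~6]{OP:99}, whereas you sketch those computations inline; your alternative route through \cref{thm:embedding_on_balls_characterization} and weighted Hardy inequalities is also sound but is not what the paper uses.
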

\begin{proof}
First, we use \cref{cor:embedding_on_balls_optimal_target} to obtain a description of the optimal target space $Y_X(B_R, \aldx)$ for $X = L^{p,q,\gamma_1}(B_R)$ in \eqref{thm:weighted_Sob_ball_LZ_examples_optimal_spaces:Sob_emb}. More precisely, we obtain a description of $Y_X'(B_R, \aldx)$. With just minor, obvious changes, we can proceed as in \cite[Proof of Theorem~5.1]{CP:16} or \cite[Proof of Theorem~5.1]{CM:22} to obtain
\begin{equation}\label{thm:weighted_Sob_ball_LZ_examples:optimal_assoc}
Y_X'(B_R, \aldx) = \begin{cases}
L^{\frac{(n+\alpha)p}{(n+\alpha+m)p - n}, q', -\gamma_1}(B_R, \aldx)\quad &\text{if $p\in[1, \frac{n}{m})$}, \\
L^{(1,q',-\gamma_1)}(B_R, \aldx)\quad &\text{if $p=\frac{n}{m}$}, \\
L^1(B_R, \aldx)\quad &\text{if $p>\frac{n}{m}$}.
\end{cases}
\end{equation}
Here, $L^{(1,q',-\gamma_1)}(B_R, \aldx)$ is a Lorentz--Zygmund space defined by means of the functional $\|f\|_{L^{(1,q',-\gamma_1)}(B_R, \aldx)} = \|t^{1-\frac1{q'}}\ell(t)^{-\gamma_1}f^{**}(t)\|_{L^{q'}(0,\aldx(B_R))}$. The desired description of $Y_X(B_R, \aldx)$ then follows from the description of the associate spaces of the Lorentz--Zygmund spaces in \eqref{thm:weighted_Sob_ball_LZ_examples:optimal_assoc} (see~\cite[Section~6]{OP:99}).
\end{proof}

Finally, we characterize when the embedding \eqref{thm:weighted_Sob_ball_LZ_examples:Sob_emb_LZ_both_sides} is compact. In view of~\cref{rem:embedding_on_balls_compactness:m_geq_n_not_interesting}, we only need to focus on the case where $m < n$.
\begin{theorem}\label{thm:weighted_Sob_ball_LZ_examples_compactness}
Let $m,n\in\N$, $m<n$, $R\in(0, \infty)$, and $\alpha\geq0$. Let $p,q,r,s\in[1, \infty]$ and $\gamma_1,\gamma_2\in\R$ be such that $L^{p,q,\gamma_1}(B_R)$ and $L^{r,s,\gamma_2}(B_R, \aldx)$ are (equivalent to) \riSps{}. Then the Sobolev embedding \eqref{thm:weighted_Sob_ball_LZ_examples:Sob_emb_LZ_both_sides} is compact if and only if one of the following is true:
\begin{enumerate}[label=(E\arabic*), ref=(E\arabic*)]
	\item\label{enum:thm:weighted_Sob_ball_LZ_examples_compactness:1} $p\leq \frac{n}{m}$ and $r<\frac{(n+\alpha)p}{n-mp}$;
	\item $p < \frac{n}{m}$, $r = \frac{(n+\alpha)p}{n-mp}$, and $\gamma_2 < \gamma_1 + \min\Big\{ \frac1{q}-\frac1{s},0 \Big\}$;
	\item $p = \frac{n}{m}$, $r = \infty$, $\gamma_1 \leq 1 - \frac1{q}$, and $\gamma_2 < \gamma_1 - 1 +\frac1{q}-\frac1{s}$;
	\item\label{enum:thm:weighted_Sob_ball_LZ_examples_compactness:4} either $p = \frac{n}{m}$ and $\gamma_1 > 1 - \frac1{q}$ or $p > \frac{n}{m}$.
\end{enumerate}
\end{theorem}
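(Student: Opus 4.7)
The plan is to reduce the compactness question to an almost compact embedding between two concrete Lorentz--Zygmund spaces via \cref{thm:embedding_on_balls_compactness_not_Linfty,thm:weighted_Sob_ball_LZ_examples_optimal_spaces}, and then translate that relation into concrete conditions on the parameters $p,q,r,s,\gamma_1,\gamma_2$. We split according to whether the target is or is not locally $L^\infty$. When $Y=L^{r,s,\gamma_2}(B_R,\aldx)$ is not locally $L^\infty$, \cref{thm:embedding_on_balls_compactness_not_Linfty} tells us that \eqref{thm:weighted_Sob_ball_LZ_examples:Sob_emb_LZ_both_sides} is compact if and only if $Y_X(B_R,\aldx)\ac Y(B_R,\aldx)$, where $Y_X$ is the optimal target described in \cref{thm:weighted_Sob_ball_LZ_examples_optimal_spaces}. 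When $Y$ is locally $L^\infty$, the discussion preceding \cref{thm:embedding_on_balls_compactness_not_Linfty} reduces matters to the condition $\repFin{X}\ac L^{n/m,1}(0,1)$, which is purely a statement about $(p,q,\gamma_1)$.

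First I would enumerate the cases produced by \cref{thm:weighted_Sob_ball_LZ_examples_optimal_spaces}. If $p<\tfrac{n}{m}$, then $Y_X = L^{(n+\alpha)p/(n-mp),q,\gamma_1}(B_R,\aldx)$; the embedding reduces to an a.c.\ embedding between two \emph{genuine} Lorentz--Zygmund spaces, and the standard criterion $L^{a_1,b_1,\delta_1}\ac L^{a_2,b_2,\delta_2}$ on a finite measure space (which holds iff $a_1>a_2$, or $a_1=a_2<\infty$ with $\delta_2<\delta_1+\min\{1/b_1-1/b_2,0\}$) gives cases~\ref{enum:thm:weighted_Sob_ball_LZ_examples_compactness:1} and (E2) directly. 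If $p=\tfrac{n}{m}$ and $\gamma_1<1-\tfrac1q$, then $Y_X=L^{\infty,q,\gamma_1-1}(B_R,\aldx)$, which is locally $L^\infty$ exactly when $r=s=\infty$ and $\gamma_2=0$ (a situation ruled out unless $p>n/m$); hence $Y$ must satisfy $\varphi_Y(0^+)=0$, and the a.c.\ embedding between two $L^{\infty,\cdot,\cdot}$-type spaces (plus the subembedding $Y_X\hookrightarrow Y$ forcing $r=\infty$) gives condition (E3). If $p=\tfrac{n}{m}$ with $\gamma_1=1-\tfrac1q$ (and $q>1$), the same recipe runs with $Y_X=L^{\infty,q,-1/q,-1}(B_R,\aldx)$ and forces the sharper iterated-logarithmic condition in (E3); a direct check shows that the threshold $\gamma_2<\gamma_1-1+\tfrac1q-\tfrac1s$ collapses to $\gamma_2<-\tfrac1s$ here, which is the admissibility condition~\eqref{EQ:LZ_equiv_RI} for $Y$ when $r=\infty$. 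Finally, if $p>\tfrac{n}{m}$, or if $p=\tfrac{n}{m}$ with $\gamma_1>1-\tfrac1q$ (or $q=1$, $\gamma_1\geq 0$), then $Y_X=L^\infty$; the embedding $W^m_R L^{p,q,\gamma_1}(B_R)\hookrightarrow L^\infty(B_R,\aldx)$ is already compact by the characterization recalled before \cref{thm:embedding_on_balls_compactness_not_Linfty}, hence its composition with the continuous embedding $L^\infty(B_R,\aldx)\hookrightarrow L^{r,s,\gamma_2}(B_R,\aldx)$ is compact for \emph{any} admissible $(r,s,\gamma_2)$, producing case~\ref{enum:thm:weighted_Sob_ball_LZ_examples_compactness:4}.

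Conversely, to see necessity in each case, I would verify that if the corresponding inequality on the parameters fails, then $Y_X\ac Y$ fails. The key ingredient is, once again, the classical characterization of almost compact embeddings in the Lorentz--Zygmund scale (which, via the equivalent reformulation \eqref{prel:ac_iff_repAC}, reduces to explicit computations with the weights $t^{1/p}\ell(t)^{\gamma}$). The exhaustiveness of \ref{enum:thm:weighted_Sob_ball_LZ_examples_compactness:1}--\ref{enum:thm:weighted_Sob_ball_LZ_examples_compactness:4} under the admissibility constraint \eqref{EQ:LZ_equiv_RI} applied to both $X$ and $Y$ then yields the theorem.

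The main obstacle I expect is the delicate limiting case $p=\tfrac{n}{m}$, $\gamma_1=1-\tfrac1q$, where $Y_X$ acquires the second tier of logarithm: here one must carry out the a.c.\ comparison $L^{\infty,q,-1/q,-1}\ac L^{\infty,s,\gamma_2}$ by hand, since the standard tabulated criteria are stated for single--logarithm Lorentz--Zygmund spaces. Up to this bookkeeping, everything else is an application of results already at our disposal.
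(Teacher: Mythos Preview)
Your approach is correct and essentially identical to the paper's: split according to whether $Y=L^{r,s,\gamma_2}(B_R,\aldx)$ equals $L^\infty(B_R,\aldx)$, in the first case invoke \cref{thm:embedding_on_balls_compactness_not_Linfty} together with \cref{thm:weighted_Sob_ball_LZ_examples_optimal_spaces} to reduce to $Y_X\ac Y$, and in the second case use the characterization \eqref{E:embedding_on_balls_compactness_into_Linfty_char_domain_ac_to_Lorentz}; then read off the parameter conditions from known almost-compact-embedding criteria for Lorentz--Zygmund spaces. The paper does exactly this, only more tersely: it cites a single external result (\cite[Proposition~7.12]{S:15}) that covers all the almost-compact comparisons at once, including the iterated-logarithm case $p=n/m$, $\gamma_1=1-1/q$ that you flag as a potential obstacle, so no separate hand computation is needed there.
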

\begin{proof}
First, assume that $L^{r,s,\gamma_2}(B_R,\aldx)\neq L^\infty(B_R,\aldx)$\textemdash that is, it is not true that $r=s=\infty$ and $\gamma_2=0$. Thanks to \cref{thm:embedding_on_balls_compactness_not_Linfty}, the Sobolev embedding \eqref{thm:weighted_Sob_ball_LZ_examples:Sob_emb_LZ_both_sides} is compact if and only if
\begin{equation}\label{thm:weighted_Sob_ball_LZ_examples:optimal_ac_to_target}
Y_X(B_R, \aldx) \ac L^{r,s,\gamma_2}(B_R,\aldx),
\end{equation}
where $Y_X(B_R, \aldx)$ is the optimal target space in \eqref{thm:weighted_Sob_ball_LZ_examples_optimal_spaces:Sob_emb} from \cref{thm:weighted_Sob_ball_LZ_examples_optimal_spaces}. Now, the validity of almost compact embeddings between two Lorentz--Zygmund spaces is well known (e.g., see~\cite[Proposition~7.12]{S:15}). It follows that \eqref{thm:weighted_Sob_ball_LZ_examples:optimal_ac_to_target} is valid if and only if one of the conditions \ref{enum:thm:weighted_Sob_ball_LZ_examples_compactness:1}--\ref{enum:thm:weighted_Sob_ball_LZ_examples_compactness:4} is true.

Finally, assume that $L^{r,s,\gamma_2}(B_R,\aldx) = L^\infty(B_R,\aldx)$\textemdash that is, $r=s=\infty$ and $\gamma_2=0$. In view of the discussion above \cref{thm:embedding_on_balls_compactness_not_Linfty} (in particular, recall~\eqref{E:embedding_on_balls_compactness_into_Linfty_char_domain_ac_to_Lorentz}), the Sobolev embedding
\begin{equation*}
\wmrXB[m][L^{p,q,\gamma_1}] \hookrightarrow L^\infty(B_R,\aldx)
\end{equation*}
is compact if and only if
\begin{equation*}
L^{p,q,\gamma_1}(0,1) \ac L^{\frac{n}{m},1}(0,1).
\end{equation*}
Straightforward computations show (see~\cite[Proposition~7.12]{S:15} again) that this is the case if and only if \ref{enum:thm:weighted_Sob_ball_LZ_examples_compactness:4} is true, which concludes the proof.
\end{proof}

%%%%%%%%%%%%%%%%%%%%%%%%%%%%%%%%%%%%%%%%%%%%%%%%%%%%%%%%%%%%%%%%%%%%%%%%%%%%%%%%%%%%%%%%%%%%%%%%%%%%%%%%%%%%%%%%%%%%%%%%%%%%%%%%%%%%
\section*{Acknowledgment}
The author would like to thank the referee for their valuable comments.
%%%%%%%%%%%%%%%%%%%%%%%%%%%%%%%%%%%%%%%%%%%%%%%%%%%%%%%%%%%%%%%%%%%%%%%%%%%%%%%%%%%%%%%%%%%%%%%%%%%%%%%%%%%%%%%%%%%%%%%%%%%%%%%%%%%%
%\bibliography{bibliography}

\end{document}